\documentclass[11pt]{article}
\usepackage{amsmath,amssymb,amsthm}
\usepackage{float}
\floatstyle{ruled}
\newfloat{algorithm}{tbp}{loa}
\floatname{algorithm}{Algorithm}
\usepackage{enumitem}
\usepackage[expansion=false]{microtype}
\usepackage{booktabs,array}
\newcolumntype{P}[1]{>{\raggedright\arraybackslash}p{#1}}
\usepackage[numbers,sort&compress]{natbib}
\usepackage[hidelinks]{hyperref}
\usepackage[margin=1in]{geometry}
\usepackage{standalone}
\usepackage{subcaption}
\usepackage{tikz}

\newtheorem{theorem}{Theorem}[section]
\newtheorem{lemma}[theorem]{Lemma}
\newtheorem{corollary}[theorem]{Corollary}
\newtheorem{proposition}[theorem]{Proposition}
\newtheorem{definition}[theorem]{Definition}
\newtheorem{remark}[theorem]{Remark}
\newcommand{\R}{\mathbb{R}}
\newcommand{\Z}{\mathbb{Z}}
\newcommand{\rowsp}{\mathrm{rowspace}}

\newcommand{\calN}{\mathcal{N}}
\newcommand{\calG}{\mathcal{G}}
\newcommand{\calF}{\mathcal{F}}
\newcommand{\Span}{\mathrm{span}}
\newcommand{\rank}{\mathrm{rank}}
\newcommand{\calH}{\mathcal{H}}
\newcommand{\ZR}{\Z^{p}\times\R^{q}}

\title{Curvature batching for integer and\\mixed-integer quadratic programming}
\author{Cinar Ari\thanks{Grado Department of Industrial and Systems Engineering,
Virginia Tech.} \and
Robert Hildebrand\thanks{Grado Department of Industrial and Systems Engineering,
Virginia Tech. Email: \texttt{rhil@vt.edu}}}
\date{}

\begin{document}
\maketitle

\begin{abstract}
We give an explicit algorithm for integer quadratic programming
with running time $(nL)^{O(n^2)}\mathrm{poly}(\varphi)$, where $n$
is the number of variables, $L$ bounds the absolute entries of the
constraint and quadratic matrices, and $\varphi$ is the total input
encoding length. The linear objective and constraint right-hand
sides enter only through $\varphi$.

The algorithm refines Lokshtanov's branching framework. Small integer
kernel directions identify optima near constraint boundaries. When
all chosen basis directions have nonnegative curvature, their gradient
values can be imposed in one batch. The quadratic form then vanishes
on the remaining kernel, and the residual problem is an integer linear
program. Thus determinant growth after the batch does not feed into
further branching. Over a polytope, the method extends to mixed-integer
quadratic programming with $q$ continuous variables in time
$(nL)^{O(n^2(q+1)^2)}\mathrm{poly}(\varphi)$, without a convexity
assumption on the continuous block.

We also decide and certify integer and mixed-integer unboundedness in
$(m+n)^{O(n)}\mathrm{poly}(\varphi)$ time for arbitrary rational
polyhedra with $m$ inequalities. Noncopositivity on polyhedral cones
and integer unboundedness are W[1]-hard in the dimension and admit no
$(m+\varphi)^{o(n)}$ algorithm under ETH, matching the linear order of
dimension in the recession-test exponent. Companion results give a
right-hand-side-independent bound for concave integer minimization
and explicit runtimes for applications of IQP.
\end{abstract}
\section{Introduction}
We study integer quadratic programming (IQP),
\begin{equation}\label{eq:iqp}
 \min\{f(x)=x^TQx+c^Tx:Ax\le b,\ x\in\Z^n\},
\end{equation}
where $Q\in\Z^{n\times n}$ is symmetric, $A\in\Z^{m\times n}$,
$b\in\Z^m$, and $c\in\Z^n$. No definiteness assumption is imposed
on $Q$. Let $L_A=\max(1,\|A\|_{\max})$,
$L_Q=\max(1,\|Q\|_{\max})$, and $L=\max(L_A,L_Q)$, where
$\|\cdot\|_{\max}$ is the largest absolute entry. Write $\varphi$
for the total input encoding length and $\Delta(A)$ for the maximum
absolute square subdeterminant, including the empty determinant $1$.
The dependence on the magnitudes of $b$ and $c$ is therefore
polynomial in their encoding lengths, rather than in their values.

Lokshtanov~\cite{Lokshtanov17} proved that IQP is fixed-parameter
tractable in $(n,L)$, with an unspecified parametric factor; Zemmer
\cite{Zemmer17} independently developed a related branching approach.
We make the running time explicit and replace sequential gradient
branching by one batch on every root-to-leaf path.

\subsection{Results}
\begin{theorem}[Curvature batch bound]\label{thm:main}
IQP~\eqref{eq:iqp} can be solved in time
\[
 (mn\Delta(A)L_A L_Q)^{O(n)}\mathrm{poly}(\varphi)
\]
when $m\ge1$. Removing duplicate coefficient rows gives the bound
\[
 \bigl(n(2L_A+1)^n\Delta(A)L_Q\bigr)^{O(n)}
 \mathrm{poly}(\varphi)
 = (nL)^{O(n^2)}\mathrm{poly}(\varphi).
\]
For systems with no inequalities, replace $m$ by $\max(1,m)$
in the first display.
\end{theorem}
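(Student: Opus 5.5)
The plan is to follow Lokshtanov's framework and use the structure described in the abstract. Every optimum either lies deep inside the feasible region or is close to a facet in a direction we can enumerate; we branch on these directions, and we batch the gradient-value branching so it happens once per path.

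\textbf{Step 1 (proximity/branching on constraints).} Let $x^*$ be an optimal solution and take any nonzero $y\in\Z^n$ with $x^*\pm y$ feasible. Optimality gives $f(x^*\pm y)\ge f(x^*)$. Summing the two inequalities yields $y^TQy\ge 0$, and each one separately yields $|(2Qx^*+c)^Ty|\le y^TQy$. We will choose $y$ from a basis of small integer kernel vectors of the current equality system. By Cramer's rule and Siegel-type bounds, these vectors have entries at most $n\Delta(A)$ (up to poly factors).

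There are then two cases. Either some such $y$ fails to fit, so $x^*$ lies within $\|A\|\cdot\|y\|\le nL_A\cdot n\Delta(A)$ of some facet. We branch on which of the $m$ rows this is and on the slack value, which gives $m\cdot O(n^2 L_A\Delta(A))$ branches. Each branch adds an equality and lowers the dimension by one, so after at most $n$ rounds the branching costs $(mn\Delta(A)L_A)^{O(n)}$.

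Or all $n'$ basis directions $y_1,\dots,y_{n'}$ of the current kernel fit. Then every $y_i$ has $y_i^TQy_i\ge 0$. Moreover, each gradient value $g_i=(2Qx^*+c)^Ty_i$ lies in an interval of length $O(y_i^TQy_i)\le O(n^2 L_Q (n\Delta(A))^2)$.

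\textbf{Step 2 (the batch).} In the second case we guess all $g_i$ simultaneously. This costs $(n\Delta(A)L_Q)^{O(n)}$ branches in a single batch and imposes the linear equalities $2y_i^TQx=g_i-c^Ty_i$. The key claim is that on the resulting affine lattice, $f$ restricted to the remaining kernel is linear.

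To verify this, let $K$ be the kernel of the system. Any direction $z\in K$ satisfies $y_i^TQz=0$ for all $i$. Since the $y_i$ span the previous kernel, which contains $K$, this gives $z^TQz=0$ for every $z\in K$. Hence $f(x_0+z)$ is affine in $z$, and the residual problem is an integer linear program. We solve it by Lenstra/Kannan in $n^{O(n)}\mathrm{poly}(\varphi)$ time.

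The new equalities have coefficients of size $L_Q\cdot n\Delta(A)$, which inflates the determinant. The point of the batch is that this growth never feeds back into further quadratic branching, because the branching stops at this node.

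\textbf{Step 3 (accounting and encoding).} The recursion tree has depth at most $n$ in the facet branching, followed by at most one batch. The leaves therefore number $(mn\Delta(A)L_AL_Q)^{O(n)}$. I must check that the facet branching keeps the subdeterminants controlled. After fixing rows $a_j^Tx=b_j-s_j$, the kernel basis comes from subsystems of the original $A$, so $\Delta(A)$ is still the governing quantity; only the right-hand sides change, and those enter only through $\varphi$. Integer feasibility of each equality system is handled by Hermite normal form in polynomial time.

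For the second display, remove duplicate rows, keeping the tightest $b$. This leaves $m\le(2L_A+1)^n$ rows, and Hadamard gives $\Delta(A)\le (\sqrt n L_A)^n$. Substituting yields $(nL)^{O(n^2)}$. When $m=0$, no facet branching occurs, and the first case never arises, since $\pm y$ always fits.

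\textbf{Main obstacle.} The hardest step will be the Step 1 dichotomy when the problem is unbounded or $x^*$ does not exist. I would first run the recession/unboundedness test to guarantee that a minimizer exists. The other delicate point is ensuring that the small kernel basis is taken relative to the current lattice after equalities are imposed, so that the bound stays in terms of $\Delta(A)$ rather than a growing determinant.
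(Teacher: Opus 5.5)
Your proposal is correct and follows essentially the same route as the paper. The common ingredients are: slack branching on rows outside $\rowsp C$ while $C$ consists only of rows of $A$, so that $\Delta(C)\le\Delta(A)$; a single simultaneous gradient batch, after which $Q$ vanishes on the new kernel because the $y_i$ span the old one; an ILP leaf; and the unboundedness test of Theorem~\ref{thm:unb-main}, whose $(m+n)^{O(n)}\mathrm{poly}(\varphi)$ cost is absorbed. The only differences are cosmetic: you impose all $n'$ gradient rows instead of a maximal subset independent modulo $\rowsp C$ (dependent rows are either implied or empty the branch), and you need only integer vectors spanning $\ker C$ over $\R$, not a lattice basis, for which the adjugate construction gives the bound $\Delta(C)$.
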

The algorithm reports infeasibility or unboundedness when appropriate.
For a finite optimum, it returns an optimal integer point. The refined
bound separates the original constraint geometry from the objective
coefficients. In particular, it yields the following consequence.

\begin{corollary}[Totally unimodular constraints]\label{cor:tu}
If $A$ is totally unimodular, IQP can be solved in
$(\max(1,m)nL_Q)^{O(n)}\mathrm{poly}(\varphi)$ time. After removing
duplicate rows this is
$2^{O(n^2)}(nL_Q)^{O(n)}\mathrm{poly}(\varphi)$.
\end{corollary}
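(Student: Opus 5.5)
The corollary follows by substituting into Theorem~\ref{thm:main}. If $A$ is totally unimodular, every square subdeterminant lies in $\{-1,0,1\}$. Counting the empty determinant, this gives $\Delta(A)=1$. Every entry of $A$ is itself a $1\times 1$ subdeterminant, so $L_A=\max(1,\|A\|_{\max})=1$. Putting these values into the first bound of Theorem~\ref{thm:main}, with $m$ replaced by $\max(1,m)$ to cover systems without inequalities, we get
\[
 (\max(1,m)\,n\cdot 1\cdot 1\cdot L_Q)^{O(n)}\mathrm{poly}(\varphi)
 =(\max(1,m)\,nL_Q)^{O(n)}\mathrm{poly}(\varphi).
\]
That is the first claim.

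For the second claim we remove duplicate rows first. This is done before applying the theorem. Removing duplicates keeps total unimodularity, since we pass to a submatrix, and it keeps the feasible set, after keeping the tightest right-hand side among rows with equal coefficient vectors. The operation is polynomial in $\varphi$. Each remaining row lies in $\{-1,0,1\}^n$, so $m\le 3^n$. Then
\[
 (\max(1,m)\,nL_Q)^{O(n)}\le (3^n nL_Q)^{O(n)} = 3^{O(n^2)}(nL_Q)^{O(n)} = 2^{O(n^2)}(nL_Q)^{O(n)},
\]
and the factor $\mathrm{poly}(\varphi)$ is unchanged. The same bound also follows from the second display of Theorem~\ref{thm:main} with $L_A=\Delta(A)=1$, which gives $(n3^nL_Q)^{O(n)}$.

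There is no real obstacle here. The only point that needs care is the deduplication step. Rows with equal coefficient vectors but different right-hand sides must be merged by keeping the minimum $b_i$, so that the feasible set does not change. Since we only pass to a submatrix, total unimodularity is kept.
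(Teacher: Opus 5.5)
Your proposal is correct and follows the paper's proof. Like the paper, you substitute $L_A=\Delta(A)=1$ into the first bound of Theorem~\ref{thm:main}, then use that after deduplication (keeping the smallest right-hand side) a TU matrix has at most $3^n$ distinct rows, which gives $2^{O(n^2)}(nL_Q)^{O(n)}\mathrm{poly}(\varphi)$.
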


We also consider mixed-integer quadratic programming (MIQP),
\begin{equation}\label{eq:miqp0}
 \min\{x^TQx+c^Tx:Ax\le b,\ x\in\Z^p\times\R^q\},
 \qquad n=p+q.
\end{equation}
Over a polytope, batching solves this problem in
$(nL)^{O(n^2(q+1)^2)}\mathrm{poly}(\varphi)$ time, without a
convexity assumption on any block of $Q$
(Theorem~\ref{thm:m-main}). At $q=0$ it reduces to the pure algorithm.

Unboundedness has a different parameter dependence. For arbitrary
rational polyhedra, we classify pure and mixed instances as infeasible,
unbounded below, or having a finite attained optimum in
$(m+n)^{O(n)}\mathrm{poly}(\varphi)$ time
(Theorem~\ref{thm:unb-main} and Corollary~\ref{cor:m-unb}).
The test has no coefficient-magnitude parameter, so finite-value
optimization remains the unresolved part of polynomial-time IQP in
every fixed dimension. A finite value does not imply a bounded
feasible polyhedron; the MIQP batching result specifically assumes
a polytope.

The recession exponent has matching linear order under ETH:
noncopositivity over polyhedral cones and IQP unboundedness are
W[1]-hard in $n$ and admit no $(m+\varphi)^{o(n)}$ algorithm
(Theorems~\ref{thm:lb-copos} and~\ref{thm:lb-unbounded}). The same
lower bound holds for the threshold problem on bounded IQP
instances (Proposition~\ref{prop:lb-integer}). The continuous
hardness ancestor is Knauer--K\"onig--Werner's norm-maximization
result~\cite{KKW15}. Herrmann~\cite{Herrmann26} proves dimension-parameterized
IQP hardness using a related parabola gadget. Our construction supplies
an explicit continuous gap and integer witnesses for the cone and
unboundedness transfers; bounded IQP hardness alone is not the novelty.

\begin{table}[t]
\centering\small
\renewcommand{\arraystretch}{1.25}
\begin{tabular}{P{0.24\textwidth}P{0.40\textwidth}P{0.26\textwidth}}
\toprule
Problem and domain & Upper bound & Dimension lower bound \\
\midrule
IQP, arbitrary polyhedron &
$(nL)^{O(n^2)}\mathrm{poly}(\varphi)$ &
W[1]-hard; no $(m+\varphi)^{o(n)}$ under ETH, even on polytopes \\
MIQP, polytope &
$(nL)^{O(n^2(q+1)^2)}\mathrm{poly}(\varphi)$ &
Inherits the IQP bound at $q=0$ \\
(M)IQP unboundedness &
$(m+n)^{O(n)}\mathrm{poly}(\varphi)$ &
W[1]-hard; no $(m+\varphi)^{o(n)}$ under ETH \\
Noncopositivity on cones &
$(m+n)^{O(n)}\mathrm{poly}(\varphi)$ &
W[1]-hard; no $(m+\varphi)^{o(n)}$ under ETH \\
Concave integer minimization, polytope &
$((m+n)n\varphi_A)^{O(n)}\mathrm{poly}(\varphi)$ &
Includes bounded concave IQP hardness \\
\bottomrule
\end{tabular}
\caption{Main bounds. Optimization and unboundedness results are proved in
Sections~\ref{sec:batch}--\ref{sec:lb}; the concave bound is
Proposition~\ref{prop:concave}. Here $\varphi_A$ is the maximum row
encoding length, with a lower bound of $1$.}
\label{tab:results}
\end{table}

\subsection{Why batching works}\label{subsec:Approach}
A recursion node carries independent equations $Cx=d$. We compute
integer vectors $y_1,\ldots,y_r$ spanning $V=\ker C$ over $\R$,
with $\|y_i\|_\infty\le\Delta(C)$. These directions preserve the
node equations. If an optimal point $x^*$ cannot move both one step
forward and one step backward along a direction, an original
inequality has bounded slack at $x^*$. Branching on that slack fixes
a nearby level $a_j^Tx=b'_j$ and reduces the dimension.

If both steps are feasible, the identity
\[
 f(x^*\pm y)-f(x^*)
 =\pm(2y^TQx^*+c^Ty)+y^TQy
\]
forces nonnegative curvature and a bounded gradient value. Thus a
negative-curvature basis vector forces a constraint branch. Otherwise,
the branch containing a deep optimum can impose all independent
gradient equations at once. These equations span the image of the
old kernel under $Q$, modulo the existing equations. Consequently
$Q$ vanishes on the new kernel, and the remaining objective is affine.

Each path therefore has at most $n$ constraint steps, one gradient
batch, and an ILP leaf. Before the batch, $C$ contains only original
rows, so $\Delta(C)\le\Delta(A)$. After the batch there is no further
quadratic branching. This removes the repeated determinant squaring
that drives the sequential bound in Appendix~\ref{sec:seq}.
Nonnegative curvature on the chosen basis does not imply positive
semidefiniteness on the whole kernel; the spanning argument is what
eliminates all remaining curvature.

\subsection{Related work and applications}
Integer linear programming is fixed-parameter tractable in the number
of variables, beginning with Lenstra~\cite{Lenstra83} and improved by
Kannan~\cite{Kannan87} and subsequent work
\cite{Dadush12,ReisRothvoss23}. Lenstra-type methods also apply to
convex and quasiconvex polynomial optimization
\cite{Heinz05,KhachiyanPorkolab00,HildebrandKoppe13}. For concave
minimization over a polytope, a minimum is attained at an integer-hull
vertex, permitting enumeration in fixed dimension
\cite{Hartmann89,CookHartmannKannanMcDiarmid92}.

The indefinite case combines these geometries. Del Pia, Dey, and
Molinaro~\cite{DelPiaDeyMolinaro17} proved NP membership for MIQP
in variable dimension, while Del Pia and Weismantel
\cite{DelPiaWeismantel14} established polynomial-time IQP in two
variables. Lokshtanov's $(n,L)$ parameterization allows coefficient
magnitudes to enter the parametric factor; our improvement concerns
that factor and does not resolve polynomial-time finite-value IQP
for every fixed dimension.

Herrmann~\cite{Herrmann26} reduces Independent Set to IQP with a
separable concave objective using $2k$ variables and secants of a
parabola. Adding upper bounds that retain the intended integer
witnesses also gives bounded instances. His dimension-linear,
polynomial-size reduction likewise transfers the usual ETH lower
bound. Section~\ref{sec:lb} develops an arbitrary-set chord gadget
from multicolored $k$-Sum, proves a continuous gap of $1/2$, and
tracks inertia through the cone and unboundedness transfers.

IQP formulations occur in network flow, production planning, and
lattice decoding, and as subroutines in parameterized graph algorithms.
Section~\ref{sec:applications} records the resulting bounds and the
formulation parameters on which they depend. Section~\ref{sec:concave}
gives a separate right-hand-side-independent construction for concave
integer minimization, using proximity and dyadic slack cells.

Section~\ref{sec:prelim} develops the shared branching framework;
Sections~\ref{sec:batch} and~\ref{sec:mixed} prove the pure and mixed
batching bounds. Section~\ref{sec:unbounded} treats unboundedness,
and Section~\ref{sec:lb} gives the lower bounds. The concave result
and applications follow, before the open problems in
Section~\ref{sec:discussion}. Appendix~\ref{sec:seq} contains the
sequential comparison.

\section{Branching framework and small kernel directions}
\label{sec:prelim}\label{sec:gradres}\label{sec:further}
We use the notation of the introduction. A node has the form
\begin{equation}\label{eq:iqp2}
 \min\{x^TQx+c^Tx:Ax\le b,\ Cx=d,\ x\in\Z^n\},
\end{equation}
where the rows of $C\in\Z^{k\times n}$ are independent and
$V=\ker C$ has dimension $r=n-k$. Initially $C$ is empty; supplied
equalities are included as pairs of opposite rows of $Ax\le b$
and hence counted in $L_A$ and $\Delta(A)$. The recursive equations
are additional restrictions. We first describe the finite-optimum
case; the test in Section~\ref{sec:unbounded} supplies the initial
classification needed by the full algorithm.

All matrix coefficient bounds use the maximum absolute entry.
The encoding length $\varphi_A\ge1$ is the maximum binary length of
one coefficient row of $A$, excluding its right-hand side;
$\varphi_{A,C}$ denotes the corresponding quantity for the combined
rows of $A$ and $C$. Total input length is always $\varphi$.
A right-hand-side-independent bound means that $b,d$ affect only the
fixed-degree polynomial in total encoding length. We take
$\Delta(C)\ge1$ by including the empty determinant, and interpret
an optimum over an empty feasible set as $+\infty$.

For a subspace $V$, the restricted bilinear form is
$Q|_V(u,v)=u^TQv$. If $B$ has columns forming a basis of $V$, its
matrix is $B^TQB$. Its inertia
$\nu(Q|_V)=(\nu_+,\nu_-,\nu_0)$ counts positive, negative, and
zero eigenvalues and is basis-independent by Sylvester's law
\cite{GolubVanLoan13}. In particular, $Q|_V=0$ means
$u^TQv=0$ for every $u,v\in V$.

\subsection{Small integer directions}\label{sec:adjugate}
We need integer vectors forming a basis of $\ker C$ over $\R$;
we do not require them to generate $\ker C\cap\Z^n$ as a lattice.
The following adjugate construction bounds every coordinate by a
minor of the current equality matrix.

\begin{lemma}[Adjugate basis construction]\label{lem:adjugate}
Let $C\in\Z^{k\times n}$ with $\operatorname{rank}(C)=k<n$.  There exist
linearly independent integer vectors $y_1,\ldots,y_{n-k}\in\ker(C)$ with
$\|y_i\|_\infty\le\Delta(C)$.
\end{lemma}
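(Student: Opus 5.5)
Since $C$ has rank $k$, we choose a nonsingular $k\times k$ submatrix $C_B$, indexed by a set $B$ of $k$ columns, and let $N$ be the set of the remaining $n-k$ column indices. For each $j\in N$ we build one vector $y_j$ by Cramer's rule, cleared of denominators. Put $y_j(B)=-\operatorname{adj}(C_B)\,C_{\cdot j}$, set $y_j(j)=\det C_B$, and set $y_j(\ell)=0$ for the other $\ell\in N$. Then
\[
 Cy_j=C_B y_j(B)+\det(C_B)\,C_{\cdot j}
 =-C_B\operatorname{adj}(C_B)C_{\cdot j}+\det(C_B)C_{\cdot j}=0,
\]
because $C_B\operatorname{adj}(C_B)=\det(C_B)I$. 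So $y_j\in\ker C$, and $y_j$ is integral since $\operatorname{adj}(C_B)$ and $C$ are integral.

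\emph{Independence.} Restrict the $n-k$ vectors to the coordinates in $N$. This gives $\det(C_B)$ times the identity matrix, which is nonsingular because $\det C_B\neq0$. Hence $y_j$, $j\in N$, are linearly independent. There are $n-k=\dim\ker C$ of them, so they form a basis of $\ker C$ over $\R$.

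\emph{Entry bound.} This is the only step needing care. The coordinate $y_j(j)=\det C_B$ is a $k\times k$ minor of $C$, so $|y_j(j)|\le\Delta(C)$. For $i\in B$, Cramer's rule identifies $\bigl(\operatorname{adj}(C_B)C_{\cdot j}\bigr)_i$ with the determinant of $C_B$ after its column $i$ is replaced by $C_{\cdot j}$. That matrix is the submatrix of $C$ on the column set $(B\setminus\{i\})\cup\{j\}$, up to a permutation of columns, which changes only the sign. So every entry of $y_j(B)$ is, up to sign, a $k\times k$ minor of $C$, and is bounded by $\Delta(C)$. The remaining zero entries satisfy the bound trivially. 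Therefore $\|y_j\|_\infty\le\Delta(C)$.

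The edge case $k=0$ (empty $C$) is covered by the convention $\Delta(C)\ge1$ from the empty determinant. In that case the standard unit vectors serve as the basis. They agree with the construction above, since $\det C_B=1$ for the empty $C_B$.
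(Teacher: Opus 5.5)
Your proposal is correct and follows essentially the same route as the paper: a nonsingular $k\times k$ column submatrix, the denominator-cleared Cramer vectors with $\det C_B$ on the single nonbasic coordinate, independence from the $\det(C_B)\,I$ block on the nonbasic coordinates, and the entry bound obtained by identifying each entry of $\operatorname{adj}(C_B)C_{\cdot j}$ with a $k\times k$ minor of $C$ in which one column of $C_B$ is replaced by $C_{\cdot j}$. You also treat the case $k=0$ with the standard unit vectors, as the paper does.
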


\begin{proof}
If $k=0$, take the standard coordinate vectors. Otherwise, given $C \in \mathbb{Z}^{k \times n}$ of rank $k < n$, pick any $k$ columns that form a
nonsingular submatrix and call it $C_S$, where $S \subseteq [n]$ is the corresponding index
set. Let $T = [n] \setminus S$ be the index set of the remaining $r = n-k$ columns. Define
$\delta := \det(C_S)$, and for each $t \in T$ let $C_t \in \mathbb{Z}^k$ denote the column of
$C$ at index $t$.

We define our proposed basis vectors for the kernel as follows: For each $t \in T$, define $y^{(t)} \in \mathbb{Z}^n$ by specifying its $S$- and
$T$-coordinate blocks separately:
\[
  y^{(t)}_S := -\,\mathrm{adj}(C_S)\, C_t \in \mathbb{Z}^k, \qquad
  y^{(t)}_T := \delta\, e_t \in \mathbb{Z}^r,
\]
where $e_t$ is the standard basis vector in $\mathbb{R}^r$ corresponding to index $t$.

Each $y^{(t)}$ lies in $\ker(C)$ because the adjugate identity
$C_S \cdot \mathrm{adj}(C_S) = \delta I$ gives
\[
  C\, y^{(t)} = C_S\, y^{(t)}_S + C_T\, y^{(t)}_T
             = -\delta\, C_t + \delta\, C_t = 0.
\]

The $r$ vectors are linearly independent since their $T$-blocks form $\delta \cdot I_r$, where $I_r$ is the identity matrix of dimension $r$. Linear independence and membership imply they are a basis for $\ker(C)$.

To show the norm bound holds, observe every entry of $y^{(t)}_T$ is $0$
or $\delta$, and $|\delta| \leq \Delta(C)$ by definition. Writing $M_{ij}$ for the $(i,j)$ minor of $C_S$, the $j$-th entry of
$\mathrm{adj}(C_S)\,C_t$ is
\[
  \sum_{i=1}^k (-1)^{i+j} M_{ij}\, [C_t]_i.
\]
This is precisely the cofactor expansion along column $j$ of the matrix formed by taking
$C_S$ and replacing its $j$-th column with $C_t$. Since this matrix has $k-1$ columns from $C_S$ and
one column $C_t$, it is a $k \times k$ submatrix of $C$, and its determinant is bounded
in absolute value by $\Delta(C)$. Thus $\|y^{(t)}\|_\infty \leq \Delta(C)$.
\end{proof}

\subsection{Shallow solutions and gradient values}
A feasible point is deep with respect to $y\in V\cap\Z^n$ if
both $x+y$ and $x-y$ are feasible, and shallow otherwise. Relative
to the chosen basis, it is deep if this holds for every basis
vector. The identity
\begin{equation}\label{eq:direction-expansion}
 f(x\pm y)-f(x)=\pm(2y^TQx+c^Ty)+y^TQy
\end{equation}
is the common source of the gradient and curvature bounds.

\begin{lemma}[Deep optimality forces non-negative curvature]\label{lem:deepcurv}
Let $y\in\ker(C)$ and suppose $x^*$ is optimal for~\eqref{eq:iqp2} and
deep with respect to~$y$ (i.e., both $x^*+y$ and $x^*-y$ are feasible).
Then $y^T Qy\ge 0$.
\end{lemma}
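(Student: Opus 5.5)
The plan is to add the two one-sided inequalities supplied by optimality and read off the curvature term; the identity \eqref{eq:direction-expansion} does almost all the work.

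\textbf{Step 1: both neighbours are feasible node points.} By hypothesis, $x^*+y$ and $x^*-y$ satisfy $Ax\le b$. They satisfy $Cx=d$ because $y\in\ker(C)$. They are integral because $y$ is an integer vector; in the intended use, $y$ is one of the adjugate directions of Lemma~\ref{lem:adjugate}, which lie in $V\cap\Z^n$. If only $y\in\ker(C)$ is assumed, integrality of $x^*\pm y$ is still part of the deepness hypothesis, since feasibility for~\eqref{eq:iqp2} includes $x\in\Z^n$. In either reading, both $x^*\pm y$ are feasible points of~\eqref{eq:iqp2}.

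\textbf{Step 2: apply optimality and add.} Optimality of $x^*$ gives
\[
f(x^*+y)-f(x^*)\ge0 \quad\text{and}\quad f(x^*-y)-f(x^*)\ge0 .
\]
By~\eqref{eq:direction-expansion}, these are
\[
(2y^TQx^*+c^Ty)+y^TQy\ge0 \quad\text{and}\quad -(2y^TQx^*+c^Ty)+y^TQy\ge0 .
\]
Adding them cancels the linear gradient term and leaves $2\,y^TQy\ge0$, so $y^TQy\ge0$.

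\textbf{Optional check of the identity.} If one wants \eqref{eq:direction-expansion} verified rather than cited, expand $(x^*\pm y)^TQ(x^*\pm y)$ and use the symmetry of $Q$ to merge $y^TQx^*$ and $x^{*T}Qy$ into $2y^TQx^*$.

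There is no genuine obstacle here. The only point needing care is the membership argument in Step 1: confirming that $x^*\pm y$ is feasible for the node problem~\eqref{eq:iqp2}, including $Cx=d$ and integrality, and not merely for the original constraints $Ax\le b$.
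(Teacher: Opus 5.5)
Your proposal is correct and matches the paper's proof: both apply optimality to the two feasible neighbours $x^*\pm y$, expand via \eqref{eq:direction-expansion}, and add so that the linear gradient term cancels, leaving $2\,y^TQy\ge0$. Your extra check in Step~1 that $x^*\pm y$ satisfy $Cx=d$ and integrality is implicit in the paper's definition of ``deep'' (feasibility for the node problem~\eqref{eq:iqp2}).
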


\begin{proof}
Since $x^*$ is optimal and both $x^*\pm y$ are feasible, we have
$f(x^*+y)\ge f(x^*)$ and $f(x^*-y)\ge f(x^*)$.  Expanding and adding:
\[
  \bigl[f(x^*+y)-f(x^*)\bigr]+\bigl[f(x^*-y)-f(x^*)\bigr]
  = 2\,y^T Qy \;\ge\; 0. \qedhere
\]
\end{proof}

\begin{lemma}[Structural branching rule]\label{lem:trichotomy}
Let $x^*$ be an attained optimum at a node, and let $y_1,\ldots,y_r$
be the basis from Lemma~\ref{lem:adjugate}. If $x^*$ is shallow,
there is a row $a_j\notin\rowsp C$ such that
\[
 0\le b_j-a_j^Tx^*\le nL_A\Delta(C). \tag{S}
\]
If $x^*$ is deep, then for every $i$,
\[
 y_i^TQy_i\ge0,\qquad
 |2y_i^TQx^*+c^Ty_i|\le y_i^TQy_i. \tag{G}
\]
Finally, if $y_i^TQ\in\rowsp C$ for every $i$, then $Q|_V=0$
and the objective on $Cx=d$ is affine. \hfill(P)
\end{lemma}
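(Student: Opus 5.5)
The three parts of Lemma~\ref{lem:trichotomy} are independent, and I will prove them in the order (S), (G), (P). For (S), suppose $x^*$ is shallow. Then for some basis vector $y=y_i$ one of $x^*\pm y$ is infeasible; fix the sign $\sigma\in\{\pm1\}$ with $x^*+\sigma y$ infeasible. This point is integral and satisfies $Cx=d$, because $y\in\ker C\cap\Z^n$. So it must violate some inequality: $a_j^T(x^*+\sigma y)>b_j$ for some $j$. This gives
\[
0\le b_j-a_j^Tx^* < \sigma a_j^Ty\le \|a_j\|_1\|y\|_\infty\le nL_A\Delta(C),
\]
using Lemma~\ref{lem:adjugate} for $\|y\|_\infty\le\Delta(C)$. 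It remains to show $a_j\notin\rowsp C$. If $a_j=\lambda^TC$, then $a_j^Ty=\lambda^TCy=0$, so $a_j^T(x^*+\sigma y)=a_j^Tx^*\le b_j$, a contradiction. Hence the violated row is not in $\rowsp C$.

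For (G), suppose $x^*$ is deep with respect to every $y_i$. Lemma~\ref{lem:deepcurv} gives $y_i^TQy_i\ge0$. For the gradient bound, write $g=2y_i^TQx^*+c^Ty_i$ and $\kappa=y_i^TQy_i$. Optimality together with \eqref{eq:direction-expansion} gives $g+\kappa\ge0$ and $-g+\kappa\ge0$, which is exactly $|g|\le\kappa$.

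For (P), suppose $Q y_i\in\rowsp C$ for every $i$; since $Q$ is symmetric, this is the same as $y_i^TQ\in\rowsp C$. Then $y_i^TQv=0$ for all $v\in V=\ker C$. The $y_i$ span $V$ over $\R$ (Lemma~\ref{lem:adjugate}), so bilinearity gives $u^TQv=0$ for all $u,v\in V$, i.e.\ $Q|_V=0$. To see that the objective is affine, fix any real solution $x_0$ of $Cx=d$ and write $x=x_0+v$ with $v\in V$. Then
\[
f(x)=x_0^TQx_0+2x_0^TQv+v^TQv+c^Tx_0+c^Tv,
\]
and since $v^TQv=0$ this is affine in $v$, hence affine on the affine space $\{Cx=d\}$. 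If the system has no real solution, the claim is vacuous.

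No step is a real obstacle. The points needing care are in (S): the sign bookkeeping (one must use the strict violation $a_j^T(x^*+\sigma y)>b_j$, not an arbitrary inequality), and the check that the violated row lies outside $\rowsp C$, which is what makes the subsequent branch reduce the dimension.
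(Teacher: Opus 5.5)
Your proof is correct and takes the same route as the paper. For (S) you use the integer neighbor $x^*\pm y_i$ that violates some row, with $\|y_i\|_\infty\le\Delta(C)$ and $Cy_i=0$ showing $a_j\notin\rowsp C$; for (G) the two one-sided expansions; and for (P) spanning plus bilinearity, with the explicit affine expansion that the paper records separately in Corollary~\ref{cor:linear}.
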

\begin{proof}
For (S), one of $x^*\pm y_i$ violates a row $a_j$. Thus
$0\le b_j-a_j^Tx^*<|a_j^Ty_i|\le nL_A\Delta(C)$.
Since $a_j^Ty_i\ne0$ and $Cy_i=0$, this row is outside
$\rowsp C$. For (G), both expressions in
\eqref{eq:direction-expansion} are nonnegative; adding them gives
the curvature condition, and combining them gives the gradient
interval. For (P), expand $v\in V$ in the basis; each row
$y_i^TQ$ annihilates $V$, so $v^TQw=0$ for every $v,w\in V$.
\end{proof}
These are the shallow/deep alternatives in Lokshtanov's framework
\cite{Lokshtanov17}, with the smaller basis bound and an explicit
affine leaf rule. The row $a_j^Tx=b'_j$ fixes a slack level; it
need not be the original boundary $a_j^Tx=b_j$.

\subsection{Affine leaves and determinant growth}
\begin{corollary}[Residual linear objective]\label{cor:linear}
Let $C'x=d'$ be consistent, let $W=\ker(C')$, and suppose $Q|_W=0$.
Choose any rational solution $x_0$ of $C'x=d'$ and set
$\ell=2Qx_0+c$. On this affine set,
\[
 f(x)=\ell^Tx-x_0^TQx_0.
\]
Consequently the residual integer or mixed-integer problem is solved
by minimizing $\ell^Tx$ over its original residual feasible set.
The coefficient vector and constant have polynomial encoding length
in the node data and can be computed by rational linear algebra.
\end{corollary}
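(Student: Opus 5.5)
The plan is to parametrize the affine set $\{x: C'x=d'\}$ as $x=x_0+w$ with $w\in W=\ker(C')$, expand the quadratic objective around $x_0$, and use $Q|_W=0$ to kill the quadratic term in $w$. Concretely, for any $x$ with $C'x=d'$ we have $w=x-x_0\in W$, and
\[
 f(x)=(x_0+w)^TQ(x_0+w)+c^T(x_0+w)
 =x_0^TQx_0+2x_0^TQw+w^TQw+c^Tx_0+c^Tw,
\]
using symmetry of $Q$. The term $w^TQw$ vanishes because $Q|_W=0$ means $u^TQv=0$ for all $u,v\in W$, in particular for $u=v=w$.

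Next I substitute $w=x-x_0$ back into the linear terms: $2x_0^TQw+c^Tw=\ell^T(x-x_0)$ with $\ell=2Qx_0+c$. Then the constant becomes $x_0^TQx_0+c^Tx_0-\ell^Tx_0=x_0^TQx_0+c^Tx_0-2x_0^TQx_0-c^Tx_0=-x_0^TQx_0$, giving exactly $f(x)=\ell^Tx-x_0^TQx_0$ on the affine set. Because this identity holds at every point of the affine set, it holds on every subset of it, in particular on the residual feasible set (original inequalities, the equations $C'x=d'$, and the integrality pattern $\Z^n$ or $\ZR$). Since the objectives differ by the constant $x_0^TQx_0$, minimizers and unboundedness status coincide. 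So the residual problem is the ILP/MILP of minimizing $\ell^Tx$ over that set.

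For the complexity claim, I would take $x_0$ to be a basic solution of $C'x=d'$: choose a nonsingular $k\times k$ column submatrix $C'_S$ (available since the rows are independent), set $x_{0,S}=(C'_S)^{-1}d'$ and the other coordinates to $0$. By Cramer's rule the entries are ratios of subdeterminants of $[C'\mid d']$, hence of encoding length polynomial in that of $C',d'$ (standard Hadamard-type bounds). Then $\ell=2Qx_0+c$ and $x_0^TQx_0$ are obtained by polynomially many rational arithmetic operations on numbers of polynomial size, so their encoding length is polynomial in the node data. Gaussian elimination with polynomial bit complexity computes all of this.

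There is no real obstacle; the only point requiring care is that $Q|_W=0$ is used only for $w^TQw=0$, which does not need $Q$ to vanish on all of $\R^n$, and that the cross term is correctly combined using $Q=Q^T$. The encoding-length statement is the routine step most dependent on citing the standard polynomial bound for solutions of rational linear systems.
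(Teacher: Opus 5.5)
Your proposal is correct and follows the paper's proof: write $x=x_0+w$ with $w\in W$, expand, drop $w^TQw$ using $Q|_W=0$, and collect $f(x_0)+(2Qx_0+c)^Tw=\ell^Tx-x_0^TQx_0$. Your basic-solution/Cramer argument for the encoding-length claim supplies detail the paper leaves implicit. If the rows of $C'$ were dependent you would first pass to a row basis, but at every node of the algorithm they are independent.
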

\begin{proof}
Write $x=x_0+w$, $w\in W$. Expanding the objective and using
$w^TQw=0$ gives
$f(x)=f(x_0)+(2Qx_0+c)^Tw=\ell^Tx-x_0^TQx_0$.
\end{proof}

The following estimate will also be used for projected constraints
in the mixed algorithm and for the sequential comparison.

\begin{lemma}[Subdeterminant growth after branching]\label{lem:growth}
Put $\Delta:=\Delta(C)$. When adding a row to $C$, with $y$ an adjugate basis vector:
\begin{itemize}
  \item[(a)] Constraint branch (adding $a_j$ to $C$): $\Delta(C')\le nL_A\cdot\Delta$.
  \item[(b)] Gradient branch (adding $2y^T Q$): $\Delta(C')\le 2n^2 L_Q\cdot\Delta^2$.
\end{itemize}
\end{lemma}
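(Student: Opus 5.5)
The plan is to bound every square subdeterminant of the augmented matrix $C'$ by Laplace expansion along the newly added row. Let $C'$ be $C$ with one extra row $g^T$ appended. A $s\times s$ minor of $C'$ either avoids the new row, so it is a minor of $C$ and is at most $\Delta$ in absolute value. Otherwise we expand along $g^T$ and get
\[
 \Bigl|\det M\Bigr|\le\sum_{j=1}^{s}|g_j|\,\bigl|\det M_{j}\bigr|
 \le s\,\|g\|_\infty\,\Delta ,
\]
where each $M_j$ is an $(s-1)\times(s-1)$ minor of $C$. Since $s\le n$, this gives $\Delta(C')\le \max(\Delta,n\|g\|_\infty\Delta)=n\|g\|_\infty\Delta$, using $\|g\|_\infty\ge1$ whenever $g\neq0$.

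For part (a) the new row is $g=a_j$, and $\|a_j\|_\infty\le L_A$, so the bound $nL_A\Delta$ follows at once. The empty-determinant convention $\Delta\ge1$ covers the case $s=1$.

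For part (b) the new row is $g=2Qy$, i.e. $2y^TQ$, with $y$ an adjugate vector from Lemma~\ref{lem:adjugate}. Each entry is $2\sum_k Q_{\ell k}y_k$, so
\[
 \|2Qy\|_\infty\le 2nL_Q\|y\|_\infty\le 2nL_Q\Delta .
\]
Substituting into the expansion bound gives $n\cdot 2nL_Q\Delta\cdot\Delta=2n^2L_Q\Delta^2$, as claimed.

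There is no real obstacle; two details just need checking. First, the adjugate bound $\|y\|_\infty\le\Delta(C)$ must refer to the same $C$ as in the statement, so $y$ has to be computed from the pre-branch matrix. Second, if the added row happens to be integral but the problem is stated with rational data, we still have $g\in\Z^n$ here because $Q$ and $y$ are integral. That keeps all minors integers, and the bound stays a bound on integer subdeterminants.
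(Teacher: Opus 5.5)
Your proof is correct and follows the paper's argument: a minor that avoids the new row is bounded by $\Delta$, and Laplace expansion along the appended row gives $|\det M|\le n\|g\|_\infty\Delta$, into which you substitute $\|a_j\|_\infty\le L_A$ and $\|2y^TQ\|_\infty\le 2nL_Q\Delta$ (from the adjugate bound) exactly as the paper does. Your remarks on the empty-determinant case $s=1$ and on taking $y$ from the pre-branch matrix spell out details the paper leaves implicit.
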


\begin{proof}
Let $C' \in \Z^{(k+1) \times n}$ be the matrix obtained by appending the new
row~$r^T$ to $C \in \Z^{k \times n}$.  Consider any $(k'+1) \times (k'+1)$
submatrix $M$ of $C'$.  If $M$ does not use the new row, then
$|\det(M)| \le \Delta$ by definition.  If $M$ does use the new row, we expand
$\det(M)$ along that row.  There are at most $k'+1 \le n$ cofactors, each of
which is the determinant of a $k' \times k'$ submatrix of $C$ and hence
bounded in absolute value by $\Delta$.  Therefore
\[
  |\det(M)| \;\le\; (k'+1) \cdot \max_i |r_i| \cdot \Delta
             \;\le\; n \cdot \max_i |r_i| \cdot \Delta.
\]

For part~(a), the new row is a constraint row $a_j^T$ with
$\|a_j\|_\infty \le L_A$, so $\max_i |r_i| \le L_A$ and
$\Delta(C') \le n L_A \cdot \Delta$.

For part~(b), the new row is the gradient row $h^T = 2y^T Q$, where
$y \in \ker(C)$ is an adjugate basis vector with
$\|y\|_\infty \le \Delta$ (Lemma~\ref{lem:adjugate}).  Each entry satisfies
$|h_\ell| = |2 \sum_{j=1}^n y_j Q_{j\ell}| \le 2n \cdot \Delta \cdot L_Q$,
so $\max_i |r_i| \le 2nL_Q\Delta$ and
$\Delta(C') \le n \cdot 2nL_Q\Delta \cdot \Delta = 2n^2 L_Q \cdot \Delta^2$.
\end{proof}

\section{Curvature batching for IQP}\label{sec:batch}
We prove Theorem~\ref{thm:main} using the framework of
Section~\ref{sec:prelim}. Constraint branches retain an optimum
near a boundary. If an optimum is deep in every basis direction,
all its gradient values lie in bounded integer intervals, and we
can impose the corresponding equations simultaneously. The resulting
quadratic form vanishes on the new kernel, so every path ends
with an ILP after at most one batch.

\subsection{The curvature-batch algorithm}

We classify the basis directions according to the sign of their
quadratic curvature.  This classification determines which type of branching
step to take at each recursion node.

\begin{definition}[Curvature classification]\label{def:curvclass}
Given a basis of integer vectors $\{y_1,\ldots,y_r\}$ of $V=\ker(C)$, we
partition the index set $[r]$ into three \emph{curvature classes}:
\begin{align*}
  \calN&=\{i\in[r]:y_i^T Qy_i<0,\; y_i^T Q\notin\rowsp(C)\}, & \text{(negative curvature)}\\
  \calG&=\{i\in[r]:y_i^T Qy_i\ge 0,\;y_i^T Q\notin\rowsp(C)\}, & \text{(non-negative curvature)}\\
  \calF&=\{i\in[r]:y_i^T Q\in\rowsp(C)\}. & \text{(flat on the current kernel)}
\end{align*}
\end{definition}

\begin{corollary}[Negative curvature forces shallowness]\label{lem:negshallow}
If $y_i^T Qy_i<0$, then every optimal solution is shallow with respect
to~$y_i$, and no gradient branching is required.
\end{corollary}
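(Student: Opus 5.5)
This is the contrapositive of Lemma~\ref{lem:deepcurv}. Let $x^*$ be any optimal solution of the node problem~\eqref{eq:iqp2}, and suppose, for contradiction, that $x^*$ is deep with respect to $y_i$, so that both $x^*+y_i$ and $x^*-y_i$ are feasible. Since $y_i\in\ker C$, these two points satisfy $Cx=d$, so they are feasible for the node and not merely for $Ax\le b$. The hypotheses of Lemma~\ref{lem:deepcurv} therefore hold. The lemma gives $y_i^TQy_i\ge0$, which contradicts $y_i^TQy_i<0$. Hence $x^*$ is shallow with respect to $y_i$. The argument uses only optimality of $x^*$, so it applies to every optimal solution, which is the first claim.

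For the second claim, I will invoke the shallow case (S) of Lemma~\ref{lem:trichotomy}. Some row $a_j\notin\rowsp C$ satisfies $0\le b_j-a_j^Tx^*\le nL_A\Delta(C)$. Since this holds for every optimum, the algorithm can branch over the finitely many slack values $b_j-a_j^Tx^*$ on that row and guarantee that some branch contains an optimum. No gradient equation $2y_i^TQx=\text{const}$ needs to be guessed for index $i$. Equivalently, the gradient alternative (G) never applies to an index in $\calN$, because (G) would itself require $y_i^TQy_i\ge0$.

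The only point needing care is whether an attained optimum exists. The corollary is stated for optimal solutions, so if none exists the claim is vacuous; in the finite-optimum case this is the standing assumption of the section. I do not expect any real obstacle beyond checking that deepness is measured relative to the node's full feasible set, including the equations $Cx=d$, and that $y_i\in\ker C$ keeps both $x^*\pm y_i$ inside it.
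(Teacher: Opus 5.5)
Your proposal is correct and follows the paper's own argument. The paper likewise takes the contrapositive of Lemma~\ref{lem:deepcurv} to conclude that every optimum is shallow with respect to $y_i$. It then observes that gradient branches exist only to catch deep optima, so the constraint branches of case (S) in Lemma~\ref{lem:trichotomy} cover every optimum.
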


\begin{proof}
By Lemma~\ref{lem:deepcurv}, any deep optimum would require
$y_i^T Qy_i\ge 0$. Gradient branches exist only to catch deep optima, so none is needed when $y_i^T Qy_i<0$ for some basis vector $y_i$: every optimum is then shallow with respect to $y_i$, and the constraint branches cover it.
\end{proof}

We now present the algorithm. Every returned finite value carries a
corresponding feasible point, and taking a minimum retains that point.
At each recursion node it maintains
an equality system $Cx=d$ with $C\in\Z^{k\times n}$, $\operatorname{rank}(C)=k$,
and kernel $V=\ker(C)$ of dimension $r=n-k$.

\begin{algorithm}[H]
\caption{CurvatureBatch}\label{alg:batch}

\small
\textbf{Input:} $Q,c,A,b$ and independent equalities $Cx=d$.
\textbf{Precondition:} The integer objective is bounded below on the
feasible set (which may be empty). First apply
Theorem~\ref{thm:unb-main} to classify the original instance.
\textbf{Output:} The optimum on this node, or $+\infty$ if infeasible.
\begin{enumerate}[leftmargin=*,itemsep=3pt]
\item Discard an inconsistent affine system. Put $V=\ker C$ and
$r=\dim V$. If $r=0$, evaluate the unique solution only after
checking $Ax\le b$ and integrality; return.
\item Compute the adjugate basis $y_1,\ldots,y_r$ and the classes
$\calN,\calG,\calF$ of Definition~\ref{def:curvclass}.
\item If $\calF=[r]$, solve the residual ILP with the objective
of Corollary~\ref{cor:linear}; return its quadratic value, or
$+\infty$ if infeasible. No further children are generated.
\item Initialize $\mathit{best}=+\infty$. For each
$a_j\notin\rowsp C$ and integer
$b_j'\in[b_j-nL_A\Delta(C),b_j]$, recurse after appending
$a_j^Tx=b_j'$; update $\mathit{best}$.
\item If $\calN=\emptyset$ and $\calG\ne\emptyset$, choose
$\calG'\subseteq\calG$ maximal so that its rows $y_i^TQ$ are
independent modulo $\rowsp C$. For each integer tuple $(z_i)$
with $|z_i|\le y_i^TQy_i$, $i\in\calG'$, append all equations
$2y_i^TQx=z_i-c^Ty_i$ simultaneously and recurse. Update
$\mathit{best}$ with the result.
\item Return $\mathit{best}$.
\end{enumerate}

\end{algorithm}

The batch branch is the essential change. When $\calN=\emptyset$ and
$\calG\ne\emptyset$, the algorithm selects a maximal linearly independent
subset $\calG'\subseteq\calG$ (which can be computed by greedy rank-extension of the row
space of~$C$) and adds all gradient rows $2y_i^T Qx+c^T y_i=z_i$,
$i\in\calG'$, simultaneously for each feasible tuple~$(z_i)$.  The other children fix shallow slack levels; a flat node is solved
using Corollary~\ref{cor:linear}.

\subsection{Correctness and post-batch structure}
\label{sec:spanning}

We first establish correctness, then show that the batch eliminates all
quadratic curvature from the residual problem.

\begin{proposition}[Correctness]\label{prop:correct}
For an instance with a finite attained optimum,
Algorithm~\ref{alg:batch} returns that optimum. For an infeasible
instance it returns $+\infty$.
\end{proposition}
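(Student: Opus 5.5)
Correctness will be proved by induction on the kernel dimension $r=n-k$, showing that every node returns the optimum of its own subproblem~\eqref{eq:iqp2}, or $+\infty$ if that subproblem is infeasible. Two preliminary remarks make the induction well founded. First, every child appends at least one row independent of $\rowsp C$: in step~4 we require $a_j\notin\rowsp C$, and in step~5 the set $\calG'$ is nonempty and its rows are independent modulo $\rowsp C$. Hence $r$ strictly decreases, and recursion depth is at most $n$. Second, boundedness below is inherited by every node, since each node's feasible set is contained in the original one. So the precondition holds at every node, and any finite node optimum is attained.

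\emph{Soundness.} Every value returned comes from a feasible point of the node, because children only add equations. Leaves are handled directly. At $r=0$ the unique solution is checked explicitly. At a flat node, Lemma~\ref{lem:trichotomy}(P) and Corollary~\ref{cor:linear} show that $f$ agrees exactly with the affine objective on $Cx=d$. Bounded-below IQP optima are attained, and so is the ILP. It therefore suffices to show that some child contains an optimal point $x^*$ of the node; the minimum over children then equals the node optimum. If the node is infeasible, every child is infeasible too, so $+\infty$ propagates.

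\emph{Completeness at a non-flat node.} Fix an optimum $x^*$ of the node, which is attained. Suppose first that $x^*$ is shallow relative to the basis. By Lemma~\ref{lem:trichotomy}(S) there is $a_j\notin\rowsp C$ with integer slack in $[0,nL_A\Delta(C)]$. Set $b'_j=a_j^Tx^*$, which is an integer since $a_j$ and $x^*$ are integral. Then $x^*$ lies in the step~4 child for this pair $(a_j,b'_j)$. Suppose instead that $x^*$ is deep. By Corollary~\ref{lem:negshallow}, or directly by (G), we get $\calN=\emptyset$. Since $\calF\ne[r]$, also $\calG\ne\emptyset$, so step~5 runs. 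By (G), each $z_i:=2y_i^TQx^*+c^Ty_i$ is an integer with $|z_i|\le y_i^TQy_i$. Hence $x^*$ satisfies the appended equations for the tuple $(z_i)_{i\in\calG'}$ and lies in that child. Note that we only need $\calG'\subseteq\calG$ for this membership. Maximality and independence of $\calG'$ matter only for the strict decrease of $r$ (and later for the post-batch structure), not for correctness.

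The main subtlety I expect is the consistency of the recursion invariants rather than the case analysis. Specifically, the child's equality system must remain independent, which it does because the new rows are independent modulo $\rowsp C$. The inductive hypothesis must also apply to children even though their right-hand sides $b'_j$ or $z_i-c^Ty_i$ are new; this is fine because the induction is over all nodes of the given form with arbitrary integer $d$. Finally, the bound used in (S) is $\Delta(C)$ for the current $C$, which is exactly the range enumerated in step~4, so no level is missed.
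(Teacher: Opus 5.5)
Your proof is correct and is essentially the paper's argument. The paper fixes one global optimum $x^*$ and follows it down a single root-to-leaf path: a shallow $x^*$ goes to the constraint child, a deep $x^*$ forces $\calN=\emptyset$ and lies in the batch child with its gradient tuple, and a flat node is an ILP leaf. You package the same case analysis as an induction on $\dim\ker C$ showing that every node returns its own optimum. This is a slightly stronger invariant, and it needs the inheritance of boundedness below, which you correctly supply.
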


\begin{proof}
Every leaf evaluates a feasible point, so an infeasible instance returns
$+\infty$. Each nonterminal step increases the rank of $C$, so every
path terminates after at most $n$ such steps. For a feasible instance
with an attained optimum, we argue that an optimal solution $x^*$
is feasible in at least one child subproblem at each node, and is therefore
found at some leaf.

If $x^*$ is shallow, some constraint $a_j^T x^* = b_j'$ holds with $b_j'$ in
the enumerated range, so the algorithm creates a child in which $x^*$ is
feasible. Otherwise, we assume $x^*$ is deep. By Corollary~\ref{lem:negshallow}, we have that $\calN=\emptyset$. If $\calG\ne\emptyset$, we construct $\calG'$ by greedily adding indices from $\calG$ whenever $y_i^T Q$
is linearly independent of the current row space. For each $i\in\calG'$ the gradient
value $z_i^* := 2y_i^T Qx^* + c^T y_i$ is an integer with
$|z_i^*|\le y_i^T Qy_i$ (Lemma~\ref{lem:trichotomy}(\textbf{G})).  The algorithm
enumerates all tuples $(z_i)_{i\in\calG'}$ in this range, so in particular it
tries $(z_i^*)_{i\in\calG'}$.  Since $x^*$ already satisfies each equality
$2y_i^T Qx = z_i^*-c^Ty_i$ by construction, $x^*$ is feasible in the corresponding
child. Finally, if $y_i^T Q\in\rowsp(C)$ for every~$i$, the node is a flat ILP
and $x^*$ is found directly. In each case, the recursion continues on a subproblem that still
contains~$x^*$.
\end{proof}

\medskip
\noindent\textbf{Post-batch structure.}
With correctness in hand, we turn to the structural consequence of the batch.
Our claim is that after a single batch step, the quadratic form $Q$
vanishes entirely on the new kernel $W=\ker(C')$, so the residual problem is
linear.  This property is the reason why the algorithm does not batch twice in any root-to-leaf path.

The argument proceeds in two steps.  First, we establish that $v^T Q\in\rowsp(C')$ for every $v\in V$
(Lemma~\ref{lem:gradspan}).  From this, the vanishing of the
restricted form follows immediately (Theorem~\ref{thm:vanish}).

Recall that the batch augments $C$ by appending
the matrix $H$ whose rows are $\{2y_i^T Q\}_{i\in\calG'}$, so
$C'=\bigl[\begin{smallmatrix}C\\H\end{smallmatrix}\bigr]$.  Therefore
$\rowsp(C')=\rowsp(C)+\Span\{y_i^T Q:i\in\calG'\}$. 

\begin{lemma}\label{lem:gradspan}
Suppose $\calN=\emptyset$.  Then for every $v\in V$,
$v^T Q\in\rowsp(C')$.
\end{lemma}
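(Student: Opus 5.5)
Since $y_1,\ldots,y_r$ is a basis of $V$ (Lemma~\ref{lem:adjugate}) and $\rowsp(C')$ is a linear subspace, it suffices to show $y_i^TQ\in\rowsp(C')$ for every $i\in[r]$. The claim for general $v=\sum_i\lambda_iy_i$ then follows because $v^TQ=\sum_i\lambda_i\,y_i^TQ$ is a combination of vectors in $\rowsp(C')$.

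We will check membership for each basis index according to its curvature class in Definition~\ref{def:curvclass}. Under the hypothesis $\calN=\emptyset$, every index lies in $\calF$ or $\calG$, so there are three cases.
\begin{enumerate}[leftmargin=*,itemsep=2pt]
\item[(i)] If $i\in\calF$, then $y_i^TQ\in\rowsp(C)\subseteq\rowsp(C')$ by definition of $\calF$.
\item[(ii)] If $i\in\calG'$, then $2y_i^TQ$ is a row of $H$, so $y_i^TQ\in\rowsp(C')$.
\item[(iii)] If $i\in\calG\setminus\calG'$, we use the maximality of $\calG'$. Since $\calG'$ is a maximal subset of $\calG$ whose rows are independent modulo $\rowsp C$, adding $i$ would destroy that independence. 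Hence $y_i^TQ\in\rowsp(C)+\Span\{y_j^TQ:j\in\calG'\}=\rowsp(C')$.
\end{enumerate}
Step (iii) is the only point needing care: it requires that "maximal" is read as maximal with respect to independence modulo $\rowsp C$. This holds whether $\calG'$ is constructed by the greedy rank-extension described after Algorithm~\ref{alg:batch} or chosen in any other way. In the greedy construction, each rejected index $i$ is rejected because $y_i^TQ$ already lies in the current span $\rowsp(C)+\Span\{y_j^TQ:j\in\calG'_{\text{so far}}\}$, and that span only grows as more indices are added.

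Combining (i)–(iii) shows $y_i^TQ\in\rowsp(C')$ for all $i\in[r]$. Linearity then gives the statement for every $v\in V$. We note that the hypothesis $\calN=\emptyset$ is used only to guarantee that $\calF$ and $\calG$ exhaust $[r]$; no sign information on the curvature is needed for this step.
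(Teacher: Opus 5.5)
Your proposal is correct and follows essentially the same argument as the paper: expand $v$ in the adjugate basis, absorb the $\calF$ terms into $\rowsp(C)\subseteq\rowsp(C')$, note that the $\calG'$ rows appear in $C'$ up to the factor $2$, and use maximality of $\calG'$ to place $y_i^TQ$ in $\rowsp(C)+\Span\{y_j^TQ: j\in\calG'\}=\rowsp(C')$ for the remaining indices of $\calG$. Your remark on the greedy construction only spells out the maximality step that the paper uses directly.
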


\begin{proof}
Because $\{y_1,\ldots,y_r\}$ span $V$ over $\R$ (Lemma~\ref{lem:adjugate}),
for any $v \in V$, we can write $v=\sum_{i=1}^r\alpha_i y_i$, so $v^T Q=\sum_{i=1}^r\alpha_i y_i^T Q$.
We redistribute each term in the sum.  If $y_i^T Q\in\rowsp(C)$, the term
$\alpha_i y_i^T Q$ is absorbed into~$\rowsp(C)\subseteq\rowsp(C')$.
Since $\calN=\emptyset$,
every remaining index lies in~$\calG$.  For $i\in\calG\setminus\calG'$,
maximality of $\calG'$ implies $y_i^T Q$ lies in
$\rowsp(C)+\Span\{y_j^T Q : j\in\calG'\}=\rowsp(C')$, so its contribution is
again absorbed.  Therefore $v^T Q\in\rowsp(C')$.
\end{proof}

Using Lemma \ref{lem:gradspan}, we show that the remaining problem is linear.

\begin{theorem}\label{thm:vanish}
Let $W = \ker(C')$, where $C'$ is the resulting equality matrix after the batch of gradient constraints is added. Then $Q|_W=0$; that is, $w_1^T Qw_2=0$ for
all $w_1,w_2\in W$.
\end{theorem}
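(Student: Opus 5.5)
The plan is to derive the theorem directly from Lemma~\ref{lem:gradspan}, together with the elementary fact that $\ker(C')$ is the orthogonal complement of $\rowsp(C')$. The statement is implicitly about the batch branch of Algorithm~\ref{alg:batch}, which is only executed when $\calN=\emptyset$. So the hypothesis of Lemma~\ref{lem:gradspan} holds, and for every $v\in V$ we have $v^TQ\in\rowsp(C')$.

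Next I will record the containment $W\subseteq V$. This holds because $C$ is a submatrix of rows of $C'=\bigl[\begin{smallmatrix}C\\H\end{smallmatrix}\bigr]$, so any $w$ with $C'w=0$ in particular satisfies $Cw=0$. Now fix $w_1,w_2\in W$. Since $w_1\in V$, Lemma~\ref{lem:gradspan} lets me write $w_1^TQ=\lambda^TC'$ for some rational vector $\lambda$. Then
\[
 w_1^TQw_2=\lambda^T(C'w_2)=0,
\]
because $w_2\in\ker C'$. As $w_1,w_2$ were arbitrary, $Q|_W=0$. This is exactly the flat condition (P) of Lemma~\ref{lem:trichotomy} at the child node, so Corollary~\ref{cor:linear} applies there.

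I do not expect a real obstacle, since all the spanning work is already done in Lemma~\ref{lem:gradspan}. The only point needing care is bookkeeping. The gradient rows are appended with arbitrary right-hand sides $z_i-c^Ty_i$, but $W$ depends only on the coefficient matrix $C'$, not on $d'$, so the conclusion holds uniformly over all enumerated tuples $(z_i)$. The factor $2$ in $H$ does not change $\rowsp(C')$. Independence of the new rows modulo $\rowsp C$, guaranteed by the choice of $\calG'$, is needed only so that $C'$ keeps full row rank, not for the vanishing itself.

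As a final remark, if the child system $C'x=d'$ is inconsistent, the node is simply discarded in step~1 of the algorithm. Otherwise its adjugate basis lies in $W$, so every index is in $\calF$, and the child is an ILP leaf.
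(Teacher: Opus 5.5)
Your proposal is correct and is essentially the paper's proof: Lemma~\ref{lem:gradspan} (valid because the batch runs only when $\calN=\emptyset$) gives $v^TQ\in\rowsp(C')$ for every $v\in V$, so $v^TQw=0$ for $w\in\ker C'$, and restricting to $v\in W\subseteq V$ gives $Q|_W=0$. One small point in your closing remark: if $W=\{0\}$, the child is handled by the single-point check of step~1 rather than by an ILP leaf.
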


\begin{proof}
Let $v\in V$ and $w\in W$.  By Lemma~\ref{lem:gradspan},
$v^T Q\in\rowsp(C')$, so $v^T Qw=0$ since $w\in\ker(C')$.
Restricting to $v\in W\subseteq V$ gives $Q|_W=0$.
\end{proof}

The residual objective is therefore the affine function in
Corollary~\ref{cor:linear}.

\begin{corollary}[Single batch suffices]\label{prop:onebatch}
The curvature batch is performed at most once along any root-to-leaf path. Every path consists of constraint steps, optionally one gradient batch,
and a terminal ILP or a single-point check.
\end{corollary}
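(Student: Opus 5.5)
The plan is to argue along an arbitrary root-to-leaf path of Algorithm~\ref{alg:batch} and show that after the first batch child is entered, every descendant node is flat, i.e.\ satisfies $\calF=[r]$. Step~3 then returns immediately, so neither Step~4 nor Step~5 is ever executed below a batch. The key invariant is $Q|_{\ker C}=0$, and I will show that it is preserved under adding equality rows.

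\emph{Step 1 (entering the batch).} Suppose a node with matrix $C$ executes Step~5, so $\calN=\emptyset$, and its child has matrix $C'=\bigl[\begin{smallmatrix}C\\H\end{smallmatrix}\bigr]$. By Theorem~\ref{thm:vanish}, $Q|_{W}=0$ for $W=\ker C'$. A child whose appended system is inconsistent is discarded in Step~1 of the child call, so it is harmless. The child's rows remain independent because $\calG'$ was chosen independent modulo $\rowsp C$.

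\emph{Step 2 (flatness propagates).} Consider any node with matrix $C''$ and $Q|_{\ker C''}=0$, and take adjugate basis vectors $y_i\in\ker C''$. For every $w\in\ker C''$ we have $y_i^TQw=0$. Hence $Q y_i$ annihilates $\ker C''$, and therefore $y_i^TQ\in(\ker C'')^\perp=\rowsp C''$. So every index lies in $\calF$, and Step~3 fires, producing a terminal ILP via Corollary~\ref{cor:linear}. If instead $r=0$, Step~1 returns a single-point check. In both cases the node is a leaf. In particular, the batch child itself is either a single-point check or an ILP leaf. Because descendants of a leaf do not exist, no second batch, and in fact no further constraint step, occurs after the batch.

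\emph{Step 3 (shape of the path).} Before the batch, each non-leaf node may only take Step~4, which appends one row $a_j$, or Step~5. Therefore a path consists of constraint steps, optionally one batch, and then a leaf of the type Step~1 or Step~3. Since every step increases $\rank C$ (the proof of Proposition~\ref{prop:correct}), the path is finite.

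The only point needing care is the identification $(\ker C'')^\perp=\rowsp C''$ together with the observation that the flatness test uses $\rowsp$ of the \emph{current} matrix. That identification is standard linear algebra over $\R$, or over $\mathbb{Q}$, since all data are rational. I do not expect a serious obstacle. The main subtlety is noting that Theorem~\ref{thm:vanish} applies to the batch child for \emph{every} enumerated tuple $(z_i)$, because $C'$ does not depend on $z$.
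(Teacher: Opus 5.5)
Your proof is correct and follows the paper's route. Theorem~\ref{thm:vanish} (equivalently Lemma~\ref{lem:gradspan}, which gives $y_i^TQ\in\rowsp C'$ directly because the new basis vectors lie in $\ker C'\subseteq V$) makes the batch child satisfy $\calF=[r]$, so Step~3 or the $r=0$ check ends the path there; your explicit identification $(\ker C')^{\perp}=\rowsp C'$ spells out a step the paper leaves implicit.
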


\emph{The batch eliminates all eigenvalues.} \label{rmk:kills-all}
The condition $\calN=\emptyset$ does \emph{not} imply that $Q|_V$ is positive
semidefinite.  It only says that every adjugate basis vector~$y_i$ has
$y_i^T Qy_i\ge 0$. As a concrete example, suppose
$B_V^T QB_V = \bigl(\begin{smallmatrix}1&2\\2&1\end{smallmatrix}\bigr)$. It has
eigenvalues $3$ and $-1$ but the curvatures of each basis vector are positive
($y_1^T Qy_1=y_2^T Qy_2=1>0$), so $\calN=\emptyset$, yet the form has a
negative eigenvalue.  Lemma~\ref{lem:gradspan} shows that regardless of the sign of the eigenvalues, the quadratic form is identically zero on the remaining kernel.

\subsection{Running time}
It remains to prove the running-time bound.

Equipped with Corollary~\ref{prop:onebatch} establishing that only one batch occurs
per root-to-leaf path, we can now prove the main theorem. The key point is that during the constraint phase $\Delta$ does not grow at all, since $C$ consists of rows of $A$ only. After the batch is added, $\Delta$ does grow, but this no longer matters: the remaining problem is an ILP whose parametric factor depends on $n$ alone.

\begin{proof}[Proof of Theorem~\ref{thm:main}]
Throughout this proof set $\bar m=\max(1,m)$.
First remove duplicate coefficient rows, retaining the smallest
right-hand side, and run the classification test of
Theorem~\ref{thm:unb-main}. Its cost is absorbed by the bound
below because $m\le(2L_A+1)^n$ after this preprocessing.
Infeasible and unbounded instances are returned immediately.
A remaining instance has an attained optimum, since $f$ is
integer-valued on its nonempty integer feasible set. The
branching proof needs this attained optimum, not boundedness of
the feasible polyhedron, so no bounding-box replacement is
needed for the pure algorithm.

We analyze the worst-case branching tree of Algorithm~\ref{alg:batch}.  By Corollary~\ref{prop:onebatch}, any root-to-leaf path passes through three phases: at most $s \le n$ constraint rows, at most one batch of gradient rows, and an ILP leaf.  We sum over the at most $n+1$ possible positions of the batch and
multiply the branching factors for each position. This additional
polynomial factor, and the factor $n+1$ converting leaves to nodes,
are absorbed below.

During the constraint phase, the matrix $C$ consists entirely of rows of~$A$, so every minor of~$C$ is a minor of~$A$ and $\Delta(C) \le \Delta(A)$ throughout.  At each constraint step the algorithm branches over which row $a_j\notin\rowsp(C)$ to add (at most $m$ choices) and which value $b_j'\in\{b_j-nL_A\Delta(A),\ldots,b_j\}$ to assign (at most $nL_A\Delta(A)+1$ choices), giving a branching factor of at most $\bar m\cdot(nL_A\Delta(A)+1)$ per step.  Over at most $n$ constraint steps, the combined branching factor is at most $\bigl[\bar m\cdot(nL_A\Delta(A)+1)\bigr]^n$.

At the batch node, the subdeterminant still satisfies $\Delta(C) \le \Delta(A)$, so each adjugate basis vector has $\|y_i\|_\infty \le \Delta(A)$ and the curvature along each direction satisfies $y_i^T Qy_i \le n^2 L_Q \Delta(A)^2$.  For each $i \in \calG'$, the algorithm enumerates at most $2y_i^T Qy_i + 1$
integer values, so the total number of children at the batch node is at most
$\prod_{i\in\calG'}(2\,y_i^TQy_i+1) \le (3n^2 L_Q\Delta(A)^2)^{n}$.  Each leaf is either a checked single point or an ILP (Corollary~\ref{cor:linear}), solved by fixed-dimension integer linear programming in $n^{O(n)}\cdot\mathrm{poly}(\varphi)$ time. Combining the three contributions, we obtain for the running time:
\[
  \bigl(\bar m\,(nL_A\Delta(A)+1)\bigr)^n
  \;\cdot\;
  (3n^2 L_Q\Delta(A)^2)^{n}
  \;\cdot\;
  n^{O(n)}
  \;\cdot\;
  \mathrm{poly}(\varphi)
    \le (\bar m n\Delta(A)L_A L_Q)^{O(n)} \;\cdot\;
  \mathrm{poly}(\varphi)
\]
There are at most $(2L_A+1)^n$ distinct coefficient rows.
Keeping the tightest right-hand side for each row therefore gives
\begin{equation}\label{eq:complexitybound}
(n\,\Delta(A)\,(2L_A+1)^n\,L_Q)^{O(n)}
\cdot \mathrm{poly}(\varphi).
\end{equation}

Finally, utilizing $\Delta(A)\le (n L_A)^n\le (nL)^n$ and $L = \max\{L_A,L_Q\}$, we get the bound in Theorem \ref{thm:main} in $n$ and $L$ alone:
\begin{equation}\label{eq:nlcomplexitybound}
      (nL)^{O(n^2)}\;\cdot\;\mathrm{poly}(\varphi).
\end{equation}
\end{proof}

\begin{proof}[Proof of Corollary~\ref{cor:tu}]
If $A$ is TU, then $L_A = \Delta(A) =1$. Use the bound before duplicate-row removal with $L_A=\Delta(A)=1$
to obtain $(\max(1,m)nL_Q)^{O(n)}\mathrm{poly}(\varphi)$. There are at most
$3^n$ distinct TU rows, yielding
$2^{O(n^2)}(nL_Q)^{O(n)}\mathrm{poly}(\varphi)$.
\end{proof}
\section{Curvature batching for bounded MIQP}
\label{sec:mixed}

Write $x=(x_I,x_C)$ with $x_I\in\Z^{p}$, $x_C\in\R^{q}$, $n=p+q$, and
consider the node form of the mixed problem~\eqref{eq:miqp0},
\begin{equation}\label{eq:miqp}
  \min\bigl\{f(x)=x^TQx+c^Tx \;:\; Ax\le b,\ Cx=d,\
  x\in\ZR\bigr\},
\end{equation}
with integer original data as in~\eqref{eq:iqp2}. Initially $C$ is
empty; any user-supplied equalities are first included as pairs of
opposite inequalities in $Ax\le b$, and hence in $L_A$ and
$\Delta(A)$. Later $C$ is the integer matrix of independent rows
generated by the algorithm, and $d$ may be rational with the common
denominator specified below. We assume throughout that
$P=\{x:Ax\le b\}$ is a polytope; the feasible set is then a finite
union of compact fibers and the minimum is attained
(unboundedness is treated in Section~\ref{sec:unbounded}). We make
\emph{no convexity assumption on any block of $Q$}.

\begin{theorem}[Mixed curvature batch]\label{thm:m-main}
MIQP~\eqref{eq:miqp0} over a polytope, with initial equalities incorporated as above, can be solved in time
$(nL)^{O(n^{2}(q+1)^{2})}\cdot\mathrm{poly}(\varphi)$. For fixed $q$ this
is $(nL)^{O(n^{2})}\cdot\mathrm{poly}(\varphi)$, and at $q=0$ the
algorithm reduces to the pure case with the same coarse
$(nL)^{O(n^2)}$ bound as Theorem~\ref{thm:main}.
\end{theorem}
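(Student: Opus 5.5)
We mirror the pure algorithm, but directions may only be chosen so that the integer/continuous structure is respected. At each node we work with the equality system $Cx=d$ and split the kernel $V=\ker C$ into two parts. The \emph{continuous fiber} is $V_C=V\cap(\{0\}^p\times\R^q)$. Its complement is the \emph{integer-moving part}: directions whose projection to $x_I$ is nonzero.

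\textbf{Step 1 (continuous fiber, solve first).} For a fixed integer part $x_I$, the fiber problem is a continuous QP in $q$ variables over a polytope. Its optimum is a KKT point. Since no convexity is assumed, we use the classical fact that a nonconvex QP over a polytope attains its minimum at a point determined by an active set $J$ together with the stationarity equations restricted to the face. Concretely: take the minimal face containing $x^*$; on that face, $x^*$ is a stationary point of the restricted quadratic; choosing a minimal such solution gives a linear system with at most $q$ active constraints and at most $q$ gradient rows. We branch over the active set, which gives at most $m^{q}\le(2L_A+1)^{nq}$ choices. We then impose the equations $a_j^Tx=b_j$ for $j\in J$ and the projected stationarity rows $u^TQx+\tfrac12c^Tu=0$ for a basis $u$ of the remaining continuous kernel. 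By Lemma~\ref{lem:adjugate} this basis has entries at most $\Delta(C)$. After this step the continuous variables are affine functions of $x_I$ with a common denominator $D$. By Lemma~\ref{lem:growth}, each stationarity row squares the determinant bound, so the bound becomes $D\le (nL)^{O(n(q+1))}$. The growth is polynomial in the exponent because at most $q$ such rows are stacked on a base of at most $\Delta(A)$; we bound crudely via Hadamard, $(n\cdot nL\Delta)^{n}$.

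\textbf{Step 2 (reduce to a pure IQP in $x_I$).} Eliminating $x_C$ gives a pure-integer quadratic problem in $p$ variables. Its matrices have denominator $D$. Clearing denominators yields integer data with entries $L'\le (nL)^{O(n(q+1))}$, and the problem lives over the projected polytope. The projected constraints come from the original rows composed with the affine substitution, and their subdeterminants are again bounded by $(nL)^{O(n(q+1))}$ by the cofactor argument of Lemma~\ref{lem:growth}. We then apply Theorem~\ref{thm:main} in its coarse form $(pL')^{O(p^2)}$. This gives $\bigl((nL)^{O(n(q+1))}\bigr)^{O(n^2)}$, and with the branching over the $(q+1)$ rounds of active sets/stationarity this yields the stated $(nL)^{O(n^2(q+1)^2)}$. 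The exponent bookkeeping has some slack: one factor $(q+1)$ comes from $L'$ and one from $\Delta$ of the substituted rows, together with the $m^q$ branching. Rationality of $d$ is handled by scaling by $D$.

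\textbf{Main obstacle.} Step 1 carries the risk. We must justify, without convexity, that some optimal mixed point is captured by a branch in which the continuous part is an \emph{affine function of $x_I$ determined by a bounded-size equation system}. The difficulty is that the optimum of a nonconvex fiber QP need not be unique along directions where $Q$ restricted to the face is degenerate. The plan is to choose, among optimal points, one that is a vertex of its optimal face set. Any remaining kernel direction $u$ in the continuous fiber with $u^TQu\ge0$ and zero gradient must then have $u^TQ$ in the row space; otherwise we could move to a face of lower dimension without increasing the objective, using concavity along $u$ if $u^TQu<0$ and flatness if $u^TQu=0$. This moves the point to a boundary and lets the active-set branch absorb it. After iterating at most $q$ times, $Q$ vanishes on the residual continuous kernel, in the same spanning manner as Lemma~\ref{lem:gradspan} and Theorem~\ref{thm:vanish}. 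At $q=0$, Step 1 is empty and the construction is exactly Algorithm~\ref{alg:batch}.
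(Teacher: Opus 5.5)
Your route is to enumerate an active set, write $x_C$ as an affine function of $x_I$, substitute, and call Theorem~\ref{thm:main} as a black box. This differs from the paper's proof and can be made to work, but as written it has two genuine gaps.

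First, Step~1 needs a structural fact that you never prove. The chosen active rows $J$, with linearly independent continuous parts $A_{J,C}$, together with the stationarity rows, must determine $x_C$ uniquely. That is, the continuous block $Q_{CC}$ restricted to $L_J=\ker A_{J,C}$ must be nonsingular. Your ``main obstacle'' paragraph instead argues toward a different endpoint: that $Q$ vanishes on a residual continuous kernel $U^0$.

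If that kernel is nonzero, $x_C$ is not a function of $x_I$ at all. Feasibility of $x_I$ then means that the coset $x+U^0$ meets $P$. This is exactly why the paper projects $P$ along $U^0$ with the extreme-ray family $\calH$ of Lemma~\ref{lem:m-proj}, and your Step~2 omits that projection. The intermediate sentence also reverses roles. Directions with $u^TQ\in\rowsp(C)$ are precisely the ones the stationarity rows fail to pin down, and a direction with $u^TQu\ge0$ cannot invoke concavity.

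The argument you need is the following. Fix $x_I^*$ and, among optimal points of the bounded fiber, take one whose minimal face $F$ has least dimension. This point is critical on $\mathrm{aff}(F)$. Suppose a nonzero continuous $k$ in the direction space of $F$ had $k^TQv=0$ for every such $v$. Then $f$ would be constant along $x^*+tk$, and you could slide to a proper face of $F$ at equal value, contradicting the choice of $F$. Hence the restricted Hessian is nonsingular, indeed positive definite. It then suffices to enumerate the at most $(m+1)^q$ sets $J$ with $Q_{CC}|_{L_J}$ nonsingular. Every branch returns feasible mixed points, and a continuous row basis of the rows tight on $F$ gives a branch that attains the optimum.

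Second, your exponent arithmetic does not give the theorem. Your own expression $\bigl((nL)^{O(n(q+1))}\bigr)^{O(n^2)}$ equals $(nL)^{O(n^3(q+1))}$, which is not $(nL)^{O(n^2(q+1)^2)}$ when $q=o(n)$. For fixed $q$ you get only $(nL)^{O(n^3)}$, so the fixed-$q$ clause fails.

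The loss comes from bounding the continuous-kernel basis by $\Delta(C)\le\Delta(A)\le(nL)^n$ through Lemma~\ref{lem:adjugate}. Since $L_J$ is cut out by the $q$ continuous columns alone, its basis entries are minors of order at most $q$, hence at most $q^{q/2}L_A^q$, as in Lemma~\ref{lem:m-augdelta}. Write $x_C=\Phi x_I+\phi$. Hadamard's inequality on the $q\times q$ system then bounds $D$ and the entries of $D\Phi$ by $(nL)^{O((q+1)^2)}$. Thus $L'=(nL)^{O((q+1)^2)}$, each branch costs $(nL)^{O(p^2(q+1)^2)}\mathrm{poly}(\varphi)$, and the $(nL)^{O(nq)}$ branches give the stated total.

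Your claim that the substituted rows keep subdeterminants $(nL)^{O(n(q+1))}$ is also unjustified. Rows with zero continuous part become $Da_I^T$, so their $k\times k$ minors pick up a factor $D^{k}$. The coarse form of Theorem~\ref{thm:main} does not need this claim, however.

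Repaired this way, your argument bypasses the paper's machinery. The paper never eliminates $x_C$. It keeps the flat kernel $U^0$ left by the stationarity batch, describes coset feasibility by $\calH$ with the common denominator $\delta_0$, and continues branching inside one algorithm. Your version avoids $\calH$ and the $\tfrac1{\delta_0}\Z$ grid by moving the optimum to a face on which the continuous Hessian is nonsingular.
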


Three features distinguish this problem from pure IQP. Continuous
movements lead to exact stationarity equations, branch values may be
fractional, and an integer displacement may require a compensating
continuous movement to preserve feasibility. We address these in order:
a continuous stationarity batch, a common denominator for later branch
values, and a projected family of inequalities describing feasible cosets.
The recursion continues to use the joint variables $(x_I,x_C)$.

\subsection{The continuous phase}
At a node $Cx=d$, let $V=\ker(C)$ and define the \emph{continuous
subkernel}
\[
U \;:=\; V\cap\bigl(\{0\}^{p}\times\R^{q}\bigr)
 \;=\; \ker\!\begin{pmatrix} C\\ E_I\end{pmatrix},
\qquad
E_I := \begin{pmatrix} I_p & 0\end{pmatrix}\in\Z^{p\times n},
\]
of dimension $q'\le q$. Steps along integer vectors $y\in V\cap\Z^{n}$
and along $tu$, $u\in U$, $t\in\R$, both preserve $\ZR$. We adopt the
convention $\Delta(\emptyset)=1$ for an empty matrix.

\begin{lemma}[Augmented subdeterminants; $U$-basis]\label{lem:m-augdelta}
$\Delta\bigl(\begin{smallmatrix}C\\E_I\end{smallmatrix}\bigr)
\le\Delta(C)$. Further, $U$ admits a basis of integer vectors
$u_1,\dots,u_{q'}$ with
$\|u_j\|_\infty\le q^{q/2}E^{q}$ whenever all entries of $C$ are
bounded by $E$.
\end{lemma}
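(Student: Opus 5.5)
The inequality $\Delta\bigl(\begin{smallmatrix}C\\E_I\end{smallmatrix}\bigr)\le\Delta(C)$ should follow by expanding minors along the unit rows. The basis claim should follow from Lemma~\ref{lem:adjugate} applied to the continuous block of $C$, combined with Hadamard's inequality.

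For the first claim, take any square submatrix $M$ of $\bigl(\begin{smallmatrix}C\\E_I\end{smallmatrix}\bigr)$ and split its rows into those from $C$ and those from $E_I$. Each row of $E_I$ is a unit vector $e_i^T$ with $i\le p$, and the following cases cover all possibilities.
\begin{itemize}
\item If the column $i$ is not among the selected columns, that row of $M$ is zero, so $\det M=0$.
\item Otherwise, Laplace expansion along that row gives $\det M=\pm\det M'$. Here $M'$ deletes that row and column $i$.
\item Iterating over all $E_I$ rows leaves $\det M=\pm\det N$ or $0$, where $N$ is a square submatrix of $C$.
\item If every row of $M$ came from $E_I$, then $N$ is empty and $\det N=1$.
\end{itemize}
Since $\Delta(C)\ge1$ by the empty-determinant convention, every case gives $|\det M|\le\Delta(C)$.

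For the basis, write $C=(C_I\;C_C)$ with $C_C\in\Z^{k\times q}$. Then
\[
U=\{(0,x_C): C_Cx_C=0\},
\]
so it suffices to find a small integer basis of $\ker C_C\subseteq\R^q$ and pad it with $p$ zeros. Let $k'=\rank C_C\le q$. If $k'=q$, then $U=\{0\}$ and there is nothing to prove. If $k'=0$, the standard unit vectors work, since $q^{q/2}E^q\ge1$ when $E\ge1$; I take $E\ge1$, which holds because coefficient bounds in the paper are taken with $\max(1,\cdot)$.

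Otherwise, select $k'$ linearly independent rows of $C_C$ to form $\tilde C\in\Z^{k'\times q}$. This matrix has the same kernel as $C_C$ and full row rank $k'<q$. Lemma~\ref{lem:adjugate} then yields $q-k'=q'$ independent integer kernel vectors with $\|\cdot\|_\infty\le\Delta(\tilde C)$. Each $j\times j$ minor of $\tilde C$ has columns of Euclidean norm at most $\sqrt{j}\,E$, so Hadamard's inequality bounds it by $j^{j/2}E^j\le q^{q/2}E^q$ for $j\le k'\le q$. Hence $\Delta(\tilde C)\le q^{q/2}E^q$, and padding preserves both integrality and independence.

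The only delicate point is the reduction to full row rank. Lemma~\ref{lem:adjugate} requires $\rank=k$ equal to the number of rows, and $C_C$ generally fails this even though $C$ has independent rows. Discarding dependent rows of $C_C$ fixes this without changing the kernel. The bound is stated via Hadamard rather than $\Delta(C)$ precisely because minors of $C_C$ need not be controlled any better than this. Everything else is routine.
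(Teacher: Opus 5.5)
Your proposal is correct and follows essentially the same route as the paper's proof. You use Laplace expansion along the unit rows of $E_I$ for the first claim, and for the second you apply Lemma~\ref{lem:adjugate} to a row basis of the continuous block $C_C$, pad with zeros, and bound the minors by Hadamard's inequality. Your explicit handling of the cases $k'=q$ and $k'=0$, and of the tacit requirement $E\ge1$ (needed for $q^{q/2}E^q\ge1$ and for monotonicity in $j$), supplies details the paper leaves implicit.
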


\begin{proof}
For the first claim, expand any square submatrix of
$\binom{C}{E_I}$ by Laplace along its unit rows: each such row either
makes the determinant zero (if its unit column is absent) or reduces
to a smaller submatrix of $C$; iterate. If $q=0$, the basis is empty and the second assertion is
vacuous. For $q\ge1$, $u\in U$ iff
$u_I=0$ and $u_C\in\ker(C_{\cdot C})$, where
$C_{\cdot C}\in\Z^{k\times q}$ is the block of continuous columns of
$C$. Apply Lemma~\ref{lem:adjugate} to a \emph{row basis} of
$C_{\cdot C}$ (same kernel) and pad with zeros; the entries are minors
of $C_{\cdot C}$ of order at most $q$, bounded by Hadamard's
inequality.
\end{proof}
The branching dichotomy has the following continuous analogue.

A feasible $x$ is \emph{$\R$-deep} with respect to $u\in U$ if
$x\pm\varepsilon u$ is feasible for some $\varepsilon>0$;
equivalently, no row of $A$ tight at $x$ has $a_\ell^Tu\neq0$.

\begin{lemma}[Continuous dichotomy]\label{lem:m-dichotomy}
Let $x^{*}$ be optimal for~\eqref{eq:miqp} and $u\in U$. Then exactly
one of:
\begin{itemize}
\item[\emph{(S$_\R$)}] \emph{Blocked}: some row $a_\ell$ with
$a_\ell^Tu\neq 0$ satisfies $a_\ell^Tx^{*}=b_\ell$; necessarily
$a_\ell\notin\rowsp(C)$.
\item[\emph{(G$_\R$)}] \emph{Stationary}: $x^{*}$ is $\R$-deep with
respect to $u$; then $2u^TQx^{*}+c^Tu=0$ and $u^TQu\ge0$.
\end{itemize}
\end{lemma}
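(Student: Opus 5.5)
The plan is to use the equivalence stated in the definition of $\R$-depth, so that (S$_\R$) is exactly the failure of $\R$-depth; the two alternatives are then exclusive and exhaustive by definition. First I check that equivalence. Every point $x^*\pm\varepsilon u$ satisfies $Cx=d$ because $u\in\ker C$. It also stays in $\ZR$ because $u_I=0$. The rows of $A$ that are slack at $x^*$ remain satisfied for all sufficiently small $\varepsilon$, since only finitely many rows are involved. A tight row $a_\ell$ survives both signs iff $\pm\varepsilon a_\ell^Tu\le0$, that is, iff $a_\ell^Tu=0$. Consequently $x^*$ fails to be $\R$-deep exactly when some tight row has $a_\ell^Tu\neq0$, which is the case (S$_\R$). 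In that case $a_\ell\notin\rowsp(C)$, because every row of $\rowsp(C)$ annihilates $u\in V$.

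In case (G$_\R$) I use a one-variable expansion analogous to \eqref{eq:direction-expansion}. Define
\[
g(t)=f(x^*+tu)-f(x^*)=t\,(2u^TQx^*+c^Tu)+t^2\,u^TQu .
\]
For $|t|\le\varepsilon$ the point $x^*+tu$ is feasible: it lies on the segment between the two feasible points $x^*\pm\varepsilon u$, the constraints $Ax\le b$ and $Cx=d$ are convex, and the integer coordinates stay fixed. Optimality of $x^*$ then gives $g(t)\ge0$ on $[-\varepsilon,\varepsilon]$. The linear coefficient must vanish, because otherwise $g$ is negative for small $t$ of the opposite sign. This gives the stationarity equation $2u^TQx^*+c^Tu=0$. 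Once the linear term is zero, $g(t)=t^2u^TQu\ge0$ forces $u^TQu\ge0$, exactly as in Lemma~\ref{lem:deepcurv}.

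The only step that needs care is the feasibility of the whole segment, which requires the integrality argument via $u_I=0$ together with convexity of the polyhedral part. Once that is settled the argument is routine. The polytope assumption plays no role here beyond guaranteeing that $x^*$ exists.
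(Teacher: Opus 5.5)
Your proposal is correct and follows essentially the same route as the paper. Blocked is exactly the failure of $\R$-depth via the tight-row characterization, with $a_\ell\notin\rowsp(C)$ because $\rowsp(C)$ annihilates $u$; in the deep case, $t\mapsto f(x^*+tu)$ must have zero linear term and nonnegative leading coefficient. Your explicit check that the whole segment $x^*+tu$, $|t|\le\varepsilon$, stays feasible (convexity plus $u_I=0$) only spells out what the paper's ``local minimum on an open interval'' uses implicitly.
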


\begin{proof}
If $x^{*}$ is not $\R$-deep with respect to $u$, then some tight row
has $a_\ell^Tu\neq0$ (otherwise all tight rows are preserved and all
slack rows remain slack for small $|t|$, making $x^{*}\pm tu$
feasible); tightness means $a_\ell^Tx^{*}=b_\ell$ exactly, and
$a_\ell\in\rowsp(C)$ would force $a_\ell^Tu=0$ since $u\in\ker(C)$.
If $x^{*}$ is $\R$-deep, then
$\phi(t)=f(x^{*}+tu)=f(x^{*})+t(2u^TQx^{*}+c^Tu)+t^{2}u^TQu$ has a
local minimum at $t=0$ on an open interval, forcing $\phi'(0)=0$; and
$u^TQu<0$ together with $\phi'(0)=0$ would give $\phi(t)<\phi(0)$ for
small $t\neq0$, contradicting optimality.
\end{proof}

\begin{corollary}\label{cor:m-allstationary}
If $x^{*}$ is $\R$-deep with respect to every basis vector
$u_1,\dots,u_{q'}$, then no tight row at $x^{*}$ has a nonzero inner
product with any element of $U$; consequently $x^{*}$ is $\R$-deep
with respect to \emph{every} $u\in U$,
\[
2u^TQx^{*}+c^Tu=0 \ \text{ and } \ u^TQu\ge0
\qquad\text{for all } u\in U,
\]
and in particular $Q|_{U}\succeq0$.
\end{corollary}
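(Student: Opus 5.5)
The plan is to use the characterization of $\R$-deepness given just before Lemma~\ref{lem:m-dichotomy}, then linearity, then Lemma~\ref{lem:m-dichotomy} applied to an arbitrary $u\in U$. Let $T=\{\ell: a_\ell^Tx^*=b_\ell\}$ be the set of rows tight at $x^*$.

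First I show that every tight row annihilates $U$. By hypothesis $x^*$ is $\R$-deep with respect to each $u_j$. By the stated equivalence, no tight row has $a_\ell^Tu_j\ne0$, so $a_\ell^Tu_j=0$ for all $\ell\in T$ and $j\le q'$. Any $u\in U$ can be written $u=\sum_j\alpha_ju_j$ with real $\alpha_j$, because the $u_j$ form a basis (Lemma~\ref{lem:m-augdelta}). Linearity then gives $a_\ell^Tu=0$ for every $\ell\in T$, which is the first assertion.

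Next I show $\R$-deepness for every $u\in U$. Fix $u\in U$. The slack rows satisfy $a_\ell^Tx^*<b_\ell$, and there are finitely many of them, so they stay strict for $|t|$ small. The tight rows are unchanged along $tu$ by the previous step. The equations $Cx=d$ are preserved because $u\in\ker C$. Integrality is preserved because $u_I=0$, so $x^*\pm tu\in\ZR$. Hence $x^*\pm\varepsilon u$ is feasible for small $\varepsilon>0$, and $x^*$ is $\R$-deep with respect to $u$. Equivalently, alternative (S$_\R$) of Lemma~\ref{lem:m-dichotomy} fails, so (G$_\R$) holds. That gives $2u^TQx^*+c^Tu=0$ and $u^TQu\ge0$ for this $u$. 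Since $u$ was arbitrary, both identities hold on all of $U$.

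Finally, $u^TQu\ge0$ for all $u\in U$ is exactly $Q|_U\succeq0$. The point worth stressing is that this holds on all of $U$, not merely on the basis vectors; that is what distinguishes the continuous case from the pure remark about $\calN=\emptyset$. No step is a real obstacle. The only care needed is to apply the dichotomy to the arbitrary vector $u$ rather than just the basis vectors, and to note that feasibility includes the $\ZR$ and $Cx=d$ constraints, both handled by $u\in U$.
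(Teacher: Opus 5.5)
Your proposal is correct and follows the paper's proof. Tight rows annihilate each $u_j$, hence all of $U=\Span\{u_j\}$ by linearity; this gives $\R$-deepness along every $u\in U$, so Lemma~\ref{lem:m-dichotomy}(G$_\R$) applies to each $u$. Your explicit check that $Cx=d$ and membership in $\ZR$ are preserved along $tu$ is a detail the paper leaves implicit in the definition of $U$.
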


\begin{proof}
$\R$-deepness with respect to $u_j$ excludes tight rows with
$a_\ell^Tu_j\neq0$; over all $j$, tight rows vanish on
$\Span\{u_j\}=U$, giving $\R$-deepness along every $u\in U$.
Lemma~\ref{lem:m-dichotomy}(G$_\R$) applies to each $u$; alternatively
stationarity for all $u$ follows from the basis case by linearity in
$u$.
\end{proof}

Thus if $Q|_{U}\not\succeq0$, decidable by a square-root-free
$LDL^{T}$ factorization of $B_U^TQB_U$, then every optimal solution
is blocked along some continuous direction and only (S$_\R$) branches
are required: the continuous analogue of
Corollary~\ref{lem:negshallow}.
When every optimal solution is stationary along $U$, all
stationarity rows are appended in one step.

\begin{definition}[Stationarity batch]\label{def:m-cs}
Let $u_1,\dots,u_{q'}$ be the basis of Lemma~\ref{lem:m-augdelta}.
First check consistency of $Cx=d$ together with all equations
$2u_j^TQx=-c^Tu_j$, $j\in[q']$; if inconsistent, discard this child.
Otherwise choose $J\subseteq[q']$ maximal such that
$\{u_j^TQ\}_{j\in J}$ is linearly independent modulo $\rowsp(C)$,
and append to $Cx=d$ only the rows
\[
2u_j^TQ\,x \;=\; -\,c^Tu_j, \qquad j\in J .
\]
\end{definition}

All $|J|\le q'$ rows are appended in one step; this single Laplace
expansion, rather than an iterated per-direction recursion, is what
prevents repeated squaring of $\Delta$ (see the proof of
Theorem~\ref{thm:m-main}).

\begin{lemma}[$U$-flatness]\label{lem:m-flat}
Let $C^{0}x=d^{0}$ denote the system after the batch, with
$V^{0}=\ker(C^{0})$ and $U^{0}=V^{0}\cap(\{0\}^p\times\R^q)$. Then:
\begin{itemize}
\item[(a)] \emph{(Unconditional.)} $u^TQ\in\rowsp(C^{0})$ for every
$u\in U$; hence $u^TQv=0$ for all $u\in U$, $v\in V^{0}$, and
$Q|_{U^{0}}=0$.
\item[(b)] \emph{(Every retained batch.)}
$2u^TQx+c^Tu\equiv0$ on the affine set for every $u\in U$, and $f$ is
$U^{0}$-invariant there: $x,x'$ in the affine set with $x_I=x'_I$
satisfy $f(x)=f(x')$.
\end{itemize}
\end{lemma}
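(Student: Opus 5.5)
The plan mirrors the pure-case spanning argument (Lemma~\ref{lem:gradspan} and Theorem~\ref{thm:vanish}), applied to the continuous subkernel $U$ in place of $V$.

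\textbf{Part (a).} Write $C^{0}=\bigl[\begin{smallmatrix}C\\H\end{smallmatrix}\bigr]$, where $H$ has rows $2u_j^TQ$ for $j\in J$. Then
\[
\rowsp(C^{0})=\rowsp(C)+\Span\{u_j^TQ:j\in J\}.
\]
By maximality of $J$ in Definition~\ref{def:m-cs}, every $u_j^TQ$ with $j\in[q']$ lies in this space. This holds both for $j\notin J$ and for the indices whose rows already lie in $\rowsp(C)$. Since $u_1,\dots,u_{q'}$ span $U$ (Lemma~\ref{lem:m-augdelta}), linearity gives $u^TQ\in\rowsp(C^{0})$ for every $u\in U$.

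Now take $v\in V^{0}=\ker C^{0}$. Then $u^TQv=0$, because $v$ is annihilated by every row of $C^{0}$. Finally, $U^{0}\subseteq V^{0}$, and $U^{0}\subseteq U$ because $V^{0}\subseteq V$. Hence $Q|_{U^{0}}=0$.

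Note that (a) is unconditional: no sign hypothesis on $Q|_U$ enters. This is the analogue of the remark after Corollary~\ref{prop:onebatch}.

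\textbf{Part (b), first claim.} The batch is retained only when $Cx=d$ is consistent with all $q'$ equations $2u_j^TQx=-c^Tu_j$. The step I expect to need care is showing that imposing only the rows indexed by $J$ already forces the omitted equations on the affine set $\{C^{0}x=d^{0}\}$.

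Fix $j\in[q']$. Write $u_j^TQ=\lambda^TC+\sum_{i\in J}\mu_i u_i^TQ$, so that
\[
2u_j^TQx=2\lambda^Td+\sum_{i\in J}\mu_i(-c^Tu_i)
\]
is constant on the affine set. To identify that constant, evaluate at a point $\hat x$ satisfying \emph{all} $q'$ equations; one exists by the consistency check, and it lies in the affine set. This gives $2u_j^TQx+c^Tu_j=0$ for $x=\hat x$. The constant therefore equals $-c^Tu_j$, and $2u_j^TQx+c^Tu_j\equiv0$ throughout the affine set. Extending from the basis vectors $u_j$ to all $u\in U$ is immediate, since the expression $2u^TQx+c^Tu$ is linear in $u$.

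\textbf{Part (b), invariance.} Take $x,x'$ in the affine set with $x_I=x'_I$, and put $w=x'-x$. Then $w\in V^{0}$ and $w_I=0$, so $w\in U^{0}\subseteq U$. Expanding,
\[
f(x')-f(x)=(2w^TQx+c^Tw)+w^TQw.
\]
The first term vanishes by the identity just proved, applied with $u=w\in U$. The second vanishes by (a), since $w\in U^{0}$ and $Q|_{U^{0}}=0$. Hence $f(x')=f(x)$.
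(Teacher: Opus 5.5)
Your proposal is correct and follows the paper's proof essentially step for step. Part (a) uses maximality of $J$ together with the spanning basis $u_1,\dots,u_{q'}$; part (b) notes that each $2u^TQx$ is constant on $\{C^0x=d^0\}$, evaluates it at the point supplied by the consistency check, and expands $f(x')-f(x)$ along $w=x'-x\in U^0$. Your explicit observation that $U^0\subseteq U$ (because $V^0\subseteq V$) fills in a step the paper leaves implicit.
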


\begin{proof}
(a) For $u=\sum_j\beta_ju_j\in U$: rows with $j\in J$ lie in
$\rowsp(C^{0})$ by construction; for $j\notin J$, maximality gives
$u_j^TQ\in\rowsp(C)+\Span\{u_i^TQ:i\in J\}\subseteq\rowsp(C^{0})$.
Membership in $\rowsp(C^{0})$ gives $u^{T}Qv=0$ for every
$v\in\ker(C^{0})$, and
$U^{0}\subseteq V^{0}$ gives $Q|_{U^{0}}=0$.

(b) The consistency check supplies a point $\bar{x}$ satisfying all
stationarity equations and $Cx=d$, hence also $C^0x=d^0$.
By (a), each functional $x\mapsto 2u^TQx$ is constant on the
affine set; its constant value is its value at $\bar{x}$, namely
$-c^Tu$ (by linearity in $u$). For the invariance:
$x-x'\in\ker(C^{0})$ with zero integer part means $x-x'\in U^{0}$,
and
$f(x)-f(x')=(2(x-x')^TQx'+c^T(x-x'))+(x-x')^TQ(x-x')=0+0$ by the
constancy and (a).
\end{proof}

The consistency check preserves the branch containing a fixed optimal
$x^{*}$ that reaches the batch, since $x^{*}$ satisfies every
stationarity equation (Corollary~\ref{cor:m-allstationary}).
It also ensures that (b) holds on every retained batch child.
Retaining an independent row basis preserves the full affine solution
set because the full stationarity system was checked for consistency. We call the node after the batch, or any node with
$U=\{0\}$, the \emph{continuous-flat node} of its path, and denote its data
$(C^{0},d^{0},U^{0})$; this terminology refers only to continuous fiber directions, and
the objective may still be quadratic in the other directions.
The vector $d^{0}$ is \emph{integral}, since the
appended right-hand sides $-c^Tu_j$ are integers and all earlier
right-hand sides are entries of $b$ and $d$.

\subsection{Projection and common-denominator discretization}

The pure algorithm branches on integer values of integer functionals
at $x^{*}$; with $x^{*}_C$ real this fails. The following two lemmas
restore a discrete value set with \emph{one common denominator fixed
at the continuous-flat node}.

\begin{lemma}[All later rows vanish on $U^{0}$]\label{lem:m-vanish}
Every row appended at a descendant of the continuous-flat node, projected
rows $h$ (Lemma~\ref{lem:m-proj}) and gradient rows $2y^TQ$ with
$y\in\ker(C_{\mathrm{cur}})$, vanishes on $U^{0}$. Consequently
$U=U^{0}$ at every descendant, and Lemma~\ref{lem:m-flat}(a)
persists.
\end{lemma}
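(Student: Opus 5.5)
We treat the two kinds of appended rows separately and use Lemma~\ref{lem:m-flat}(a) as the common engine. Two facts from there do all the work. First, $U^0\subseteq U$, since $U^0=V^0\cap(\{0\}^p\times\R^q)$ and $V^0\subseteq V$. Second, $u^TQv=0$ for all $u\in U$ and $v\in V^0$. We run an induction down the path from the continuous-flat node. The inductive hypothesis at the current node $C_{\mathrm{cur}}$ is that every row appended so far vanishes on $U^0$. Together with $U^0\subseteq\ker(C^0)$, this gives $U^0\subseteq\ker(C_{\mathrm{cur}})$. Since $\ker(C_{\mathrm{cur}})\subseteq V^0$ trivially, the current continuous subkernel $U_{\mathrm{cur}}=\ker(C_{\mathrm{cur}})\cap(\{0\}^p\times\R^q)$ is sandwiched between $U^0$ and $U^0$. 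Hence $U_{\mathrm{cur}}=U^0$.

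\emph{Gradient rows.} Take $y\in\ker(C_{\mathrm{cur}})\subseteq V^0$ and $u\in U^0$. Then $(2y^TQ)u=2u^TQy$, using symmetry of $Q$. Since $u\in U^0\subseteq U$ and $y\in V^0$, Lemma~\ref{lem:m-flat}(a) gives $u^TQy=0$. So every gradient row vanishes on $U^0$. This works for any $y$ in the current kernel, whether it is an adjugate basis vector or not, and it does not depend on the depth of the node.

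\emph{Projected rows.} Here I expect the main obstacle, because Lemma~\ref{lem:m-proj} has not yet been stated. I would rely on the property that a projected row $h$ is, by construction, a combination $h=\lambda^TA+\mu^TC_{\mathrm{cur}}$ chosen to eliminate the continuous directions of the current fiber. In other words, $h$ annihilates $U_{\mathrm{cur}}$: it describes the feasible cosets $x_I$ after the continuous coordinates are projected out, so it depends only on the integer part modulo the current equalities. By the induction hypothesis $U_{\mathrm{cur}}=U^0$, so $h$ vanishes on $U^0$. If the construction instead only guarantees that $h$ has zero continuous block, $h=(h_I,0)$, the conclusion is immediate, since vectors in $U^0$ have $u_I=0$. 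I would check which form Lemma~\ref{lem:m-proj} provides and cite it; either form suffices.

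\emph{Persistence.} Finally, $\rowsp(C^0)\subseteq\rowsp(C_{\mathrm{cur}})$. Hence $u^TQ\in\rowsp(C_{\mathrm{cur}})$ for all $u\in U$, and $u^TQv=0$ for all $v\in\ker(C_{\mathrm{cur}})$. We have also shown $U_{\mathrm{cur}}=U^0$, so $Q|_{U_{\mathrm{cur}}}=0$. This is Lemma~\ref{lem:m-flat}(a) at every descendant, which closes the induction.
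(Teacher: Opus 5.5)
Your proof is correct and follows the paper's argument. Gradient rows vanish on $U^{0}$ because $u^TQ\in\rowsp(C^{0})$ by Lemma~\ref{lem:m-flat}(a) while $y\in\ker(C_{\mathrm{cur}})\subseteq\ker(C^{0})$, and the sandwich $U^{0}\subseteq U\subseteq U^{0}$ follows exactly as you describe. For projected rows the paper simply cites Lemma~\ref{lem:m-proj}(ii): each $h=A^T\lambda$ uses a multiplier $\lambda$ from the cone $\{\lambda\ge0:\lambda^TAu=0\ \forall u\in U^{0}\}$, which is fixed once at the continuous-flat node. Vanishing on $U^{0}$ is therefore the defining condition. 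Neither your induction hypothesis nor your guessed forms ($h$ involving $C_{\mathrm{cur}}$, or $h$ with zero continuous block) are needed.
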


\begin{proof}
The rows $h$ vanish on $U^{0}$ by construction
(Lemma~\ref{lem:m-proj}(ii)). For a gradient row and $u\in U^{0}$:
$(2y^TQ)u=2\,u^TQy=0$, since $u^TQ\in\rowsp(C^{0})$ by
Lemma~\ref{lem:m-flat}(a) and
$y\in\ker(C_{\mathrm{cur}})\subseteq\ker(C^{0})$. Since appended rows
only enlarge $\rowsp(C)$ while vanishing on $U^{0}$, we get
$U^{0}\subseteq U\subseteq U^{0}$ at every descendant.
\end{proof}

\begin{lemma}[Common-denominator discretization]\label{lem:m-cramer}
At the continuous-flat node select independent rows of $\binom{C^0}{E_I}$
forming a row basis $R$. Choose a nonsingular square column
submatrix $M$ of $R$, and set
$\delta_0:=|\det M|\le\Delta(C^0)$.
If the rank is zero, take the empty matrix and $\delta_0=1$.
Let $\ell\in\Z^{n}$ satisfy $\ell^Tu=0$ for all $u\in U^{0}$, and let
$x^{*}$ satisfy $C^{0}x^{*}=d^{0}$ with $x^{*}_I\in\Z^{p}$. Then
\[
\ell^Tx^{*}\;\in\;\tfrac{1}{\delta_0}\,\Z .
\]
In particular this applies, with the \emph{same} $\delta_0$, to every
projected row $h$ and every gradient functional $2y^TQ$ arising at any
descendant of the continuous-flat node, evaluated at any optimal solution on
that branch.
\end{lemma}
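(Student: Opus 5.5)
The plan is a Cramer's-rule argument on the augmented system that fixes the integer coordinates. Set $D:=\binom{C^0}{E_I}$. Then $\ker D=U^{0}$, and $x^{*}$ satisfies $Dx^{*}=g$ with $g:=(d^{0},x^{*}_I)$. This right-hand side is integral: $d^{0}$ is integral (shown after Lemma~\ref{lem:m-flat}) and $x^{*}_I\in\Z^p$.

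First, I will express $\ell$ in terms of the row basis. Since $\ell^Tu=0$ for all $u\in U^{0}=\ker D$, we have $\ell\in(\ker D)^\perp=\rowsp(D)=\rowsp(R)$. So $\ell^T=\lambda^TR$ for a unique real vector $\lambda$, by independence of the rows of $R$. Consequently
\[
 \ell^Tx^{*}=\lambda^TRx^{*}=\lambda^Tg_R,
\]
where $g_R$ is the subvector of $g$ indexed by the rows of $R$. It is integral. The rank-zero case is trivial: then $\ell=0$.

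Second, I will bound the denominator of $\lambda$. Restrict $\ell^T=\lambda^TR$ to the columns of $M$ to get $\ell_M^T=\lambda^TM$. Since $M$ is square and nonsingular, $\lambda^T=\ell_M^TM^{-1}=\ell_M^T\,\mathrm{adj}(M)/\det M$. As $\ell_M$ and $\mathrm{adj}(M)$ are integral, $\lambda\in\frac{1}{\delta_0}\Z^{\rank D}$. Hence $\ell^Tx^{*}=\lambda^Tg_R\in\frac1{\delta_0}\Z$.

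Third, I will check $\delta_0\le\Delta(C^0)$. $M$ is a square submatrix of $D$, so Lemma~\ref{lem:m-augdelta} gives $|\det M|\le\Delta(D)\le\Delta(C^0)$.

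For the ``in particular'' clause, I will argue as follows. By Lemma~\ref{lem:m-vanish}, every projected row $h$ and every gradient row $2y^TQ$ appended at a descendant vanishes on $U^{0}$, and these rows are integral. Any optimal solution on such a branch still satisfies $C^{0}x=d^{0}$, since descendants only append rows, and it has integer $x_I$. So the argument above applies verbatim with the same $R$, $M$, and $\delta_0$ fixed at the continuous-flat node.

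The main subtlety is the first step: one must use $\rowsp(R)=\rowsp(D)$ (because $R$ is a row basis) rather than just $\rowsp(C^0)$. Including $E_I$ is exactly what allows $x^{*}_I$ to enter as integral data. Beyond that, the argument is routine.
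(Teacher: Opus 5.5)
Your argument is correct, and it is the transpose of the paper's.

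The paper works on the primal side. It builds a basic solution $\hat x$ of $\binom{C^0}{E_I}x=\binom{d^0}{x^*_I}$ by setting the nonbasic coordinates to zero and applying Cramer's rule to $M$, so $\hat x\in\frac1{\delta_0}\Z^n$. It must then check that $\hat x$ also satisfies the omitted rows, using consistency of the full system (which contains $x^*$). This gives $\hat x-x^*\in U^0$ and hence $\ell^T\hat x=\ell^Tx^*$.

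You work on the dual side. From $\ell\in(U^0)^\perp=\rowsp(R)$ you obtain multipliers $\lambda^T=\ell_M^TM^{-1}$ with entries in $\frac1{\delta_0}\Z$, and you evaluate $\lambda^TRx^*=\lambda^Tg_R$ directly at $x^*$. The two computations give the same number, since $\lambda^Tg_R=\ell_M^TM^{-1}g_R=\ell^T\hat x$.

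Your route needs no auxiliary point and no consistency check for the omitted rows. Because $\lambda$ depends only on $\ell$ and $M$, not on the right-hand side, the ``in particular'' clause is immediate: the same $\delta_0$ works at every descendant and for every optimal point. The paper's route instead exhibits one rational point of the fiber, with denominator $\delta_0$, that serves every admissible $\ell$ at once.
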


\begin{proof}
The solution set of $\binom{C^{0}}{E_I}x=\binom{d^{0}}{x^{*}_I}$ is
$x^{*}+U^{0}$, and $\ell$ is constant on it, so
$\ell^Tx^{*}=\ell^T\hat{x}$ for the solution $\hat{x}$ obtained
by retaining the selected independent equations, setting nonbasic
coordinates to zero, and applying Cramer's rule to $M$.
The omitted equations hold as well: the full system is consistent
(it contains $x^*$), and its omitted rows are linear combinations
of the selected rows with the same combinations on the right-hand
sides. The right-hand side $\binom{d^{0}}{x^{*}_I}$ is
integral (Lemma~\ref{lem:m-flat} discussion; $x^{*}_I\in\Z^p$), so
$\hat{x}\in\tfrac{1}{\delta_0}\Z^{n}$ and
$\ell^T\hat{x}\in\tfrac{1}{\delta_0}\Z$. The bound
$\delta_0\le\Delta(C^{0})$ is Lemma~\ref{lem:m-augdelta}. The "in
particular" clause combines Lemma~\ref{lem:m-vanish} (the
functionals vanish on $U^{0}$) with the fact that any point of a
descendant's affine set satisfies $C^{0}x=d^{0}$.
\end{proof}

Accordingly, descendants of the continuous-flat node carry an \emph{integer}
constraint matrix and a rational right-hand-side vector all of whose
entries lie in $\tfrac{1}{\delta_0}\Z$; appended rows are never
scaled, so $\Delta(C)$ is always computed on integer data and grows
only through the entries of the appended rows.
With the denominator fixed, integer-direction branching proceeds at
the descendants after the continuous phase.

At a descendant of the continuous-flat node, on the branch of a fixed optimal
$x^{*}$, call $x^{*}$ \emph{$I$-deep} with respect to an integer basis vector $y$ of $V$ (Lemma~\ref{lem:adjugate}) if both cosets
$x^{*}\pm y+U^{0}$ contain points satisfying $Ax\le b$, and
\emph{$I$-shallow} otherwise. (Such points automatically satisfy all
original equalities and mixed-integrality, since
$y\in\ker(C_{\mathrm{cur}})$, $u_I=0$, $y_I\in\Z^{p}$.)

\begin{lemma}[Exact gradient identity on cosets]\label{lem:m-gradid}
On $x^{*}$'s branch, for any $y\in V\cap\Z^{n}$ and any $u\in U^{0}$,
\[
f(x^{*}+y+u)-f(x^{*})
= \bigl(2y^TQx^{*}+c^Ty\bigr) + y^TQy ,
\]
independently of $u$. Hence if $x^{*}$ is $I$-deep with respect to
$y$, then $y^TQy\ge0$ and
\[
\bigl|\,2y^TQx^{*}+c^Ty\,\bigr| \le y^TQy,
\qquad
2y^TQx^{*}+c^Ty \in \tfrac{1}{\delta_0}\Z .
\]
If instead $y^TQy<0$, then $x^{*}$ is $I$-shallow with respect to
$y$.
\end{lemma}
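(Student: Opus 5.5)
The plan is to expand $f$ directly, then get the interval bounds from optimality over the two cosets, and get the denominator claim from Lemma~\ref{lem:m-cramer}.

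\emph{Expansion.} For $w=y+u$, the identity~\eqref{eq:direction-expansion} gives
\[
f(x^*+w)-f(x^*)=(2w^TQx^*+c^Tw)+w^TQw .
\]
I split each term into its $y$-part and $u$-part. The point $x^*$ lies in the affine set of the current node, so it satisfies $C^0x=d^0$. Lemma~\ref{lem:m-flat}(b) then gives $2u^TQx^*+c^Tu=0$. Here $u\in U^0$ and $U^0\subseteq U$ at the continuous-flat node; by Lemma~\ref{lem:m-vanish}, $U$ stays equal to $U^0$ at descendants.

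For the quadratic part, $w^TQw=y^TQy+2u^TQy+u^TQu$. The vector $y$ lies in $V=\ker(C_{\mathrm{cur}})\subseteq\ker(C^0)=V^0$. Lemma~\ref{lem:m-flat}(a), which persists by Lemma~\ref{lem:m-vanish}, gives $u^TQv=0$ for every $v\in V^0$. Hence $u^TQy=0$, and $u^TQu=0$ because $Q|_{U^0}=0$. The displayed identity follows, independently of $u$.

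\emph{Deep case.} Suppose $x^*$ is $I$-deep with respect to $y$. Then there are $u_\pm\in U^0$ with $A(x^*\pm y+u_\pm)\le b$. These points satisfy the current equalities, since $y,u_\pm\in\ker C_{\mathrm{cur}}$. They are also mixed-integer, since $y_I\in\Z^p$ and $(u_\pm)_I=0$. So both are feasible for the node problem on which $x^*$ is optimal.

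Applying the identity to $y$ and to $-y$ gives $\pm g+y^TQy\ge0$, where $g=2y^TQx^*+c^Ty$. Adding the two inequalities gives $y^TQy\ge0$, and together they give $|g|\le y^TQy$. The last claim is the contrapositive: if $y^TQy<0$, then $x^*$ cannot be $I$-deep.

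\emph{Denominator.} The gradient functional $\ell=2Qy$ vanishes on $U^0$ by Lemma~\ref{lem:m-vanish}. Lemma~\ref{lem:m-cramer} therefore gives $2y^TQx^*\in\frac1{\delta_0}\Z$. Since $c^Ty\in\Z$, it follows that $g\in\frac1{\delta_0}\Z$.

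The main obstacle is bookkeeping rather than computation. I must check that $x^*$ satisfies the stationarity identity of Lemma~\ref{lem:m-flat}(b) at descendants, not only at the continuous-flat node. This holds because descendant affine sets are subsets of the post-batch affine set. I must also check that optimality of $x^*$ is meant relative to the current node's feasible set, which contains the coset points.
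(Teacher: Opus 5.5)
Your proposal is correct and follows the paper's proof essentially step for step. You expand $f$ along $y+u$, remove the stationarity term with Lemma~\ref{lem:m-flat}(b), and remove the cross and pure-$u$ terms with Lemma~\ref{lem:m-flat}(a) together with $\ker(C_{\mathrm{cur}})\subseteq\ker(C^0)$; the two possibly different coset witnesses $u_\pm$ give the sandwich, and the denominator comes from Lemma~\ref{lem:m-cramer} with $\ell=2Qy$ and $c^Ty\in\Z$. The only cosmetic difference is that you get stationarity at $x^*$ from Lemma~\ref{lem:m-flat}(b) alone, which is valid on any retained batch child and trivial when $U^0=\{0\}$, whereas the paper also cites Corollary~\ref{cor:m-allstationary}; either suffices.
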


\begin{proof}
Expand $f(x^{*}+y+u)-f(x^{*})$ and use, on $x^{*}$'s branch:
$2u^TQx^{*}+c^Tu=0$ (Lemma~\ref{lem:m-flat}(b) with
Corollary~\ref{cor:m-allstationary}), $y^TQu=u^TQy=0$
(Lemma~\ref{lem:m-flat}(a) with
$y\in\ker(C_{\mathrm{cur}})\subseteq\ker(C^{0})$), and $u^TQu=0$.
The identity is independent of $u$, so the two coset witnesses
$u_{+},u_{-}$ (possibly different) give
$\pm(2y^TQx^{*}+c^Ty)+y^TQy\ge0$; adding yields $y^TQy\ge0$ and
combining yields the sandwich. Membership in $\tfrac1{\delta_0}\Z$ is
Lemma~\ref{lem:m-cramer} with $\ell=2Qy$ (note $c^Ty\in\Z$). The
last claim is the contrapositive of $y^TQy\ge0$ under $I$-deepness.
\end{proof}

\emph{Why $U^{0}$-cosets are forced.} One might hope to use cosets of the larger pre-step subkernel $U$;
this fails: for $u\in U\setminus U^{0}$ the identity acquires the
term $u^TQu\ge0$ on the wrong side of the sandwich, and the witness's
curvature is uncontrolled. The batch manufactures exactly the
subspace on which $Q$ vanishes identically
(Lemma~\ref{lem:m-flat}(a)), which is what the deep-point argument
requires.

For $I$-shallow solutions we use valid inequalities for coset
feasibility, obtained by projecting out $U^{0}$ \emph{once, at the
continuous-flat node}:

\begin{lemma}[Projected constraint family]\label{lem:m-proj}
Write $q_0=\dim U^0$. Let $D\in\Z^{k_D\times n}$ collect defining rows for $U^{0}$ (the
continuous-column blocks of $C^{0}$ padded to length $n$), so that
$U^{0}=\{u:u_I=0,\ Du=0\}$, with entries of $D$ bounded by
$(nL)^{O(q+1)}$. Choose a nonnegative integer generator $\lambda$ for each extreme
ray of the pointed cone
$\{\lambda\ge0:\lambda^TAu=0\ \forall u\in U^{0}\}$, using the
Cramer construction below. Let $\calH$ be the family of inequalities
$(h,g_h)=(A^T\lambda,\lambda^Tb)$ indexed by these generators.
Repeated left-hand sides may be retained with their own right-hand
sides. Each generator has $|\mathrm{supp}(\lambda)|\le q_0+1$.
Both $h$ and $g_h$ use the same scaling. Then:
\begin{itemize}
\item[(i)] For $\xi$ with $A\xi\le b$ possibly violated:
the coset $\xi+U^{0}$ contains a point of $P$ iff
$h^T\xi\le g_h:=\lambda^Tb$ for all $h\in \calH$.
\item[(ii)] Every $h\in \calH$ vanishes on $U^{0}$, and
$\|h\|_\infty\le(nL)^{O((q+1)^{2})}$.
\item[(iii)] If, on $x^{*}$'s branch, $x^{*}$ is $I$-shallow with
respect to a basis vector $y$, then some $h\in \calH$ satisfies
\[
h\notin\rowsp(C_{\mathrm{cur}}),\qquad
h^Tx^{*}\in\bigl[g_h-\|h\|_1\|y\|_\infty,\ g_h\bigr]
\cap\tfrac{1}{\delta_0}\Z .
\]
\end{itemize}
\end{lemma}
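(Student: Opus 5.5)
The plan is to derive the family $\calH$ from Fourier--Motzkin-type projection, in the Farkas form. Write $P=\{x:Ax\le b\}$.

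\emph{Part (i).} The coset $\xi+U^0$ meets $P$ iff there is $u\in U^0$ with $Au\le b-A\xi$. Parametrize $U^0$ by a basis matrix $B_U$, so $u=B_Ut$ with $t\in\R^{q_0}$. By Farkas' lemma, the system $AB_Ut\le b-A\xi$ is infeasible iff there is $\lambda\ge0$ with $\lambda^TAB_U=0$ and $\lambda^T(b-A\xi)<0$. The set of such multipliers is the cone $K=\{\lambda\ge0:\lambda^TAu=0\ \forall u\in U^0\}$. It lies in the nonnegative orthant, so it is pointed and generated by its extreme rays. Hence feasibility is equivalent to $\lambda^TA\xi\le\lambda^Tb$ on the extreme-ray generators, which is exactly $h^T\xi\le g_h$ for all $h\in\calH$. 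Necessity is immediate: if $x=\xi+u\in P$, then $h^T\xi=\lambda^TAx\le\lambda^Tb$, because $\lambda^TAu=0$.

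\emph{Part (ii) and the generator bounds.} That each $h$ vanishes on $U^0$ is the defining property of $K$. For the support bound, $K$ is cut out in $\R^m$ by the $q_0$ equations $\lambda^TAB_U=0$ together with $\lambda\ge0$. An extreme ray therefore has $m-1$ linearly independent active constraints, and at most $q_0$ of these can be the equations. So at least $m-1-q_0$ coordinates vanish and $|\mathrm{supp}\lambda|\le q_0+1$. On its support $S$, $\lambda$ spans the one-dimensional kernel of $(AB_U)_{S,\cdot}^T$. Taking the Cramer/adjugate vector of Lemma~\ref{lem:adjugate} and flipping sign as needed gives a nonnegative integer generator whose entries are $q_0\times q_0$ minors of $AB_U$.

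For the entry bounds, I would take $B_U$ from Lemma~\ref{lem:m-augdelta} applied to $D$. Its entries are at most $q^{q/2}E^q$ with $E=\|D\|_\infty$. Assuming $\|D\|_\infty\le(nL)^{O(q+1)}$, the entries are $(nL)^{O((q+1)^2)}$. Then $AB_U$ has entries $(nL)^{O((q+1)^2)}$, and by Hadamard its $q_0$-minors are $(nL)^{O(q(q+1)^2)}$.

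This exceeds the claimed exponent. To fix it, I would instead write $\lambda$ via minors of $A$ restricted to a complement: $\lambda^TAu=0$ for $u\in U^0$ means $\lambda^TA_{\cdot C}$ lies in the row space of $D_{\cdot C}$. Equivalently, $\lambda^TA_{\cdot C}$ annihilates the integer kernel basis, and one can choose the kernel basis with entries bounded by $q$-minors of $D$. The precise bookkeeping of exponents is the \textbf{main obstacle}. I would first try to bound everything by $(nL)^{O((q+1)^2)}$, using the fact that the minors of the product $A_{\cdot C}B$ factor by Cauchy--Binet into minors of size at most $q$. Then $\|h\|_\infty\le m\|\lambda\|_\infty L_A$, and $m$ is polynomially bounded after duplicate removal. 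Separately, I must check the bound on $D$ itself from the growth of $C^0$ through the batch, by Lemma~\ref{lem:growth}-type Laplace expansion, using that the continuous stationarity rows $2u_j^TQ$ have entries $(nL)^{O(q+1)}$.

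\emph{Part (iii).} Suppose $x^*$ is $I$-shallow for $y$. Then for one sign, say $+$, the coset $x^*+y+U^0$ misses $P$. By (i), some $h\in\calH$ has $h^T(x^*+y)>g_h$. Meanwhile $x^*\in P$ gives $h^Tx^*\le g_h$, so $g_h-\|h\|_1\|y\|_\infty\le g_h-h^Ty<h^Tx^*\le g_h$. The same inequalities show $h^Ty\neq0$. Since $y\in\ker(C_{\mathrm{cur}})$, it follows that $h\notin\rowsp(C_{\mathrm{cur}})$. Finally, $h\in\Z^n$ vanishes on $U^0$ and $x^*$ lies on a descendant, so Lemma~\ref{lem:m-cramer} gives $h^Tx^*\in\tfrac1{\delta_0}\Z$.
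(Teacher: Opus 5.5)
\textbf{There is a genuine gap in the entry bound of (ii).} Your part (i), part (iii), the vanishing of $h$ on $U^0$, and the support count $|\mathrm{supp}\,\lambda|\le q_0+1$ are correct and follow the paper's argument. The entry bound is the lemma's real quantitative content, and you leave it as the ``main obstacle.''

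Your completed route through a basis $B_U$ of $U^0$ gives $(nL)^{O(q(q+1)^2)}$, as you note. The Cauchy--Binet repair does not close this as stated. It writes each minor of $A_{S,C}B$ as a sum of products $\det(A_{S',T})\det(B_{T,J})$. But the minors of the kernel basis are themselves large: the adjugate basis has the block $\delta I$, so it has minors equal to $\delta^{j}$, where $\delta$ is a pivot minor of $D_C$. You therefore get back the same inflated exponent, unless you also prove, via complementary-minor identities, that a common factor $\delta^{|S|-2}$ can be divided out.

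The missing idea is to avoid a basis of $U^0$ entirely, and you already wrote the right reformulation. For an extreme ray with support $S$, the condition reads $A_S^T\lambda_S\in(U^0)^\perp=\rowsp\binom{E_I}{D}$, that is, $A_S^T\lambda_S=E_I^T\mu+D^T\rho$. The integer block merely defines $\mu$. What remains is the homogeneous system $(A_S^T\lambda_S)_C=D_C^T\rho$. This is $q$ equations in the unknowns $(\lambda_S,\rho)$, with coefficient matrix $[A_{S,C}^T\ \ {-D_C^T}]$ whose entries are at most $\max(L_A,\|D\|_\infty)=(nL)^{O(q+1)}$. After replacing $D$ by a row basis, $\rho$ is determined by $\lambda_S$. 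Extremality then makes the kernel one-dimensional, since a second kernel vector would perturb $\lambda_S$ in both directions with the same support. The Cramer vector of this system has entries that are minors of order at most $q$, so Hadamard bounds them by $q^{q/2}\bigl((nL)^{O(q+1)}\bigr)^{q}=(nL)^{O((q+1)^2)}$. A sign flip gives the nonnegative integer generator.

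\textbf{Two smaller corrections.} First, $\|h\|_\infty\le m\|\lambda\|_\infty L_A$ is too weak. After duplicate removal $m$ is only bounded by $(2L_A+1)^n$, which is not polynomial, and this bound would give $(nL)^{O(n+(q+1)^2)}$. Use $\|h\|_\infty\le(q_0+1)\|\lambda\|_\infty L_A$ instead, which your own support bound provides. Second, the bound on $D$ needs no Laplace expansion. Its entries are entries of rows of $C^0$: original rows, bounded by $L_A$, and stationarity rows $2u_j^TQ$, bounded by $2nq^{q/2}L_A^qL_Q$. That bound is in any case part of the lemma's setup.
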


\begin{proof}
(i) After replacing $D$ by a row basis (same kernel) we may assume
$k_D\le q$. Let $B_U\in\Z^{n\times q_0}$ have columns the basis of
$U^{0}$ from Lemma~\ref{lem:m-augdelta}. The coset $\xi+U^{0}$
contains a point of $P$ iff the linear system
$(AB_U)w\le b-A\xi$ in $w\in\R^{q_0}$ is feasible. By Farkas' lemma,
this system is infeasible iff there exists $\lambda\ge0$ with
$\lambda^TAB_U=0$ and $\lambda^T(b-A\xi)<0$; hence the coset meets
$P$ iff $\lambda^TA\xi\le\lambda^Tb$ for every $\lambda$ in the cone
$\Lambda=\{\lambda\ge0:\lambda^TAB_U=0\}$. The cone $\Lambda$ is
polyhedral and pointed (it is contained in $\R^{m}_{\ge0}$), so it is
generated by its finitely many extreme rays, and the condition
$\lambda^TA\xi\le\lambda^Tb$, being linear in $\lambda$, holds on all
of $\Lambda$ iff it holds on every extreme ray. Since
$\lambda^TAB_U=0$ iff $\lambda^TAu=0$ for all $u\in U^{0}$, the rays
of $\Lambda$ are those in the definition of $\calH$, and (i) follows
with $g_h=\lambda^Tb$. An extreme ray lies on a one-dimensional face
of $\Lambda$, so it satisfies $m-1$ linearly independent constraints
among the equations $\lambda^TAB_U=0$, of rank at most $q_0$, and the
sign constraints $\lambda_i=0$; hence at most $q_0+1$ entries of
$\lambda$ are nonzero.

(ii) If $q=0$, then $\Lambda=\R^m_{\ge0}$; choose its coordinate
generators, so $h$ is an original row of $A$ and the bound is
$\|h\|_\infty\le L_A\le(nL)^{O((q+1)^2)}$. Hence assume
$q\ge1$. Vanishing on $U^{0}$ is the defining condition of $\Lambda$:
$h^Tu=\lambda^TAu=0$. For the entry bound, fix an extreme ray
$\lambda$ with support $S$, $|S|\le q_0+1$. The condition
$\lambda_S^TA_Su=0$ for all $u\in U^{0}$ says that $A_S^T\lambda_S$
lies in $(U^{0})^{\perp}=\rowsp\binom{E_I}{D}$, that is,
$A_S^T\lambda_S=E_I^T\mu+D^T\rho$ for some $\mu\in\R^{p}$,
$\rho\in\R^{k_D}$. The rows of $E_I$ are unit vectors on the integer
coordinates, so the integer block of this equation determines
$\mu=(A_S^T\lambda_S)_I$ outright and imposes no condition on
$\lambda_S$; what remains is the continuous block,
\[
(A_S^T\lambda_S)_C \;=\; D_C^T\rho ,
\]
a system of $q$ equations in the $|S|+k_D\le 2q+1$ unknowns
$(\lambda_S,\rho)$ with coefficients bounded by
$\max(L_A,\|D\|_\infty)=(nL)^{O(q+1)}$. Because $D_C$ has independent rows, $\rho$ is determined uniquely
by $\lambda_S$. The kernel of
$[\,A_{S,C}^{T}\ {-}D_C^{T}\,]$ has dimension one: otherwise a
second kernel vector would perturb the strictly positive
$\lambda_S$ in both directions while keeping the same support,
contradicting extremality. Select independent equations of this
system, giving a matrix with one more column than its rank; the standard Cramer description of such a
kernel vector has entries equal, up to sign, to maximal minors of
that submatrix, of order at most $q$, hence of absolute value at most
$q^{q/2}\bigl((nL)^{O(q+1)}\bigr)^{q}=(nL)^{O((q+1)^{2})}$ by Hadamard's
inequality. Finally
$\|h\|_\infty=\bigl\|\sum_{i\in S}\lambda_ia_i\bigr\|_\infty
\le(q_0+1)(nL)^{O((q+1)^{2})}L_A=(nL)^{O((q+1)^{2})}$. The bound is obtained
against the defining rows $D$, whose entries are $(nL)^{O(q+1)}$;
running the same computation through a basis of $U^{0}$, whose
entries can reach $(nL)^{O((q+1)^{2})}$, would inflate the rays to
$(nL)^{O((q+1)^{3})}$.

(iii) $I$-shallowness of $x^{*}$ with respect to $y$ means that
$x^{*}+y+U^{0}$ or $x^{*}-y+U^{0}$ contains no point of $P$; by (i)
some $h\in \calH$ then satisfies $h^T(x^{*}\pm y)>g_h$, so
$h^Tx^{*}>g_h-|h^Ty|\ge g_h-\|h\|_1\|y\|_\infty$, while
$h^Tx^{*}\le g_h$ because $x^{*}$'s own coset contains the point
$x^{*}\in P$. If $h$ were in $\rowsp(C_{\mathrm{cur}})$ we would have
$h^Ty=0$, as $y\in\ker(C_{\mathrm{cur}})$, hence
$h^T(x^{*}\pm y)=h^Tx^{*}\le g_h$, contradicting the violation; so
$h\notin\rowsp(C_{\mathrm{cur}})$. Membership of $h^Tx^{*}$ in
$\tfrac1{\delta_0}\Z$ is Lemma~\ref{lem:m-cramer} with $\ell=h$.
\end{proof}
\begin{table}[t]
\centering\small
\renewcommand{\arraystretch}{1.2}
\begin{tabular}{P{0.16\textwidth}P{0.36\textwidth}P{0.38\textwidth}}
\toprule
Data & Created when the continuous phase ends & Role at every descendant \\
\midrule
$C^0,d^0$ & Original tight rows and one stationarity batch & Ancestor equations retained by every descendant \\
$U^0$ & Continuous kernel of $C^0$ & Every later row vanishes on this subspace \\
$\delta_0$ & A nonsingular minor of a row basis of $\binom{C^0}{E_I}$ & All enumerated values lie in $\delta_0^{-1}\Z$ \\
$\calH$ & Extreme rays of the nonnegative multiplier cone & Describes feasibility after adjustment within $U^0$ \\
\bottomrule
\end{tabular}
\caption{Data fixed at the end of the continuous phase. Fiber invariance
of the objective holds at every retained continuous-flat node and its descendants.}
\label{tab:m-invariants}
\end{table}

\subsection{Algorithm and correctness}
The pieces assemble into the following recursion.

\begin{algorithm}[H]\caption{MixedCurvatureBatch}
\small\label{alg:m-batch}
A node carries $(C,d)$ and a flag recording whether its continuous
phase has ended. At the root the flag is unset and $C$ is empty.
The algorithm returns $+\infty$ if no child has a feasible leaf;
otherwise its returned value is accompanied by a minimizing feasible point.
\begin{enumerate}
\item Discard an inconsistent affine system. If $\ker C=\{0\}$,
solve $Cx=d$, check $Ax\le b$ and $x_I\in\Z^p$, and evaluate $f$
if these checks pass; return.

\item If the node is still in the continuous phase, compute
$U=\ker C\cap(\{0\}^p\times\R^q)$. If $U=\{0\}$, mark the
node continuous-flat and initialize $(C^0,d^0,U^0,\delta_0,\calH)$ using
Lemmas~\ref{lem:m-cramer} and~\ref{lem:m-proj}; continue to step~3.
Otherwise create the following children, return the best value
among them, and do not perform step~3 at the current node:
\begin{itemize}
\item[(S$_\R$)] For each $a_\ell$ with $a_\ell|_U\ne0$, append
$a_\ell^Tx=b_\ell$; its child remains in the continuous phase.
There are at most $m$ such children, and their appended rows are
outside $\rowsp C$.
\item[(CS)] If $Q|_U\succeq0$, apply the stationarity batch, including
its consistency check (Definition~\ref{def:m-cs}). If consistent,
create one child, mark it continuous-flat, and initialize its
$(C^0,d^0,U^0,\delta_0,\calH)$. This phase transition is made even
when the batch appends no rows.
\end{itemize}

\item At a continuous-flat node, compute the adjugate basis $\{y_i\}_{i=1}^r$
of $\ker C$ and the classes $\calN,\calG,\calF$ of
Definition~\ref{def:curvclass}. If $\calF=[r]$, first apply the
terminal rule:
\begin{itemize}
\item[(P)] Choose a rational $x_0$ with $Cx_0=d$ and minimize
$(2Qx_0+c)^Tx$ over $Ax\le b$, $Cx=d$, $x_I\in\Z^p$ by a
fixed-integer-dimension MILP algorithm. Return its actual quadratic
objective value using Corollary~\ref{cor:linear}; if infeasible,
return $+\infty$.
\end{itemize}
If the terminal rule does not apply, create the following children,
all retaining the same flat-node data, and return their best value:
\begin{itemize}
\item[(S$_\Z$)] For each $(h,g_h)\in \calH$ with
$h\notin\rowsp C$ and each
\[
 v\in[g_h-\|h\|_1\Delta(C),g_h]\cap\tfrac1{\delta_0}\Z,
\]
append the unscaled integer row $h^Tx=v$.
\item[(Batch$_\Z$)] If $\calN=\emptyset$ and
$\calG\ne\emptyset$, choose $\calG'$ maximal as in
Algorithm~\ref{alg:batch}. For every tuple
$z_i\in\tfrac1{\delta_0}\Z$ with $|z_i|\le y_i^TQy_i$,
append $2y_i^TQx=z_i-c^Ty_i$ for all $i\in\calG'$.
\end{itemize}
\end{enumerate}
\end{algorithm}

\begin{proposition}[Correctness]\label{prop:m-correct}
If the original instance is feasible, some leaf of
MixedCurvatureBatch evaluates an optimal solution of~\eqref{eq:miqp}.
If it is infeasible, the algorithm returns $+\infty$.
\end{proposition}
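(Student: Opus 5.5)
We follow the proof of Proposition~\ref{prop:correct}. We fix one optimal solution $x^{*}$ (it exists because $P$ is a polytope) and show by induction along the recursion that every non-leaf node whose affine set contains $x^{*}$ has a child whose affine set also contains $x^{*}$. The infeasible case needs no induction. Every returned finite value comes from a leaf that has checked $Ax\le b$, $Cx=d$ and $x_I\in\Z^p$: either the single-point check in step~1 or the MILP in rule (P). Hence an infeasible instance returns $+\infty$.

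Termination holds because every child appends at least one row outside $\rowsp C$. For (S$_\R$) this is Lemma~\ref{lem:m-dichotomy}. For (S$_\Z$) it is explicit in the construction. For the batches, the rows are independent modulo $\rowsp C$ by maximality. The one exception is the CS transition, which may append no rows; it happens at most once per path because it sets the flag. So every path has length at most $n+1$.

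\emph{Continuous phase.} Let the node contain $x^{*}$ and have $U\ne\{0\}$. First suppose $x^{*}$ is not $\R$-deep for some basis vector $u_j$. Lemma~\ref{lem:m-dichotomy}(S$_\R$) then gives a tight row $a_\ell$ with $a_\ell^Tu_j\ne0$. So $a_\ell|_U\ne0$, and the child appending $a_\ell^Tx=b_\ell$ contains $x^{*}$. Otherwise Corollary~\ref{cor:m-allstationary} gives $Q|_U\succeq0$, so the CS child is created. Moreover $x^{*}$ satisfies every stationarity equation, so the consistency check passes and the batch child contains $x^{*}$. If $U=\{0\}$, the node becomes continuous-flat directly, and the same $x^{*}$ proceeds to step~3.

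\emph{Integer phase.} At a continuous-flat node, or a descendant of one, containing $x^{*}$, all quantities in Lemmas~\ref{lem:m-cramer}--\ref{lem:m-proj} are available, because Lemma~\ref{lem:m-vanish} keeps $U=U^0$. If $\calF=[r]$, Lemma~\ref{lem:trichotomy}(P) gives $Q|_{\ker C}=0$. Corollary~\ref{cor:linear} then identifies $f$ with $(2Qx_0+c)^Tx$ minus a constant on the affine set. The MILP optimum therefore has value $f(x^{*})$, and the returned point is feasible and optimal.

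Otherwise suppose $x^{*}$ is $I$-shallow for some adjugate basis vector $y$. Lemma~\ref{lem:m-proj}(iii) supplies $h\notin\rowsp C$ with $h^Tx^{*}$ in the stated window intersected with $\tfrac1{\delta_0}\Z$. Since $\|y\|_\infty\le\Delta(C)$ by Lemma~\ref{lem:adjugate}, this window lies inside the enumerated range $[g_h-\|h\|_1\Delta(C),g_h]$. Hence the corresponding (S$_\Z$) child contains $x^{*}$.

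Now suppose $x^{*}$ is $I$-deep for every basis vector. Lemma~\ref{lem:m-gradid} forces $y_i^TQy_i\ge0$ for all $i$, so $\calN=\emptyset$; and $\calG\ne\emptyset$ because $\calF\ne[r]$. The values $z_i^{*}=2y_i^TQx^{*}+c^Ty_i$ lie in $\tfrac1{\delta_0}\Z$ and satisfy $|z_i^{*}|\le y_i^TQy_i$. So the tuple $(z_i^{*})$ is enumerated, and that child contains $x^{*}$.

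The main obstacle is checking that the hypotheses of Lemmas~\ref{lem:m-gradid} and~\ref{lem:m-proj}(iii) really hold at every descendant, not only at the flat node itself. Specifically, we need that $x^{*}$ satisfies the stationarity equations on its branch, that $U=U^0$ persists, and that $\delta_0$ stays valid after further appended rows. These follow from Lemmas~\ref{lem:m-flat}(b) and~\ref{lem:m-vanish} together with the "in particular" clause of Lemma~\ref{lem:m-cramer}, and we will cite them explicitly. We also need that a second integer batch, if one occurs, does no harm: correctness only requires that $x^{*}$ survives, not that the path has a bounded number of batches. The single-batch structure is therefore a running-time issue, not part of this proof.
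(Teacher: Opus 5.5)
Your proposal is correct and follows the paper's proof. It uses the same descent of a fixed optimum $x^*$ through the (S$_\R$)/(CS) cases, then through the $I$-shallow/$I$-deep cases via Lemmas~\ref{lem:m-proj}(iii) and~\ref{lem:m-gradid}, with termination from rank increase plus the single possible empty (CS) transition. The only deviation is that you do not show the batch child is flat, which the paper does via Lemma~\ref{lem:gradspan} and Theorem~\ref{thm:vanish} to send it directly to a (P)-leaf; instead you rely on the generic descent invariant plus termination, which is valid because Lemmas~\ref{lem:m-vanish}, \ref{lem:m-cramer} and~\ref{lem:m-gradid} hold at every descendant of the continuous-flat node.
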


\begin{proof}
Every leaf evaluates a feasible point of~\eqref{eq:miqp}: a leaf of
type~1 checks $Ax\le b$ and $x_I\in\Z^{p}$ before evaluating, and a
(P)-leaf optimizes over the residual mixed set, which is contained in
the feasible set. Hence every returned value is the value of a
feasible point, and it suffices to show that some leaf evaluates an
optimal solution.

Fix an optimal $x^{*}$ and follow it down the tree: we show that at
every node whose affine set contains $x^{*}$, some child's affine set
again contains $x^{*}$, and that this descent terminates at a leaf
evaluating $x^{*}$ or a point of equal value.

\emph{Continuous phase.} Let the node have $U\neq\{0\}$ with basis
$u_1,\dots,u_{q'}$ (Lemma~\ref{lem:m-augdelta}), and apply
Lemma~\ref{lem:m-dichotomy} to each $u_j$. If some $u_j$ is blocked,
the blocking row satisfies $a_\ell^Tx^{*}=b_\ell$,
$a_\ell^Tu_j\neq0$ (so $a_\ell|_U\neq0$), and
$a_\ell\notin\rowsp(C)$; the corresponding (S$_\R$) child appends
exactly the equality that $x^{*}$ satisfies, so its affine set
contains $x^{*}$, and $\dim U$ drops by one because the appended row
has nonzero restriction to $U$. If no basis vector is blocked,
Corollary~\ref{cor:m-allstationary} gives $Q|_{U}\succeq0$, so the
(CS) child is created, and it also gives the stationarity identities
$2u_j^TQx^{*}+c^Tu_j=0$ for every $j\in[q']$, so $x^{*}$ satisfies
every stationarity equation. The consistency check therefore passes,
and the (CS) child contains $x^{*}$. Since each
(S$_\R$) step reduces $\dim U$ and the (CS) step, or $U=\{0\}$, ends
the phase, the path of $x^{*}$ reaches its continuous-flat node after at most
$q$ (S$_\R$)-steps.

\emph{Descendants after the continuous phase.} By Lemma~\ref{lem:m-vanish}, $U=U^{0}$ at
every descendant, and the branch data $(\delta_0,\calH)$ are fixed at the
continuous-flat node. Let $\{y_i\}$ be the adjugate basis of
$\ker(C_{\mathrm{cur}})$ (Lemma~\ref{lem:adjugate}; the constraint
matrix is integral by the discussion after
Lemma~\ref{lem:m-cramer}). Two cases.

If $x^{*}$ is $I$-shallow with respect to some $y_i$, then
Lemma~\ref{lem:m-proj}(iii) supplies
$h\in \calH\setminus\rowsp(C_{\mathrm{cur}})$ with
$h^Tx^{*}\in[g_h-\|h\|_1\|y_i\|_\infty,\,g_h]\cap\tfrac1{\delta_0}\Z$.
Since $\|y_i\|_\infty\le\Delta(C_{\mathrm{cur}})$, the value
$h^Tx^{*}$ is among those enumerated by (S$_\Z$), and the
corresponding child contains $x^{*}$.

If $x^{*}$ is $I$-deep with respect to every $y_i$, then
Lemma~\ref{lem:m-gradid} gives $y_i^TQy_i\ge0$ for all $i$, so
$\calN=\emptyset$. If $\calG\neq\emptyset$, the (Batch$_\Z$) branch
is created, and the tuple $z_i^{*}=2y_i^TQx^{*}+c^Ty_i$,
$i\in\calG'$, lies in $\tfrac1{\delta_0}\Z$ with
$|z_i^{*}|\le y_i^TQy_i$ (Lemma~\ref{lem:m-gradid}), so it is
enumerated and the corresponding child contains $x^{*}$; for
$i\in\calG\setminus\calG'$, maximality of $\calG'$ places $y_i^TQ$
in $\rowsp(C)+\Span\{y_j^TQ:j\in\calG'\}$, exactly as in
Proposition~\ref{prop:correct}. After the batch,
$v^TQ\in\rowsp(C_{\mathrm{new}})$ for every
$v\in\ker(C_{\mathrm{cur}})$: for the spanning argument this is
Lemma~\ref{lem:gradspan} verbatim, and the $U$-part is
Lemma~\ref{lem:m-flat}(a) with Lemma~\ref{lem:m-vanish}; hence $Q$
vanishes on the new kernel (Theorem~\ref{thm:vanish}, whose proof is
integrality-free), the child has $\calF=[r]$, and its (P)-leaf
solves a mixed integer linear program whose feasible set contains
$x^{*}$, returning the optimal value. If $\calG=\emptyset$ as well,
then $\calF=[r]$ at the current node and the same conclusion applies
directly.

Finally, every appended row is linearly independent of the current
rows: (S$_\R$)- and (S$_\Z$)-rows lie outside $\rowsp(C)$ by
construction, (CS)-rows by the choice of $J$, and batch rows by the
choice of $\calG'$. Each step that appends rows increases the rank of $C$. The only
possible step with no rank increase is the single transition to the
flat phase when the stationarity batch is empty. Thus there are at
most $n$ rank-increasing steps and at most one additional phase
transition. A path with $U=\{0\}$ is marked continuous-flat immediately, so
the initialization cannot repeat. This proves termination.
\end{proof}
\subsection{Running time}
It remains to prove the running-time bound.

\begin{proof}[Proof of Theorem~\ref{thm:m-main}]
By the proof of Proposition~\ref{prop:m-correct}, every root-to-leaf
path consists of at most $q$ (S$_\R$)-steps, at most one (CS)-step,
at most $n$ (S$_\Z$)-steps, at most one (Batch$_\Z$)-step, and a
leaf. Summing over the possible positions of the continuous
batch and the integer batch contributes at most $(n+1)^2$;
at each position use the product of the corresponding branching
factors below. This also bounds the total number of nodes up to
a factor $n+2$. We bound the number of children created at each step and the
growth of $\Delta:=\Delta(C)$ along the path; the number of leaves is
at most the product of the per-step child counts, and all linear algebra on any fixed node has polynomial bit
complexity in its data. At each continuous-flat node the family $\calH$ is
constructed by enumerating supports of size at most $\dim U^0+1$, at a
cost $(m+n)^{O(q+1)}\mathrm{poly}(\varphi)$; this initialization
factor will be included explicitly. It is incurred even if that
node subsequently terminates without creating children.

\emph{(S$_\R$).} Only rows of $A$ are appended, so throughout this
phase $C$ consists of rows of $A$ and $\Delta\le\Delta(A)$. Each step
creates at most $m$ children and enumerates no values, contributing
$m^{q}$ over the phase.

\emph{(CS).} One child. By Lemma~\ref{lem:m-augdelta} applied at this
node, where the entries of $C$ are bounded by $L_A$, the basis
vectors satisfy $\|u_j\|_\infty\le q^{q/2}L_A^{q}$, so the appended
rows $2u_j^TQ$ have entries at most
$2n\,q^{q/2}L_A^{q}L_Q=(nL)^{O(q+1)}$. All $|J|\le q$ rows are appended
at once, and a Laplace expansion of any square submatrix of $C^{0}$
along the appended rows expresses its determinant as a sum of at most
$n^{q}$ terms, each a product of at most $q$ appended entries and a
minor of the old $C$; hence
$\Delta(C^{0})\le\Delta_1:=\Delta(A)\cdot(nL)^{O((q+1)^{2})}$ and
$\delta_0\le\Delta(C^{0})\le\Delta_1$ by Lemma~\ref{lem:m-cramer}.

\emph{(S$_\Z$).} By Lemma~\ref{lem:m-proj}(ii), every appended row
$h$ is integral with $\|h\|_\infty\le(nL)^{O((q+1)^{2})}$, so by the
cofactor expansion in the proof of Lemma~\ref{lem:growth} each step
multiplies $\Delta$ by at most $n\|h\|_\infty=(nL)^{O((q+1)^{2})}$;
right-hand-side denominators never enter $\Delta$, which is computed
on integer data throughout (discussion after
Lemma~\ref{lem:m-cramer}). After $t\le n$ steps,
$\Delta\le\Delta_1(nL)^{O(t(q+1)^{2})}\le\Delta_2:=
\Delta_1(nL)^{O(n(q+1)^{2})}$. The children of the $t$-th step are
indexed by $h\in \calH$ and an enumerated value: the count of extreme-ray
supports gives $|\calH|\le m^{q+1}\le(m+n)^{q+1}$, and the interval
$[g_h-\|h\|_1\Delta,\,g_h]$ contains at most
$\delta_0\|h\|_1\Delta+1$ points of $\tfrac1{\delta_0}\Z$. The
product of the child counts over the phase is at most
\[
\prod_{t=1}^{n}(m+n)^{q+1}
\bigl(\delta_0\|h\|_1\Delta_1(nL)^{O(t(q+1)^{2})}+1\bigr)
\;\le\;
(m+n)^{n(q+1)}
\bigl(2\delta_0\,n(nL)^{O((q+1)^{2})}\Delta_1\bigr)^{n}
(nL)^{O(n^{2}(q+1)^{2})} .
\]

\emph{(Batch$_\Z$).} With $\Delta\le\Delta_2$, the adjugate basis
satisfies $\|y_i\|_\infty\le\Delta_2$, so
$y_i^TQy_i\le n^{2}L_Q\Delta_2^{2}$, and each of the $|\calG'|\le n$
rows enumerates at most $2\delta_0y_i^TQy_i+1\le
2\Delta_1n^{2}L_Q\Delta_2^{2}+1$ values of $\tfrac1{\delta_0}\Z$, for
a factor of at most $(3\Delta_1n^{2}L_Q\Delta_2^{2})^{n}$.

\emph{Leaf.} A type-1 leaf costs $\mathrm{poly}(\varphi)$. A
(P)-leaf is a mixed integer linear program with $p\le n$ integer
variables, solved by fixed-dimension integer programming methods
\cite{Lenstra83,Kannan87,Dadush12} in
$n^{O(n)}\cdot\mathrm{poly}(\varphi)$ time, using the residual
objective of Corollary~\ref{cor:linear}. These methods apply to the
projection onto the integer coordinates: linear programming supplies
the required separation oracle for each objective-threshold slice
and recovers the continuous coordinates. Exact optimization follows
from the standard rational bit bounds and threshold search.
The stationarity consistency checks use rational linear algebra and
add only polynomial work per node.
The rational right-hand sides and that objective may depend on
$b,c$, but their bit lengths are polynomial in $\varphi$ and in
the logarithms of the displayed coefficient bounds. For example,
post-batch entries are at most $2nL_Q\Delta_2$ and their minors
are bounded by $(2n^2L_Q\Delta_2)^n\Delta_2$. The logarithm of
this bound is polynomial in $n,q,\log(nL)$. Projected right-hand
sides are integer combinations of the original $b$, and all
later right-hand sides have the single denominator $\delta_0$;
their numerators have the same kind of polynomial bit bound.
Since $n,q\le\varphi$, these are uniform polynomial bounds in
$\varphi$, with no hidden dimension-dependent polynomial degree.

\emph{Collection.} Using $\Delta(A)\le(nL_A)^{n}$ and
$m\le(2L_A+1)^{n}$ after removing duplicate rows, the initialization of $\calH$ contributes at most
$(m+n)^{O(q+1)}=(nL)^{O(n(q+1))}$ per continuous-flat node, which is
absorbed in the final bound. All other factors above
are powers of $nL$: $m^{q}=(nL)^{O(nq)}$,
$(m+n)^{n(q+1)}=(nL)^{O(n^{2}(q+1)^{2})}$,
$\Delta_1=(nL)^{O(n+(q+1)^{2})}$, and
$\Delta_2=(nL)^{O(n+n(q+1)^{2})}$. The dominant contributions are
$(nL)^{O(n^{2}(q+1)^{2})}$, from the $n$-fold product of the growing
$\Delta$ in the (S$_\Z$)-phase and from
$\Delta_2^{2n}=(nL)^{O(n^{2}(q+1)^{2})}$ in the batch. Since
$q^{2}+q+1\le(q+1)^{2}$, every exponent is $O(n^{2}(q+1)^{2})$, including the
initialization factor and the $n+1$ possible phase-transition
positions,
giving the stated bound
$(nL)^{O(n^{2}(q+1)^{2})}\cdot\mathrm{poly}(\varphi)$.

At $q=0$ the algorithm reduces to the pure one and the coarse
bound becomes $(nL)^{O(n^2)}$:
$U=\{0\}$ at the root, so the path begins continuous-flat with $\delta_0=1$ and
$D$ empty; the cone $\Lambda$ is all of $\R^{m}_{\ge0}$, whose
extreme rays are the coordinate directions, so $\calH$ is the set of rows
of $A$ with $g_h=b_h$; (S$_\Z$) becomes the constraint branch,
(Batch$_\Z$) the batch, and (P) the ILP leaf of
Algorithm~\ref{alg:batch}, recovering Theorem~\ref{thm:main}.
\end{proof}

For fixed $q$, the bound has the same coarse form as in the pure case.
The $(q+1)^2$ factor comes from the size of the projected rows and
subsequent determinant growth; improving it is an open question
(Section~\ref{sec:discussion}).

\section{Unboundedness and certificates}\label{sec:unbounded}
The optimization algorithms branch on numerical slack and gradient
levels. Unboundedness admits a different treatment: count recession
faces and rays, and test signs by linear optimization. We obtain a
coefficient-magnitude-independent running time for every fixed dimension.
The certificate framework comes from Del Pia, Dey, and Molinaro
\cite{DelPiaDeyMolinaro17}; the face enumeration and refinement below
provide the explicit running-time bound. Lokshtanov's algorithm also
detects unboundedness, with a parametric factor depending on coefficient
magnitudes \cite{Lokshtanov17}.

A finite optimal value does not require a bounded feasible polyhedron.
The test here classifies arbitrary rational polyhedra; the mixed
batching theorem in Section~\ref{sec:mixed} assumes a polytope.

\begin{theorem}[Unboundedness test]\label{thm:unb-main}
There is an algorithm that, given an instance of IQP~\eqref{eq:iqp},
decides whether it is infeasible, unbounded, or has a finite optimal
value, in time
\[
(m+n)^{O(n)}\cdot\mathrm{poly}(\varphi).
\]
If unbounded, the algorithm outputs an integer point
$x_{0}\in P\cap\Z^{n}$ and an integer ray
$\rho\in\mathrm{rec}(P)\cap\Z^{n}$, both of size
$\mathrm{poly}(\varphi)$, such that
$f(x_{0}+t\rho)\to-\infty$ as $t\to\infty$ over the integers.
In particular the parametric factor is independent of
$L_{A},L_{Q},\Delta(A)$: for every fixed $n$ the test runs in
polynomial time.
\end{theorem}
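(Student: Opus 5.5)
Feasibility comes first: Lenstra--Kannan integer feasibility in $n^{O(n)}\mathrm{poly}(\varphi)$ time either reports $P\cap\Z^n=\emptyset$ or returns an integer point $x_0$ of polynomial size. Then I would use the Del Pia--Dey--Molinaro characterization: an integer-feasible instance is unbounded below iff there exist a face $F$ of $P$ containing an integer point and a ray $\rho\in\mathrm{rec}(P)$ such that
\begin{itemize}
\item either $\rho^TQ\rho<0$,
\item or $\rho^TQ\rho=0$ and $(2Qx+c)^T\rho<0$ for some integer point $x$ of the relevant face.
\end{itemize}
The first case needs some care in choosing the base point; the second is handled by a linear test. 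Since $\mathrm{rec}(P)$ is a rational cone, rational rays can be scaled to integer ones. The base point is what must be restricted to faces, because a $\rho$ of zero curvature may have positive gradient at interior points but negative gradient near a face.

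The enumeration counts recession faces. The cone $\mathrm{rec}(P)=\{\rho: A\rho\le 0\}$ has at most $\binom{m}{\le n}=(m+n)^{O(n)}$ faces, each determined by a set of at most $n$ tight rows. For each face $G$ of the cone, I test whether $Q$ is copositive on $G$.
\begin{itemize}
\item \emph{Negative curvature.} Noncopositivity is witnessed on a face where $Q$ restricted to the span has a negative direction inside the relative interior. Equivalently, I enumerate faces $G$, compute $Q|_{\Span G}$ by $LDL^T$, and use the standard recursive fact that a minimizer of $\rho^TQ\rho$ over the normalized cone lies in the relative interior of some face, where it is a stationary point. This gives an enumeration over faces, plus eigen/linear-algebra tests of polynomial bit size.
\item \emph{Zero curvature.} For each face $G$ with $Q$ copositive on it, the directions with $\rho^TQ\rho=0$ form a set I can describe face-by-face via $\ker$ of $Q|_{\Span G}$ intersected with $G$. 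I then minimize the linear function $(2Qx+c)^T\rho$ jointly over $x\in P$ and $\rho$ in that polyhedral piece. This is a bilinear problem, but it becomes linear after fixing the face of $P$ that contains $x$; again I enumerate $(m+n)^{O(n)}$ faces of $P$.
\end{itemize}
On each face $F$ of $P$, integer feasibility is required, so each candidate pair costs one Lenstra call. This yields $(m+n)^{O(n)}\cdot n^{O(n)}\mathrm{poly}(\varphi)$ overall.

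For the certificate, a strictly negative linear value on an integer point of $F$ combined with a ray $\rho$ gives $f(x_0+t\rho)=f(x_0)+t(2Qx_0+c)^T\rho+t^2\rho^TQ\rho\to-\infty$. Integrality of $x_0+t\rho$ for integer $t$ follows since both are integral. Polynomial sizes come from vertex/ray encodings via Cramer bounds in $\varphi$, with no dependence on $L$ beyond $\varphi$.

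The main obstacle is the correctness direction: if none of the enumerated tests fires, the integer optimum is attained. I would prove it by a decomposition $x=v+\sum t_j\rho_j$ (Minkowski--Weyl plus integer hull finiteness, as in Del Pia--Dey--Molinaro) together with an argument that along a sequence with $f\to-\infty$ the normalized directions converge to a ray in some face with zero or negative curvature. In the zero-curvature case, the linear part must eventually be negative on a face that still contains integer points. Handling the face selection for the base point, rather than all of $P$, is the delicate step, and I would cite the DDM certificate theorem for it rather than reprove it.
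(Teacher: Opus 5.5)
Your negative-curvature step is essentially the paper's Test~A: minimize $q$ on a polyhedral normalization of the cone, and use that the minimizer is stationary in the relative interior of some face. Your characterization is also correct. The ``face $F$ of $P$'' in it is vacuous, because any integer point of $P$ can serve as base point; the only real issue is that the base point be integral.

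The genuine gap is the zero-curvature step. The slope $s_\rho(x)=(2Qx+c)^T\rho$ is bilinear in $(x,\rho)$. Restricting $x$ to a face $F$ of $P$ does not remove the cross term $2x^TQ\rho$, so there is no ILP for your ``one Lenstra call per candidate pair'' to solve, and neither natural reading works.
\begin{itemize}
\item Suppose you minimize over continuous $x\in F$ and only check that $F$ contains an integer point. Take $P=\{(z,y):0\le 2z\le 1,\ y\ge 0\}$ and $f=-2zy$. The flat ray $w=(0,1)$ has $s_w(z,y)=-2z=-1$ at $(1/2,0)$, and $F=P$ contains integer points. Yet every integer point has $z=0$, slope $0$ and value $0$, so the IQP is bounded and the test reports a false positive.
\item Suppose instead you fix the $\rho$ returned by a joint continuous minimization and then call Lenstra. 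You can miss a flat ray that is mildly negative at an integer point when the most negative ray is negative only at nonintegral points. For example, take $-1\le 2z\le 1$, $y\ge 0$ in $\R^3$ and $f=-4zy_1-y_2$. Every face of $P$ containing integer points returns $\rho=(0,1,0)$, whose slope is $0$ at every integer point. Yet $(0,0,t)$ drives $f$ to $-\infty$.
\end{itemize}
Taking $F$ to be a vertex does make the problem linear in $\rho$. But vertices of $P$ are generally nonintegral, which is exactly the failure the paper's integer-hull decomposition avoids.

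The missing idea is to decouple in the other direction: fix $\rho$, not $x$, over a finite family.
\begin{itemize}
\item Your description of the flat set is right. Under copositivity, a flat $\rho$ in the relative interior of a face $G$ of $K$ minimizes $q$ on $G$, so it is stationary for $q$ on $\Span G$. Hence the flat set is the finite union of the polyhedral cones $Z_G=\{\rho\in G:\ \rho^TQg=0\ \forall g\in\Span G\}$, each consisting entirely of flat vectors.
\item Since $s_\rho(x)$ is linear in $\rho$, if $s_\rho(x)<0$ for an integer $x$ and some $\rho\in Z_G$, then $s_w(x)<0$ for some generator $w$ of $Z_G$. To make this work, split into orthants so the cones are pointed, or add $\pm$ a basis of the lineality space.
\item It therefore suffices to enumerate the $(m+n)^{O(n)}$ generators and scale each to an integer vector. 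Then solve one ILP feasibility problem $\exists x\in P\cap\Z^n:\ (2Qw)^Tx\le -c^Tw-1$ per generator. Necessity is supplied by the DDM theorem you cite.
\end{itemize}

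Repaired this way, your route differs from the paper's. The paper obtains its finite flat family from a simplicial decomposition of the pointed recession cone, refined so that every face with zero minimum has a flat extreme ray. It then uses that same refinement, together with quantitative positivity off the flat rays, the cross-term lemma and Meyer's integer-hull decomposition, to prove necessity itself rather than cite it.
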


There are two possible certificates. A recession direction with negative
quadratic curvature gives quadratic decrease from any feasible basepoint.
If the recession form is nonnegative, unboundedness is witnessed by a
zero-curvature direction with negative linear slope at a feasible
basepoint. Test~A handles the first case; a finite refinement makes the
second case algorithmic.

\begin{corollary}[Reduction to finite-value optimization]\label{cor:unb-confined}
For every fixed $n$, deciding boundedness of IQP is polynomial;
hence the open question of polynomial-time exact optimization for
fixed $n$ concerns instances with a finite optimal value. Here
``bounded'' refers to the objective value, not necessarily the
feasible polyhedron.
\end{corollary}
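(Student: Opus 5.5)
The statement follows directly from Theorem~\ref{thm:unb-main}, so the plan is to specialize that theorem to fixed $n$ and read off the consequence. Fix $n$. Theorem~\ref{thm:unb-main} classifies an instance as infeasible, unbounded below, or having a finite optimal value in time $(m+n)^{O(n)}\mathrm{poly}(\varphi)$. The first step is to show that this is a polynomial in the input length. Each inequality row contributes at least one bit to the encoding, so $m\le\varphi$, and trivially $n\le\varphi$. Hence $(m+n)^{O(n)}\le(2\varphi)^{O(n)}$. For fixed $n$ the exponent is a constant, so the whole running time is $\mathrm{poly}(\varphi)$ with degree depending only on $n$. The point to stress is that the bound contains no factor in $L_A$, $L_Q$, or $\Delta(A)$. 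The coefficient magnitudes therefore enter only through $\varphi$, and the degree of the polynomial does not depend on them.

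The second step handles the interpretation of ``bounded''. The theorem's three-way output answers exactly whether $\inf f$ over $P\cap\Z^n$ is $-\infty$. It is not a statement about whether $P$ is bounded, since a finite value can occur on an unbounded polyhedron, for instance when the recession form is nonnegative and the linear slope is nonnegative along every zero-curvature recession direction. I would note that the decision concerns the objective value only, as the corollary states.

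For the reduction claim, suppose some algorithm solves exact optimization in polynomial time for fixed $n$ on instances with a finite optimal value. Run the test first. If it reports infeasible or unbounded, output that answer; in the unbounded case, return the certificate $(x_0,\rho)$. Otherwise, call the hypothetical algorithm. The first call is polynomial, so the composition is polynomial. Conversely, any polynomial algorithm for general IQP restricts to one for finite-value instances. The open question is therefore equivalent to its restriction to finite-value instances.

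I expect no step to be a genuine obstacle. The only care point is the first step: the base $(m+n)$ must be polynomial in $\varphi$, which holds because duplicate rows need not be removed. Here one should not substitute $m\le(2L_A+1)^n$, because that bound would reintroduce $L_A$ into the base.
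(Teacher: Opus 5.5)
Your proposal is correct and follows essentially the same route as the paper: apply the test of Theorem~\ref{thm:unb-main}, observe that $(m+n)^{O(n)}\mathrm{poly}(\varphi)$ is polynomial for fixed $n$ with no dependence on $L$, and run that test before any exact algorithm. Your extra remarks are sound elaborations the paper leaves implicit: $m,n\le\varphi$ justifies the polynomial claim, and you correctly avoid substituting $m\le(2L_A+1)^n$.
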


\begin{proof}
Boundedness is decided by the test of Theorem~\ref{thm:unb-main} in
$(m+n)^{O(n)}\cdot\mathrm{poly}(\varphi)$ time, which is polynomial
for fixed $n$ and involves no parametric dependence on $L$. Any exact
algorithm may therefore run the test first and handle unbounded
instances outright, so every occurrence of $L$ in its parametric
factor is incurred on bounded instances.
\end{proof}

We contrast this with variable dimension, where deciding
unboundedness is NP-complete already for objective rank $3$ with no
integer variables \cite[Thm.~2]{DelPia22}, and with the concave
case in variable dimension, where Del Pia~\cite[\S2.2]{DelPia18} gives an
explicit test using $2k+1$ mixed-integer linear programs.

\subsection{Feasibility and pointed pieces} First test integer feasibility. Partition the
feasible region into the at most $2^n$ closed orthant pieces
\[
 P_\sigma=P\cap\{x:\sigma_i x_i\ge0\ (i\in[n])\},
 \qquad \sigma\in\{-1,1\}^n.
\]
Discard pieces with no integer feasible point. Their recession cones
are pointed: a vector and its negative in the same orthant must
both be zero. The original problem is unbounded if and only if one
of these finitely many integer subproblems is unbounded. Each piece
adds $n$ unit rows and preserves integrality and polynomial encoding
length. Thus it suffices below to consider a nonempty pointed piece,
which we again denote by $P=\{x:Ax\le b\}$; its row count is at most
the original $m+n$. No quotient of the integer lattice is taken.

Let $K=\mathrm{rec}(P)$ and
$P_I=\mathrm{conv}(P\cap\Z^n)$. Meyer's theorem~\cite{Meyer74}
gives a rational polyhedron satisfying
\begin{equation}\label{eq:rec-equal}
 \mathrm{rec}(P_I)=\mathrm{rec}(P)=K.
\end{equation}
In particular $P_I$ is pointed. Write $q(r)=r^TQr$ and
\[
 s_w(x)=2w^TQx+c^Tw.
\]
Then $f(x+tw)=f(x)+t\,s_w(x)+t^2q(w)$.

\subsection{Negative-curvature test}
The first test isolates negative curvature on the recession cone.

\begin{lemma}[Negative-curvature test]\label{lem:unb-testA}
The value $\mu:=\min\{q(r):Ar\le0,\ r\in[-1,1]^{n}\}$ is rational of
size $\mathrm{poly}(\varphi)$, computable in
$(m+n)^{O(n)}\cdot\mathrm{poly}(\varphi)$ time together with a
rational minimizer; and $\mu<0$ iff some $r\in K$ has $q(r)<0$.
If $\mu<0$, IQP is unbounded, with certificate $(x_{0},\rho)$ where
$\rho$ is the integrally scaled minimizer and $x_{0}\in P\cap\Z^{n}$
is arbitrary.
\end{lemma}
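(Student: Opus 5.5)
The proof has three parts: an equivalence, an algorithm for computing $\mu$, and the unboundedness certificate.

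\emph{Equivalence.} Put $K'=K\cap[-1,1]^n$, with $K=\{r:Ar\le 0\}$. Then $\mu\le 0$, because $r=0$ is feasible. If $r\in K$ has $q(r)<0$, scale it to $r/\|r\|_\infty\in K'$; this keeps $q$ negative, since $q$ is homogeneous of degree two. So $\mu<0$. The converse is immediate, because $K'\subseteq K$.

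\emph{Computing $\mu$.} The region $K'$ is a polytope. I would minimize the quadratic $q$ over it with the standard face argument for indefinite quadratics. Let $F$ be the minimal face of $K'$ that contains a global minimizer $r^*$. Then $r^*$ is a local minimizer of $q$ on the relative interior of $F$. Stationarity on $\mathrm{aff}(F)$ then gives the linear system $Qr\in \mathrm{aff}(F)^\perp$, $r\in \mathrm{aff}(F)$.

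On a face where $Q$ restricted to the direction space of $\mathrm{aff}(F)$ is positive semidefinite, $q$ is constant on the stationary set inside $\mathrm{aff}(F)$. So one rational solution of that linear system gives the face's candidate value. On any other face there is no interior local minimum, so the minimum lies on a lower-dimensional face; the enumeration may simply skip such faces.

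Each face is fixed by choosing which of the $m+2n$ rows are tight. A face of dimension $d$ is specified by at most $n-d\le n$ tight rows. So at most $(m+2n)^{n}=(m+n)^{O(n)}$ candidate tight sets need to be enumerated. For each one, solve the stationarity system by Gaussian elimination and check feasibility, i.e.\ that the solution lies in $K'$. Take the smallest value found.

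Feasibility of the chosen stationary point is the step I expect to be the main obstacle. A solution of the linear system may fall outside the face. The fix I would use: replace the face by an arbitrary tight set $T$, the affine set $\{A_T r=\text{rhs}_T\}$. Then minimize $q$ there only when $q$ is bounded below on it. This holds exactly when $Q$ is PSD on the kernel and the linear term lies in the range, and both conditions can be tested with the $LDL^T$ factorization used earlier. Keep the candidate only if it lies in $K'$.

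The true minimizer $r^*$ is produced this way: take $T$ to be its tight set. Then $q$ is bounded below on that affine set and attains its minimum there at $r^*$, since $r^*$ is a local minimizer relative to an open neighborhood in the affine set and a quadratic's local minimum on an affine set is global. Any other stationary point on that set has the same value, because the minimum value over the affine set is unique. So some enumerated feasible candidate attains $\mu$.

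\emph{Output size.} All candidates come from linear systems with $\mathrm{poly}(\varphi)$ coefficients, so the minimizer and $\mu$ have size $\mathrm{poly}(\varphi)$ by Cramer's rule. The total time is $(m+n)^{O(n)}\mathrm{poly}(\varphi)$.

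\emph{Certificate.} If $\mu<0$, let $\rho$ be the minimizer scaled by the common denominator. Then $\rho\in K\cap\Z^n$ and $q(\rho)<0$. Take any $x_0\in P\cap\Z^n$, which exists because the piece is nonempty. Then $x_0+t\rho\in P\cap\Z^n$ for every integer $t\ge0$, and
\[
f(x_0+t\rho)=f(x_0)+t\,s_\rho(x_0)+t^2q(\rho)\to-\infty.
\]
Both $x_0$ and $\rho$ have size $\mathrm{poly}(\varphi)$; $x_0$ can be found by fixed-dimension ILP within the time budget.
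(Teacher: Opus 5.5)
Your plan follows the paper's route: enumerate tight sets of $K\cap[-1,1]^n$, solve the linear stationarity system on each affine hull, use that $q$ is constant on the stationary set, and keep only values realized inside the region. The equivalence and certificate parts also match the paper. The gap is at the step you flagged yourself. You compute \emph{one} solution of the stationarity system per tight set and keep it only if that point lies in $K\cap[-1,1]^n$. You then justify correctness by taking $T$ to be the tight set of an arbitrary minimizer $r^*$.

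When $Q$ is positive semidefinite but singular on the direction space, the stationary set on that affine set is a positive-dimensional affine subspace. The computed point then need not be $r^*$, and it can be infeasible. It is discarded, and the fact that all stationary points have value $\mu$ does not rescue it. For instance, take $Q=0$, $K=\{r:r_1-2r_2\le0\}$ and $r^*=(1,\tfrac34)$. The tight set is $\{r_1=1\}$, every point of that line is stationary, and the natural particular solution $(1,0)$ violates $r_1-2r_2\le0$. Nor can you drop the filter. For $q(r)=r_2^2-4r_1r_2$ on $[-1,1]^2$ one has $\mu=-3$, while on the line $r_1=1$ the bounded-below minimum is $-4$, attained only at $(1,2)$.

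The paper closes this gap with one LP per face. It tests whether the whole stationary set meets the face, and takes a basic solution of that LP as the rational minimizer. It also proves constancy of $q$ on the stationary set directly, with no semidefiniteness hypothesis.

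Alternatively, your single-solution filter becomes correct if you choose $r^*$ among all minimizers so that its minimal face $G$ has least dimension. Suppose some $0\neq k$ in the direction space of $\mathrm{aff}(G)$ had $Qk$ orthogonal to that space. Stationarity gives $k^TQr^*=0$, and also $k^TQk=0$, so $q\equiv\mu$ on the line $r^*+tk$. Since $G$ is bounded, this line leaves $G$ at a relative-boundary point. That point is a minimizer lying in a lower-dimensional face, a contradiction. Hence the stationary set on $\mathrm{aff}(G)$ is $\{r^*\}$. Moreover $q$ is bounded below there, any solver returns $r^*$, and a row basis of the rows tight at $r^*$ is among your enumerated $T$.

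One of these two arguments has to be added. As written, ``the true minimizer $r^*$ is produced this way'' does not follow.
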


\begin{proof}
The feasible region $K\cap[-1,1]^{n}$ is a rational polytope with
$m+2n$ facets, so $q$ attains its minimum, and by homogeneity of $q$
the sign claim holds. For the computation, enumerate the faces $F$
(at most $(m+2n+1)^{n}$, each specified by its tight rows $T$). If
the minimum is attained at $x^{*}$ with minimal face $F$, then $x^{*}$
lies in the relative interior of $F$ and is a critical point of
$q|_{\mathrm{aff}(F)}$. Parameterizing $\mathrm{aff}(F)$ as
$r_{0}+Vs$, criticality reads $V^{T}Q(r_{0}+Vs)=0$, a linear system;
on its solution set the objective is \emph{constant}: writing
$g(s)=s^{T}Ms+p^{T}s+g_{0}$ with $M=V^{T}QV$, two critical points
$s_{0}$ and $s_{0}+k$ satisfy $2M(s_{0}+k)+p=2Ms_{0}+p=0$, so
$Mk=0$ and
$g(s_{0}+k)=g(s_{0})+k^{T}(2Ms_{0}+p)+k^{T}Mk=g(s_{0})$.
So each face contributes one well-defined candidate value, provided
its critical set intersects the face; this intersection is a linear
feasibility problem, checked by one LP. The minimum over all
contributing faces equals $\mu$, all arithmetic is on rational data
of size $\mathrm{poly}(\varphi)$, and a rational minimizer is a basic
solution of the corresponding LP. If $\mu<0$, scale the minimizer by
the least common denominator to obtain $\rho\in K\cap\Z^{n}$ with
$q(\rho)<0$; for any $x_{0}\in P\cap\Z^{n}$ (one Lenstra call),
$x_{0}+t\rho\in P\cap\Z^{n}$ for all $t\in\Z_{\ge0}$ and
$f(x_{0}+t\rho)=t^{2}q(\rho)+O(t)\to-\infty$.
\end{proof}

From now on assume $\mu\ge0$, i.e., $Q$ is \emph{copositive on $K$}.
When the test passes, $q$ is nonnegative on $K$, and the difficulty
concentrates on the directions where it vanishes.

Call $r\in K$ \emph{flat} if $q(r)=0$.

\begin{lemma}[Flat cross-term lemma]\label{lem:unb-cross}
Let $Q$ be copositive on $K$, let $r\in K$ be flat, and let $w\in K$.
Then $r^{T}Qw\ge0$.
\end{lemma}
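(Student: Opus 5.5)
The argument is a one-variable perturbation along the cone. Since $K$ is a convex cone and $r,w\in K$, the whole ray $r+tw$ for $t\ge0$ lies in $K$. Copositivity of $Q$ on $K$ (the standing assumption $\mu\ge0$ from Lemma~\ref{lem:unb-testA}) then gives
\[
 0\le q(r+tw)=q(r)+2t\,r^TQw+t^2q(w)=2t\,r^TQw+t^2q(w)
\]
for every $t\ge0$. Here we used that $r$ is flat, i.e.\ $q(r)=0$, and that $Q$ is symmetric, so the cross terms combine as $2t\,r^TQw$.

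Next, divide by $t>0$ to get $2\,r^TQw+t\,q(w)\ge0$ for all $t>0$. Letting $t\downarrow0$ yields $r^TQw\ge0$. The value $q(w)$ is finite, and by copositivity it is in fact nonnegative, so it cannot interfere in the limit.

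Alternatively, argue by contradiction. Suppose $r^TQw<0$. Choose $t>0$ smaller than $-2r^TQw/\max(q(w),1)$. Then $q(r+tw)<0$ with $r+tw\in K$, which contradicts copositivity.

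There is no real obstacle in this argument. The only points to check are that $K=\mathrm{rec}(P)$ is closed under nonnegative combinations, which holds because it is the polyhedral cone $\{r:Ar\le0\}$, and that the expansion uses the symmetry of $Q$.
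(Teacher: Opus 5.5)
Your proposal is correct and follows the paper's own argument: using $r+tw\in K$ and flatness, you expand $0\le q(r+tw)=2t\,r^TQw+t^2q(w)$, divide by $t>0$, and let $t\downarrow0$. Your contradiction variant is the same argument with an explicit choice of $t$.
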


\begin{proof}
For $t>0$, $r+tw\in K$, so
$0\le q(r+tw)=2t\,r^{T}Qw+t^{2}q(w)$; divide by $t$ and let
$t\downarrow0$.
\end{proof}

\subsection{A finite family of flat directions}
The cross-term lemma controls interactions with zero-curvature rays.
We next construct a finite collection of such rays sufficient to
witness every negative-slope case.

\begin{lemma}[Decomposition by chains of faces]\label{lem:unb-triang}
For a recession cone $K$ contained in an orthant, one can compute a
simplicial decomposition with at most $(m+n+1)^{O(n)}$ pieces in
$(m+n+1)^{O(n)}\mathrm{poly}(\varphi)$ time. All generators have
polynomial encoding length, with a polynomial degree independent of
$n$, and can be scaled to integer vectors.
\end{lemma}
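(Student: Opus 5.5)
We will decompose $K$ by the standard chain-of-faces (barycentric-type) triangulation, organized so that the number of pieces depends only on how many faces there are. $K=\{r:Ar\le0\}$ lies in an orthant, so it is pointed. We therefore intersect it with the normalizing hyperplane $\sigma^Tr=1$, where $\sigma$ is the sign vector of the orthant piece. This gives the polytope $K_1=K\cap\{\sigma^Tr=1\}$, whose cone over is exactly $K$: every nonzero $r\in K$ has $\sigma^Tr=\|r\|_1>0$. Every face of $K_1$ is determined by a set $T$ of tight rows, and it suffices to take $|T|\le n$ independent rows. So we will enumerate at most $(m+n+1)^{n}$ candidate index sets, which includes the at most $n$ unit rows added in the orthant step. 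For each set we decide by one LP whether it defines a nonempty face, and we keep distinct faces only, identifying each face by its full tight set. This gives a face lattice with at most $(m+n+1)^{O(n)}$ elements in the stated time.

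For each nonempty face $F$ of $K_1$ we compute one rational relative-interior point $c_F$. The LP that maximizes a slack variable $\tau$ subject to $a_i^Tr\le -\tau$ for the rows not tight on $F$, together with the equations defining $F$, $\sigma^Tr=1$ and $\tau\le1$, has a basic optimal solution. That solution has encoding size polynomial in $\varphi$ by the standard vertex-size bound for LPs. The bound is independent of $n$ in its degree because it is the usual $O(n\cdot\text{rowlength})$ estimate, and $n\le\varphi$.

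The pieces will be the cones $\mathrm{cone}(c_{F_0},\dots,c_{F_j})$ over all strict chains $F_0\subsetneq F_1\subsetneq\cdots\subsetneq F_j$ of faces. Each chain has length at most $n$, so the number of chains is at most $((m+n+1)^{n})^{n}$. \emph{This is the main obstacle}: that count gives an exponent $O(n^2)$ rather than $O(n)$. To repair it, we will instead encode a maximal chain by the order in which rows are added: a chain of faces down from $K_1$ corresponds to choosing successively one new tight row, so it is a sequence of at most $n$ row indices from $[m+n]$. This gives at most $(m+n)^{n}$ chains, and enumerating them costs $(m+n+1)^{O(n)}$ LP calls. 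I expect this count to be the delicate point, so I will state explicitly that only maximal chains are used, that each is obtained by successively adding tight rows (with duplicates discarded), and that every face arises as the intersection of the facets in some such sequence.

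Simpliciality and covering follow by the standard induction on dimension. Vectors $c_F$ taken from a strict chain are affinely independent, because $c_{F_{j}}$ lies in the relative interior of $F_{j}$ and hence outside $\mathrm{aff}(F_{j-1})$. Any point of $F$ lies on a segment from $c_F$ to a point of a proper face, so the simplices over maximal chains cover $K_1$. Coning over these simplices gives simplicial cones covering $K$, and two of them meet along a common subchain cone. Finally, we scale each $c_F$ by the least common denominator of its entries to obtain an integer generator of polynomial size.
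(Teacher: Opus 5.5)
Your proposal is correct and follows essentially the same route as the paper: you take the section $\sigma^Tr=1$, enumerate faces by sets of at most $n$ tight rows, and compute a relative-interior point of each face by one slack-maximizing LP; you then take the chain-of-faces simplices, counting maximal chains as sequences of single-row selections, which is exactly how the paper avoids the $O(n^2)$ exponent. Your encoding-length argument also matches the paper's, since each $c_F$ is a basic LP solution computed directly from the original section data rather than from previously chosen points.
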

\begin{proof}
If $K=\{0\}$, return the single zero-dimensional cone with no
generators. Otherwise fix its
orthant sign vector $\sigma$ and put $a=\sigma$. On $K$,
$a^Tr=\|r\|_1>0$ for $r\ne0$. The section
$B=K\cap\{a^Tr=1\}$ is a rational polytope of dimension
$d-1$, where $d=\dim K$, described by at most $m$ inequalities
and its affine equations.

Enumerate its nonempty faces by subsets of at most $n$ defining
inequalities, using linear programming to discard empty intersections
and identify the rows identically tight on each face. This is
$(m+n+1)^{O(n)}$ work, including removal of duplicates. For each
face $F$ choose a rational point $p_F$ in its relative interior.
Such a point is obtained by maximizing a common slack $\eta$, with
$0\le\eta\le1$, in all rows not identically tight on $F$, while
imposing its affine equations. The optimum is positive whenever
such rows exist; a rational optimal basic solution has polynomial
encoding length. If no such rows exist, any rational feasible
point in the affine set suffices.

For each maximal chain of nonempty faces of $B$, take the convex
hull of their chosen points. These are simplices that cover $B$:
inductively subdivide each proper face in the same manner and cone
its boundary subdivision to $p_B$. The points in a chain are
affinely independent because each new point is outside the affine
hull of the preceding proper face. A maximal descending chain can
be specified by successively selecting one original inequality that
cuts the current face to a facet. There are $d-1$ selections, so
there are at most $\max(1,m)^{d-1}$ chains (choose a canonical
selection when several rows define the same facet). Coning these
simplices from the origin gives the required simplicial cones.

Every point $p_F$ is obtained directly from the original section
data, rather than from earlier chosen points. Standard determinant
bounds for rational linear systems therefore give a uniform
polynomial bound in $\varphi$ for every generator. Clearing the
denominators of at most $n$ coordinates preserves such a bound.
\end{proof}

For a simplicial cone $S=\mathrm{cone}(w_{1},\dots,w_{k})$, the faces
of $S$ are exactly the subcones $\mathrm{cone}(w_{i}:i\in I)$,
$I\subseteq[k]$; this is the property the refinement exploits.

\begin{lemma}[Refinement with a dimension-decreasing recursion]
\label{lem:unb-refine}
Assume $Q$ is copositive on $K$. Let $S_1,\ldots,S_N$ be the
decomposition from Lemma~\ref{lem:unb-triang}. In
$(m+n+1)^{O(n)}\mathrm{poly}(\varphi)$ time one can compute a
refinement into at most $n!N$ simplicial cones with rational
generators of polynomial encoding length such that every nonzero
face $F$ of every resulting cone satisfies
\[
 \min\{q(r):r\in F,\ a^Tr=1\}=0
 \quad\Longrightarrow\quad
 F\text{ has a flat extreme ray}.
\]
Here $a$ is the fixed orthant sign vector, so $a^Tr=\|r\|_1$ on
every cone under consideration.
\end{lemma}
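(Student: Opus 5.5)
The plan is a barycentric-type subdivision of each $S_\ell$ in which the barycenter of every face is replaced by a flat point whenever that face contains one in its relative interior. Fix one cone $S=\mathrm{cone}(w_1,\dots,w_k)$ from Lemma~\ref{lem:unb-triang}, normalized so that $a^Tw_i=1$. By the remark after that lemma, its faces are exactly the subcones $S_I=\mathrm{cone}(w_i:i\in I)$ for $I\subseteq[k]$.

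\textbf{Step 1: choose one point per face.} For each nonempty $I$, compute $\mu_I:=\min\{q(r):r\in S_I,\ a^Tr=1\}$, which is $\ge0$ by copositivity. The computation is the face-enumeration argument of Lemma~\ref{lem:unb-testA}, applied in the coordinates $r=\sum_{i\in I}\lambda_iw_i$, $\lambda\ge0$, $\sum\lambda_i=1$. For each subface $J\subseteq I$, solve the linear criticality system on $\mathrm{aff}(S_J\cap\{a^Tr=1\})$, on whose solution set $q$ is constant, and test by one LP whether that set meets the relative interior. This yields $\mu_I$ together with the family of subfaces $J\subseteq I$ whose relative interior contains a flat point. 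Define $p_I$ as follows.
\begin{itemize}
\item If some $r$ in the relative interior of $S_I$ has $q(r)=0$, let $p_I$ be such a point, taken as a basic solution of the LP.
\item Otherwise let $p_I=\frac1{|I|}\sum_{i\in I}w_i$.
\end{itemize}
Each $p_I$ is obtained directly from the $w_i$ and $Q$, so its encoding length stays polynomial with degree independent of $n$. The cost is $2^k\cdot(\text{number of subfaces})\cdot\mathrm{poly}(\varphi)$, which is absorbed in $(m+n+1)^{O(n)}$.

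\textbf{Step 2: subdivide along chains.} For every maximal chain $I_1\subset I_2\subset\cdots\subset I_k=[k]$ with $|I_j|=j$, form $\mathrm{cone}(p_{I_1},\dots,p_{I_k})$. There are $k!\le n!$ such chains. The standard barycentric argument shows these cones are simplicial and cover $S$ with disjoint interiors: each $p_{I_j}$ lies in the relative interior of $S_{I_j}$, hence outside $\mathrm{span}(S_{I_{j-1}})$. Applying this to every $S_\ell$ gives at most $n!N$ cones. I expect the main technical care to be here: making the subdivision work with arbitrary relative-interior points instead of true barycenters, and keeping the $p_I$ of polynomial size without iterated dependence. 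Both are handled the same way as in Lemma~\ref{lem:unb-triang}.

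\textbf{Step 3: verify the flat-ray property.} A nonzero face of a resulting cone has the form $F=\mathrm{cone}(p_{I_j}:j\in J)$ for some nonempty $J$. Suppose the minimum of $q$ on $F\cap\{a^Tr=1\}$ is $0$, attained at $r=\sum_{j\in J}\lambda_jp_{I_j}$ with $\lambda\ge0$, $r\ne0$. Let $j^*$ be the largest index with $\lambda_{j^*}>0$. Every $p_{I_j}$ with $j\le j^*$ lies in the closed face $S_{I_{j^*}}$, and $p_{I_{j^*}}$ lies in its relative interior with positive coefficient. Hence $r$ lies in the relative interior of $S_{I_{j^*}}$ and is flat. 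Step 1 therefore chose $p_{I_{j^*}}$ flat, and it is an extreme ray of $F$, as required.

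The argument never needs convexity of the flat set; it only uses that a positive combination of a relative-interior point with points of the closed face stays in the relative interior.
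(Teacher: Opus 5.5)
Your proposal is correct, but it reaches the result by a different route from the paper.

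\textbf{The paper's route.} The paper runs an adaptive recursion on faces $T$ of the original piece $S$. If $\min q$ on the section of $T$ is positive, $T$ is returned unsubdivided. Otherwise the paper takes any rational minimizer $z_T$, which may lie on the relative boundary of $T$. It recursively refines only the facets $G$ of $T$ not containing $z_T$, and cones $z_T$ over the resulting pieces; this is a pulling subdivision. The flat-ray property then follows by induction: a face of $\mathrm{cone}(z_T,R)$ either contains $z_T$ or is a face of $R$. The count comes from $b(d)\le d\,b(d-1)$.

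\textbf{Your route.} You perform one non-adaptive derived subdivision of every piece. For each face $S_I$ you choose a flat point in its relative interior whenever one exists. You then verify the property directly: the generator $p_{I_{j^*}}$ of largest chain index with positive weight forces the minimizer into the relative interior of $S_{I_{j^*}}$, so Step~1 must have chosen that generator flat.

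\textbf{Comparison.} Your route needs a slightly finer oracle than the paper's. You must detect a flat point in the relative interior of the whole face. By copositivity, such a point is a minimizer, so it lies in the stationary set of $q$ on $\mathrm{aff}(S_I\cap\{a^Tr=1\})$; hence the test is whether that set meets the relative interior with common value $0$. Your route also processes all $2^k$ faces and always outputs exactly $k!$ cones per piece, even where no flat point exists.

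In return, your construction is entirely non-recursive. Every $p_I$ is computed directly from the original generators of $S$, which makes the encoding-length bound immediate. Simpliciality and coverage reduce to the same derived-subdivision fact used in Lemma~\ref{lem:unb-triang}. The flat-ray check needs no induction.

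The paper's route needs only some minimizer per face and tolerates boundary minimizers. It also leaves faces with positive minimum untouched, so it can produce fewer cones.
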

\begin{proof}
For a zero-dimensional piece return that piece without generators.
For a positive-dimensional piece we use the face induction underlying
\cite[Lemma~7]{DelPiaDeyMolinaro17}, with an explicit computation
and count. Fix an original simplicial piece $S$ of dimension $d$.
The recursive inputs are faces $T$ of this original $S$. Minimize
$q$ on the simplex $T\cap\{a^Tr=1\}$ using the stationary-set
enumeration of Lemma~\ref{lem:unb-testA}. The same proof applies
to this section: it has at most $2^d$ faces. If the minimum is
positive, return $T$, since all its nonzero faces have positive
minimum. If the minimum is zero, choose a rational minimizer $z_T$.
For every facet $G$ of $T$ not containing $z_T$, recursively refine
$G$ and return the cones $\mathrm{cone}(z_T,R)$, where $R$ ranges
over the resulting pieces of $G$.

This recursion decreases the dimension of its input at each call.
In dimension one, return the ray. Coverage follows by extending
the segment from $z_T$ through a point of the section to a facet
not containing $z_T$, with boundary cases obtained by closure.
Each returned cone is simplicial because $z_T$ lies outside the
linear span of $G$. A face of $\mathrm{cone}(z_T,R)$ either
contains the flat extreme ray $z_T$, or is a face of $R$ and
inherits the required property by induction.

There are at most $d$ facets at dimension $d$. The number of
output cones thus obeys $b(1)=1$ and $b(d)\le d\,b(d-1)$, giving
$b(d)\le d!$. The number of recursive calls is $d^{O(d)}$, and
each call requires at most $2^d$ linear feasibility tests on
stationary sets. Crucially, all $T$ are faces of the original
$S$: each new ray is either an original generator or a minimizer
computed directly on such a face. There is no repeated
substitution of newly computed rays into the minimization data.
The input data of $S$, its face equations, the stationary systems,
and their rational basic solutions all have uniformly polynomial
encoding length. Multiplying this work by the initial count $N$
proves the asserted time and size bounds.
\end{proof}

\begin{corollary}[Quantitative positivity off the flat rays]
\label{cor:unb-eps}
Let $S'=\mathrm{cone}(w_{1},\dots,w_{k})$ be a refined piece,
$J=\{i:q(w_{i})=0\}$, and $S'_{\perp}:=\mathrm{cone}(w_{i}:i\notin J)$.
Then there is a rational $\varepsilon>0$ of size
$\mathrm{poly}(\varphi)$ with
$q(r)\ge\varepsilon\|r\|^{2}$ for all $r\in S'_{\perp}$.
\end{corollary}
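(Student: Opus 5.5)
The idea is to show that $q$ is strictly positive on the normalized section $S'_{\perp}\cap\{a^Tr=1\}$, take $\varepsilon$ to be a rational lower bound on that minimum, and then extend to all of $S'_{\perp}$ by homogeneity. If $J^c=\emptyset$, then $S'_\perp=\{0\}$ and any $\varepsilon$ works, so assume not. $S'_\perp$ is a face of the simplicial cone $S'$, because faces of a simplicial cone are exactly the subcones on subsets of generators. It is a nonzero face, and $S'\subseteq K$ lies in the fixed orthant.

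Put $\mu_\perp=\min\{q(r):r\in S'_\perp,\ a^Tr=1\}$. Copositivity of $Q$ on $K$ gives $\mu_\perp\ge0$. If $\mu_\perp=0$, the refinement property of Lemma~\ref{lem:unb-refine} says the face $S'_\perp$ has a flat extreme ray. Since $S'_\perp$ is simplicial, its extreme rays are exactly the rays of $w_i$ with $i\notin J$. So some $w_i$ with $i\notin J$ would satisfy $q(w_i)=0$, which contradicts the definition of $J$. Hence $\mu_\perp>0$.

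Next, I need $\mu_\perp$ to be rational with $\mathrm{poly}(\varphi)$ encoding length. The minimization is over a simplex with generators of polynomial size, by Lemma~\ref{lem:unb-refine}. The stationary-set enumeration of Lemma~\ref{lem:unb-testA} applies to it: on each face, $q$ is constant on the critical set, and that constant value is attained at a rational basic solution of polynomial size. So $\mu_\perp$ equals $q$ evaluated at a rational point of polynomial size, and is therefore rational of polynomial size.

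Finally, homogeneity. For $0\ne r\in S'_\perp$ we have $a^Tr=\|r\|_1>0$, so $q(r)=\|r\|_1^2\,q(r/\|r\|_1)\ge\mu_\perp\|r\|_1^2\ge\mu_\perp\|r\|^2$, using $\|r\|_1\ge\|r\|_2$. Take $\varepsilon=\mu_\perp$; any smaller positive rational of polynomial size also works. The main point to be careful about is the step identifying the flat extreme ray of the face $S'_\perp$ with one of the generators $w_i$. This identification relies on $S'$ being simplicial: its generators are linearly independent, so the extreme rays of any subcone are exactly its listed generators, up to positive scaling.
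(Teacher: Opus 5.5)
Your proposal is correct and follows the paper's proof. You show that the minimum of $q$ on the normalized section of the face $S'_\perp$ is positive, via the flat-extreme-ray property of Lemma~\ref{lem:unb-refine}, obtain rationality and polynomial size from the stationary-set computation of Lemma~\ref{lem:unb-testA}, and extend by homogeneity; your comparison $\|r\|_1\ge\|r\|_2$ even makes the paper's factor-$n$ norm adjustment unnecessary.
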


\begin{proof}
If $S'_{\perp}=\{0\}$, take $\varepsilon=1$. Otherwise $S'_{\perp}$ is a face of $S'$ whose extreme rays are exactly
$\{w_{i}:i\notin J\}$, none flat. If
$\min\{q:\ r\in S'_{\perp},\ \|r\|_{1}=1\}$ were $0$,
Lemma~\ref{lem:unb-refine} would give a flat extreme ray of
$S'_{\perp}$, a contradiction; so the minimum is positive, rational
of polynomial size by the computation of Lemma~\ref{lem:unb-testA},
and homogeneity gives the claim (adjusting norms costs a factor
$n$).
\end{proof}
\subsection{Characterization and algorithm}
With the refined decomposition in place, unboundedness admits an
exact characterization.

\begin{theorem}[Characterization of unboundedness]\label{thm:unb-char}
Let $P$ be a pointed orthant piece with $P\cap\Z^{n}\neq\emptyset$. IQP~\eqref{eq:iqp} is unbounded if
and only if
\begin{itemize}
\item[(A)] some $r\in K$ has $q(r)<0$; or
\item[(B)] $Q$ is copositive on $K$, and some \emph{flat integer
extreme ray} $w$ of the refined decomposition of
Lemma~\ref{lem:unb-refine} admits a point $x\in P\cap\Z^{n}$ with
$s_{w}(x)\le-1$.
\end{itemize}
\end{theorem}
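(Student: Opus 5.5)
We prove the two directions separately. Sufficiency is a direct computation along an integer ray. Necessity decomposes a sequence of integer points with objective tending to $-\infty$ using Meyer's theorem~\eqref{eq:rec-equal} and the refined decomposition of Lemma~\ref{lem:unb-refine}. Throughout we use the expansion $f(x+tw)=f(x)+t\,s_w(x)+t^2q(w)$.

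\emph{Sufficiency.} If (A) holds, Lemma~\ref{lem:unb-testA} already gives unboundedness. If (B) holds, take the integer flat ray $w\in K\cap\Z^n$ and $x\in P\cap\Z^n$ with $s_w(x)\le-1$. Then $x+tw\in P\cap\Z^n$ for every $t\in\Z_{\ge0}$. Since $q(w)=0$, we get $f(x+tw)=f(x)+t\,s_w(x)\le f(x)-t\to-\infty$.

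\emph{Necessity.} Assume the instance is unbounded and (A) fails, so $Q$ is copositive on $K$. By Meyer's theorem and pointedness, $P_I=\mathrm{conv}(\mathcal V)+K$ for the finite set $\mathcal V\subseteq\Z^n$ of vertices of $P_I$. Choose $x_k\in P\cap\Z^n$ with $f(x_k)\to-\infty$, and write $x_k=v_k+r_k$ with $v_k\in\mathrm{conv}(\mathcal V)$ and $r_k\in K$. The refined cones cover $K$ and are finitely many, so after passing to a subsequence all $r_k$ lie in one refined piece $S'=\mathrm{cone}(w_1,\dots,w_k)$. We take the generators scaled to integer vectors and define $J$ and $S'_\perp$ as in Corollary~\ref{cor:unb-eps}. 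Split $r_k=r_k^J+r_k^\perp$ with $r_k^J=\sum_{i\in J}\lambda_{k,i}w_i$, $\lambda_{k,i}\ge0$, and $r_k^\perp\in S'_\perp$. Then
\[
 f(x_k)=f(v_k)+\sum_{i\in J}\lambda_{k,i}\,s_{w_i}(v_k)+s_{r_k^\perp}(v_k)+q(r_k^J)+2(r_k^J)^TQr_k^\perp+q(r_k^\perp).
\]
We bound the terms one at a time.
\begin{itemize}
\item For $i,j\in J$, Lemma~\ref{lem:unb-cross} gives $w_i^TQw_j\ge0$. Hence $q(r_k^J)=\sum_{i,j\in J}\lambda_{k,i}\lambda_{k,j}\,w_i^TQw_j\ge0$.
\item The same lemma, applied with each flat $w_i$ and $w=r_k^\perp\in K$, makes the cross term nonnegative.
\item Corollary~\ref{cor:unb-eps} gives $q(r_k^\perp)\ge\varepsilon\|r_k^\perp\|^2$.
\item Since $v_k$ ranges over the compact set $\mathrm{conv}(\mathcal V)$, $f(v_k)$ is bounded, and $s_{r}(v_k)\ge-M\|r\|$ for a constant $M$.
\end{itemize}
Therefore $s_{r_k^\perp}(v_k)+\varepsilon\|r_k^\perp\|^2$ is bounded below. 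If every $s_{w_i}$ with $i\in J$ were nonnegative on $\mathrm{conv}(\mathcal V)$, then $f(x_k)$ would be bounded below, a contradiction.

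Hence some $i\in J$ and some $v\in\mathrm{conv}(\mathcal V)$ satisfy $s_{w_i}(v)<0$. The map $s_{w_i}$ is affine, so its minimum over $\mathrm{conv}(\mathcal V)$ is attained at a vertex $v\in\mathcal V\subseteq P\cap\Z^n$. Since $w_i$, $v$ and the data are integral, $s_{w_i}(v)=2w_i^TQv+c^Tw_i$ is an integer, so $s_{w_i}(v)\le-1$. The vector $w_i$ is a flat integer extreme ray of the refined decomposition, so (B) holds.

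The main obstacle is controlling the purely flat part $q(r_k^J)$ and the cross term. Corollary~\ref{cor:unb-eps} gives positivity only on $S'_\perp$, not on the span of the flat generators. The pairwise cross-term inequality, together with $\lambda_{k,i}\ge0$, is what removes these terms. A secondary point is checking that covering by finitely many refined cones justifies the subsequence step.
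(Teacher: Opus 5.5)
Your proof is correct and follows the paper's argument essentially step for step. Both use the decomposition $x_k=v_k+r_k$ over the integer hull via Meyer's theorem, pass by pigeonhole to a single refined piece, apply the same lower bounds from Lemma~\ref{lem:unb-cross} and Corollary~\ref{cor:unb-eps}, and finally move the negative slope to an integer vertex of $P_I$. The only difference is cosmetic: you argue by contradiction from nonnegativity of every flat slope on $\mathrm{conv}(\mathcal V)$, whereas the paper extracts an index $i^*$ with $s_{w_{i^*}}(v_k)<0$ for infinitely many $k$. You should also rename either the sequence index $k$ or the generator count in $S'=\mathrm{cone}(w_1,\dots,w_k)$, since they currently clash.
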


\begin{proof}
\emph{Sufficiency.} (A) is Lemma~\ref{lem:unb-testA}. For (B):
$x+tw\in P\cap\Z^{n}$ for $t\in\Z_{\ge0}$ and, since $w$ is flat,
$f(x+tw)=f(x)+t\,s_{w}(x)\to-\infty$. (With integer data,
$s_{w}(x)=2w^{T}Qx+c^{T}w\in\Z$ at integer $x$ and integrally scaled
$w$, so $s_{w}(x)<0$ iff $s_{w}(x)\le-1$.)

\emph{Necessity.} Suppose (A) fails, so $Q$ is copositive on $K$, and
let $x_{k}\in P\cap\Z^{n}$ with $f(x_{k})\to-\infty$; then
$\|x_{k}\|\to\infty$. The key step is to decompose over the
\emph{integer hull}: by Meyer's theorem and \eqref{eq:rec-equal},
\[
x_{k}\;=\;v_{k}+r_{k},
\qquad
v_{k}\in\mathrm{conv}\bigl(\mathrm{vert}(P_{I})\bigr),
\quad r_{k}\in K .
\]
The $v_{k}$ range over a fixed bounded set (vertices of $P_{I}$
have finite norm), so all quantities of the form $v_{k}^{T}Qw$,
$f(v_{k})$ below are bounded by a constant $\Gamma$ depending on the
instance only. Passing to a subsequence, all $r_{k}$ lie in one
refined piece $S'=\mathrm{cone}(w_{1},\dots,w_{k'})$; write
\[
r_{k}\;=\;r_{k}^{J}+r_{k}^{\perp},
\qquad
r_{k}^{J}=\sum_{i\in J}\lambda_{i,k}w_{i}\in\text{(flat rays)},
\quad
r_{k}^{\perp}=\sum_{i\notin J}\lambda_{i,k}w_{i}\in S'_{\perp},
\quad \lambda\ge0 .
\]
Expand $f(x_{k})=f(v_{k})+q(r_{k})+\ell_{k}(r_{k})$ where
$\ell_{k}(r):=2v_{k}^{T}Qr+c^{T}r=s_{r}(v_{k})$ (linear in $r$).
Now bound each piece from below:
\begin{itemize}
\item $q(r_{k}^{J})\ge0$: it is a nonnegative combination of
flat--flat cross terms, each $\ge0$ by
Lemma~\ref{lem:unb-cross}.
\item $2(r_{k}^{J})^{T}Qr_{k}^{\perp}\ge0$: flat directions have
nonnegative cross terms with all of $K\supseteq S'_{\perp}$
(Lemma~\ref{lem:unb-cross} again, applied to each $w_{i}$, $i\in J$).
\item $q(r_{k}^{\perp})\ge\varepsilon\|r_{k}^{\perp}\|^{2}$
(Corollary~\ref{cor:unb-eps}).
\item $\ell_{k}(r_{k}^{\perp})\ge-\Gamma'\|r_{k}^{\perp}\|$ with
$\Gamma'$ instance-bounded.
\item $f(v_{k})\ge-\Gamma$.
\end{itemize}
Hence
\[
f(x_{k})
\;\ge\;
\ell_{k}(r_{k}^{J})
+\varepsilon\|r_{k}^{\perp}\|^{2}-\Gamma'\|r_{k}^{\perp}\|-\Gamma
\;\ge\;
\ell_{k}(r_{k}^{J})-\Gamma'' ,
\]
so $f(x_{k})\to-\infty$ forces
$\ell_{k}(r_{k}^{J})=\sum_{i\in J}\lambda_{i,k}\,s_{w_{i}}(v_{k})
\to-\infty$, and since each $s_{w_{i}}(v_{k})$ is bounded, some
$i^{*}\in J$ has $s_{w_{i^{*}}}(v_{k})<0$ for infinitely many $k$.

It remains to convert the \emph{continuous} basepoint $v_{k}$ into an
\emph{integer feasible} one, and this is exactly what decomposing
over $P_{I}$ buys: $s_{w_{i^{*}}}$ is linear and
$v_{k}\in\mathrm{conv}(\mathrm{vert}(P_{I}))$, so
$s_{w_{i^{*}}}(v_{k})\ge\min\{s_{w_{i^{*}}}(u):u\in\mathrm{vert}(P_{I})\}$,
so some vertex $u$ of $P_{I}$, an \emph{integer feasible
point}, satisfies $s_{w_{i^{*}}}(u)<0$, i.e.\ $\le-1$ after
integral scaling of $w_{i^{*}}$. This is condition (B).

(Had we decomposed over $P$ itself, the basepoints would be vertices
of $P$, generally non-integral, and the negative slope could fail at
every integer point; the integer-hull decomposition is what closes
the gap. Note also that boundedness of $s_{w}$ from below on $P$ is
automatic: a recession direction $u$ with $2w^{T}Qu<0$ would
contradict Lemma~\ref{lem:unb-cross}.)
\end{proof}
Assembling the tests proves the main theorem of the section.

\begin{proof}[Proof of Theorem~\ref{thm:unb-main}]
Test integer feasibility and form the orthant pieces described
above. On every integer-feasible piece perform the following steps.
First run Test~A; a negative value supplies the quadratic-decrease
certificate. Otherwise apply Lemmas~\ref{lem:unb-triang} and
\ref{lem:unb-refine} and scale each flat generator $w$ to an integer
vector. For every such generator solve
\[
 \exists x\in P\cap\Z^n:\quad
 (2Qw)^Tx\le-c^Tw-1.
\]
A feasible solution gives a linear-decrease certificate $(x,w)$.
If no piece supplies a certificate, report a finite optimal value;
Theorem~\ref{thm:unb-char} proves boundedness below in every piece.
An integer-valued objective on a nonempty integer feasible set
attains its infimum whenever it is bounded below.

There are at most $2^n$ pieces, each with at most $m+n$ original
and orthant rows, and at most $(m+n)^{O(n)}$ refined generators.
Each integer linear feasibility call takes
$n^{O(n)}\mathrm{poly}(\varphi)$ time. The preceding lemmas give
uniform polynomial encoding bounds for the ray data and hence for
these linear systems and their small feasible witnesses. The
overall time is $(m+n)^{O(n)}\mathrm{poly}(\varphi)$, and the
returned basepoint and integer direction have polynomial encoding
length. All certificates lie in the original feasible set and its
recession cone because each orthant piece is a subset of that set.
\end{proof}

\subsection{Mixed-integer unboundedness}
The recession geometry is unchanged in the mixed case. The basepoint
belongs to $\Z^p\times\R^q$, however, so a negative slope need not be
an integer. We therefore minimize each slope by MILP and compare its
value with zero exactly.

\begin{corollary}[Mixed-integer unboundedness]\label{cor:m-unb}
For a rational polyhedron $P=\{x:Ax\le b\}$ with integer
quadratic data, one can decide whether MIQP~\eqref{eq:miqp0} is
infeasible, unbounded below, or has a finite attained optimum in
$(m+n)^{O(n)}\mathrm{poly}(\varphi)$ time. In the unbounded case
one can output a rational point
$x_0\in P\cap(\Z^p\times\R^q)$ and an integer nonzero direction
$\rho\in\mathrm{rec}(P)$, both of polynomial encoding length,
such that $f(x_0+t\rho)\to-\infty$ for integer $t\to\infty$.
Under ETH no $(m+\varphi)^{o(n)}\mathrm{poly}(\varphi)$ algorithm
exists, and unless $\mathrm{FPT}=\mathrm{W[1]}$ no
$f(n)\mathrm{poly}(m,\varphi)$ algorithm exists.
\end{corollary}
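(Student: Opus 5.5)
The plan is to run the pure test of Theorem~\ref{thm:unb-main} almost verbatim, changing only the feasibility and slope subroutines. First decide mixed-integer feasibility of $P\cap(\Z^p\times\R^q)$ with a fixed-integer-dimension MILP algorithm in $n^{O(n)}\mathrm{poly}(\varphi)$ time; this uses the same projection to integer coordinates as the (P)-leaf of Algorithm~\ref{alg:m-batch}. Then form the at most $2^n$ orthant pieces $P_\sigma$ and discard those without mixed-integer points. Each retained piece has a pointed recession cone $K$.

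On each retained piece, run Test~A (Lemma~\ref{lem:unb-testA}). Its computation involves only $K$ and is independent of integrality. If $\mu<0$, take $\rho$ as the integrally scaled minimizer and any rational mixed point $x_0$ returned by the MILP call. Then $x_0+t\rho$ stays in $P\cap(\Z^p\times\R^q)$ for integer $t\ge0$, since $\rho$ is integral, and $f\to-\infty$ quadratically. Otherwise $Q$ is copositive on $K$, and we compute the refined decomposition of Lemmas~\ref{lem:unb-triang} and~\ref{lem:unb-refine} with integrally scaled flat generators $w$. For each such $w$, solve the MILP $\min\{s_w(x): x\in P_\sigma,\ x\in\Z^p\times\R^q\}$. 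Its value is either $-\infty$, finite and rational of polynomial size, or attained. If the value is negative, or unbounded below, we extract a rational basepoint with $s_w(x_0)<0$ and use $\rho=w$, giving a linear-decrease certificate. Strict negativity is tested exactly by comparing the rational optimum with zero; the $\le -1$ integrality trick is no longer available.

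Correctness follows the necessity argument of Theorem~\ref{thm:unb-char}, with $P_I$ replaced by the mixed-integer hull $\mathrm{conv}(P\cap(\Z^p\times\R^q))$. For rational $P$ this hull is a rational polyhedron with the same recession cone $K$ (Meyer's theorem covers the mixed case). Its vertices are mixed-integer feasible points and finite in number, so $x_k=v_k+r_k$ holds with $v_k$ in a fixed bounded set. The estimates using Lemma~\ref{lem:unb-cross} and Corollary~\ref{cor:unb-eps} are unchanged. They give a flat generator $w_{i^*}$ and a hull vertex $u$ with $s_{w_{i^*}}(u)<0$, and the MILP detects this. A remark is needed here: slopes of $s_w$ along $K$ are nonnegative by Lemma~\ref{lem:unb-cross}, so the MILP is never unbounded below unless already negative. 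If no piece yields a certificate, the objective is bounded below. Attainment is the step I expect to be the main obstacle. It does not follow from integrality of values, as it did in the pure case. I would show that the mixed set is a finite union of fibers indexed by integer parts modulo the recession structure. On each fiber the quadratic is bounded below over a polyhedron, hence attains its minimum by the Frank--Wolfe theorem. I would then argue via the hull vertices and the decomposition $v+r$ that only finitely many fiber types matter, so the infimum is attained.

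The time bound is $2^n$ pieces times $(m+n)^{O(n)}$ generators times one MILP each, so $(m+n)^{O(n)}\mathrm{poly}(\varphi)$; encoding bounds are inherited from the lemmas. The lower bounds need no new work. At $q=0$ the problem is IQP unboundedness, so Theorem~\ref{thm:lb-unbounded} transfers both the ETH bound and W[1]-hardness directly.
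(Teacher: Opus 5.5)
\textbf{Gap: attainment of a finite optimum.} Everything except attainment matches the paper's proof. This includes mixed feasibility and orthant splitting, Test~A unchanged, and one exact MILP slope test $\eta_w<0$ per flat generator, bounded below by Lemma~\ref{lem:unb-cross}. It also includes the necessity argument run over the mixed-integer hull $P_I^{\mathrm{mix}}$, and the lower bounds inherited from the $q=0$ instances of Theorem~\ref{thm:lb-unbounded}.

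The gap is attainment. The statement asserts it (``finite attained optimum''), and you leave it as a plan. The mechanism you name does not work as stated. Writing the mixed set as finitely many bounded fibers translated by integral recession vectors does not give ``finitely many fiber types''. The reason is that $f$ is not invariant under such translations: translated fibers carry different objectives. Their Frank--Wolfe minima could therefore, a priori, decrease toward the infimum without reaching it. You need an additional idea to rule this out.

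\textbf{How the paper closes it: discreteness.} On a fiber $x_I=z$, move an optimal point within the stationary set of its minimal face, where $f$ is constant. It can be taken to a basic feasible point of a linear system. That system's coefficient matrix comes from a finite list determined by $(A,Q)$, and its right-hand side is integral in $(b,c,z)$ after a fixed scaling. Let $D_0$ be a common multiple of all these determinants. Then every nonempty fiber has an optimum in $\frac1{D_0}\Z^n$, hence an optimal value in $\frac1{D_0^2}\Z$. A bounded-below subset of that set has a least element.

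\textbf{Alternative: completing your $v+r$ route by descent.} This uses a step you did not state. Once all slope tests fail, $s_{w_i}\ge\eta_{w_i}\ge0$ at every mixed feasible point. Suppose $x=v+\sum_j\lambda_jw_j$ with $\lambda_i\ge t\in\Z_{\ge0}$. Then $x-tw_i$ lies in $P_I^{\mathrm{mix}}+K=P_I^{\mathrm{mix}}$, since the coefficient $\lambda_i-t$ stays nonnegative. It stays in $\Z^p\times\R^q$ because $w_i$ is integral. Moreover $f(x-tw_i)=f(x)-t\,s_{w_i}(x)\le f(x)$, since $w_i$ is flat.

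Now take a minimizing sequence $x_k=v_k+r_k^J+r_k^\perp$ in one refined piece. Reduce every flat coefficient to $[0,1)$; this does not increase $f$. Because $s_{w_i}(v_k)\ge0$, the estimate of Corollary~\ref{cor:unb-eps} bounds $\|r_k^\perp\|$. The reduced sequence therefore lies in a bounded set, which meets only finitely many compact fibers, so the minimum is attained.

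Either argument closes the gap; as written, the proposal does not.
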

\begin{proof}
Test mixed-integer feasibility and split into orthant pieces as in
Section~\ref{sec:unbounded}. For each nonempty piece, Meyer's
theorem~\cite{Meyer74} gives a rational mixed-integer hull
\[
 P_I^{\mathrm{mix}}=
 \mathrm{conv}(P\cap(\Z^p\times\R^q)),\qquad
 \mathrm{rec}(P_I^{\mathrm{mix}})=\mathrm{rec}(P).
\]
It is pointed, and each vertex belongs to the mixed feasible set:
an extreme point of its convex hull can only be represented as a
finite convex combination of feasible points if every point with
positive weight equals that extreme point. Test~A and the
decomposition and refinement depend only on the rational recession
data and remain unchanged. In the necessity proof of
Theorem~\ref{thm:unb-char}, use $P_I^{\mathrm{mix}}$ to obtain a
mixed feasible vertex with $s_w(x)<0$ for some refined flat ray.

For every flat integer generator $w$, solve the MILP
\[
 \eta_w=\min\{s_w(x):x\in P\cap(\Z^p\times\R^q)\}.
\]
It is bounded below: for every $r\in\mathrm{rec}(P)$,
the cross-term lemma gives $2w^TQr\ge0$, so the linear functional
is bounded below even on $P$. A bounded-below rational MILP on a
nonempty rational mixed set attains a rational optimum and is
solvable in $n^{O(n)}$ times a fixed polynomial in its input
length. Test $\eta_w<0$ exactly. If so, an optimal mixed point
and $w$ form a certificate. If all tests fail and Test~A finds no
negative curvature, the same expansion used in
Theorem~\ref{thm:unb-char} proves boundedness below.

For completeness, finite MIQP optima on rational polyhedra are
attained. Fix an integer fiber $x_I=z$. If the full problem is
bounded below, the fiber QP is bounded below; the Frank--Wolfe
attainment theorem for quadratic functions on polyhedra applies~\cite{FrankWolfe56}.
Choose an optimal point and the minimal face of its fiber.
Stationarity on that face, together with its active equations,
gives a rational linear system with coefficients determined by
$(A,Q)$ and an integer right-hand side determined by $(b,c,z)$
after a fixed denominator clearing. The objective is constant on
the stationary set of this face. Intersect that set with the
fiber polyhedron and choose a rational basic feasible point after
orthant splitting if needed. There are finitely many possible
coefficient matrices, all determined by $(A,Q)$ and the fixed
orthant rows. This is an existence argument over the finitely
many choices of active rows, not an enumeration of integer
fibers. Let $D_0$ be a common multiple of their nonzero
square determinants and of the fixed denominator clearings.
Every nonempty fiber therefore has an optimal point in
$\tfrac1{D_0}\Z^n$ and an optimal value in
$\tfrac1{D_0^2}\Z$. A nonempty bounded-below subset of this
discrete set has a least element. The denominator is used only
for this existence argument and is not enumerated by the test.

The number of pieces and rays and the encoding length of every
MILP are as in Theorem~\ref{thm:unb-main}, giving the same time
bound. The lower bounds follow from the pure-integer instances in
Theorem~\ref{thm:lb-unbounded}.
\end{proof}

\begin{remark}[Strict slopes in mixed problems]
Integer input data do not make $s_w(x)$ integer when $x$ has
continuous coordinates. For example, $\min -2zy$ subject to
$z\in\Z_{\ge0}$ and $3y=1$ is unbounded, but for the flat ray
$w=(1,0)$ its slope is always $-2/3$. Testing $s_w(x)\le-1$
would miss this instance. The exact MILP optimum test above
avoids any choice of a universal strictness threshold.
\end{remark}

\section{Dimension-dependent lower bounds}\label{sec:lb}
We match the linear order of dimension in the exponent of the
recession test. A single chord construction first gives a threshold
problem over a polytope, with integral YES witnesses and a uniform
gap. It then transfers to bounded IQP, and, after homogenization,
to noncopositivity and integer unboundedness. The continuous hardness
is known from Knauer--K\"onig--Werner~\cite{KKW15}, and IQP hardness
in the dimension is established by Herrmann~\cite{Herrmann26}.
Here the arbitrary-set gadget, its continuous gap, and controlled-inertia
transfers provide the specific lower bounds needed for our recession test.

\subsection{Source problem and encoding}
Our source is \textup{\textsc{Multicolored $k$-Sum}}: given $k$ finite sets
of positive integers $N_1,\ldots,N_k$, each of size at most $m_0$,
and an integer target $t$, select one number from each set summing
to $t$. 
This problem is W[1]-hard parameterized by $k$
\cite{AbboudLewiWilliams14}. One can obtain the needed many-one
colored form directly from multicolored clique. Use one selection
slot for each of its $h$ vertex colors and one for each of its
$\binom h2$ edge-color pairs. Give each vertex in a color a
distinct positive identifier at most $M$. For each endpoint
incidence between a vertex color and an edge-color pair, reserve
one digit. A vertex selection contributes its identifier in all
its incidence digits; an edge selection contributes $M$ minus
its endpoint identifier in the corresponding digit. The target
has value $M$ in every incidence digit. In a base larger than
$2M$, there is no carry, so equality of sums requires each
chosen edge to have exactly the chosen endpoints. All
unspecified digits are zero. Adding $1$ to every encoded number
and adding the number of slots to the target makes every
number positive without changing equality. The resulting number
of slots is $k=h+\binom h2$, a function of $h$, and the
construction has polynomial encoding length. Thus it proves
many-one W[1]-hardness, without relying on an OR of instances.

For the ETH statement use the bounded-bit source in
\cite[Theorem~5.1 and Corollary~5.1]{PatrascuWilliams10}:
an $N^{o(k)}$ algorithm for $k$-SUM on numbers of
$O(k\log N)$ bits would refute ETH. Their SAT-to-sum construction
already selects one partial assignment from each of $k$ blocks,
so these blocks give the colors without increasing $k$.
The integers and target can be made positive by a common shift
and the corresponding target shift, preserving the bit bound.
We use this restricted source for every ETH claim below.

\subsection{Chord construction and margin}
Fix a color class $N=\{\nu_{1}<\dots<\nu_{s}\}\subseteq\Z$. For each
consecutive pair define the \emph{chord line}
\[
\ell_{i}(u)\;:=\;(\nu_{i}+\nu_{i+1})\,u\;-\;\nu_{i}\nu_{i+1}\;-\;1,
\qquad i=1,\dots,s-1,
\]
i.e., the affine function through the points
$(\nu_{i},\nu_{i}^{2}-1)$ and $(\nu_{i+1},\nu_{i+1}^{2}-1)$ on the
downshifted parabola. Let
$\mathrm{env}(u):=\max_{i}\ell_{i}(u)$ on $[\nu_{1},\nu_{s}]$ and
\[
\phi(u)\;:=\;\mathrm{env}(u)-u^{2}.
\]

\begin{figure}[t]
\centering
\begin{tikzpicture}[scale=0.85]
  \draw[->] (0.1,-0.6) -- (5.4,-0.6) node[right] {$u$};
  \draw[dashed,domain=0.55:4.85,smooth,variable=\u]
    plot ({\u},{(\u*\u-1)/3.2}) node[right,yshift=-5pt,font=\small] {$u^{2}-1$};
  \draw[thin,domain=0.35:4.75,smooth,variable=\u]
    plot ({\u},{\u*\u/3.2}) node[above left,yshift=3pt,font=\small] {$u^{2}$};
  \draw[very thick]
    (1,0) -- (2,{3/3.2}) -- (3.5,{11.25/3.2}) -- (4.5,{19.25/3.2});
  \foreach \a in {1,2,3.5,4.5}
    \fill ({\a},{(\a*\a-1)/3.2}) circle (1.7pt);
  \foreach \a/\l in {1/{\nu_{1}},2/{\nu_{2}},3.5/{\nu_{3}},4.5/{\nu_{4}}}
    { \draw ({\a},-0.55) -- ({\a},-0.65);
      \node[below,font=\small] at ({\a},-0.62) {$\l$}; }
\end{tikzpicture}
\caption{The chord gadget for $N=\{\nu_{1},\dots,\nu_{4}\}$. The lower
boundary of the feasible region (thick) is the upper envelope of the
chords of $u^{2}-1$ (dashed); it touches $u^{2}-1$ exactly at the
elements of $N$, so $\phi(u)=\mathrm{env}(u)-u^{2}$ equals $-1$
exactly on $N$, while $\phi(u)+1$ has a lower bound proportional to the distance to $N$
(Lemma~\ref{lem:lb-margin}).}
\label{fig:gadget}
\end{figure}

\begin{lemma}[Uniform-depth membership penalty]\label{lem:lb-gadget}
On $[\nu_{1},\nu_{s}]$: $\phi(u)\ge-1$, with equality if and only if
$u\in N$. For $u\in[\nu_{i},\nu_{i+1}]$,
\[
\ell_{i}(u)-u^{2}\;=\;(u-\nu_{i})(\nu_{i+1}-u)\;-\;1 ,
\]
and $\ell_{i}$ attains the maximum in the definition of
$\mathrm{env}$ on that interval.
\end{lemma}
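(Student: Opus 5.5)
The plan is to verify the displayed identity first and derive the other claims from it. Expanding,
\[
(u-\nu_i)(\nu_{i+1}-u)-1=-u^2+(\nu_i+\nu_{i+1})u-\nu_i\nu_{i+1}-1=\ell_i(u)-u^2,
\]
so the identity is immediate. It also shows that $\ell_i(u)-u^2+1=(u-\nu_i)(\nu_{i+1}-u)$ is nonnegative exactly on $[\nu_i,\nu_{i+1}]$ and negative outside that interval.

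Next I will show that $\ell_i$ attains the envelope on $[\nu_i,\nu_{i+1}]$. For $j\ne i$, subtract the two chord lines:
\[
\ell_i(u)-\ell_j(u)=(u-\nu_i)(\nu_{i+1}-u)-(u-\nu_j)(\nu_{j+1}-u).
\]
Fix $u\in[\nu_i,\nu_{i+1}]$. The first term is $\ge0$. Since $j\ne i$, the interval $[\nu_j,\nu_{j+1}]$ meets $[\nu_i,\nu_{i+1}]$ at most in an endpoint. Hence $u$ lies outside $(\nu_j,\nu_{j+1})$, and the second product $(u-\nu_j)(\nu_{j+1}-u)$ is $\le0$. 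Therefore $\ell_i(u)\ge\ell_j(u)$, so $\mathrm{env}=\ell_i$ on that interval. An equivalent view is that the chords of the convex function $u^2-1$ at consecutive nodes form the piecewise-linear interpolant, and each chord lies below the parabola outside its own interval.

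Combining the two steps, $\phi(u)=(u-\nu_i)(\nu_{i+1}-u)-1\ge-1$ on $[\nu_i,\nu_{i+1}]$. These intervals cover $[\nu_1,\nu_s]$, which gives the inequality on the whole range. Equality holds iff $(u-\nu_i)(\nu_{i+1}-u)=0$, that is, iff $u\in\{\nu_i,\nu_{i+1}\}$. Taking the union over $i$, equality holds exactly on $N$.

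The degenerate case $s=1$ needs a convention. The envelope is then an empty maximum, and the domain is the single point $\nu_1$. I would define $\mathrm{env}(\nu_1)=\nu_1^2-1$, so that $\phi(\nu_1)=-1$ and the statement holds trivially. Apart from this convention there is no real obstacle; the only care needed is the sign argument for the second product at shared endpoints, where it vanishes.
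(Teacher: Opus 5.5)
Your proof is correct and takes the same route as the paper: expand the identity, then use it to see that each chord $\ell_j$ lies on or below $u^2-1$ outside $(\nu_j,\nu_{j+1})$, so $\ell_i$ dominates every other chord on $[\nu_i,\nu_{i+1}]$. The differences are minor: the paper checks $\ell_j\le\ell_i$ only at the endpoints $\nu_i,\nu_{i+1}$ and concludes because $\ell_i-\ell_j$ is affine, whereas you compare pointwise via $\ell_j(u)\le u^2-1\le\ell_i(u)$; and the case $s=1$, which you handle by a convention, is excluded in the paper's construction by padding every color class to $s_j\ge2$.
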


\begin{proof}
The displayed identity is a two-line expansion. On
$[\nu_{i},\nu_{i+1}]$ the product $(u-\nu_{i})(\nu_{i+1}-u)$ is
$\ge0$, vanishing exactly at the endpoints, so
$\ell_{i}(u)-u^{2}\ge-1$ with equality iff $u\in\{\nu_{i},\nu_{i+1}\}$.
For $j<i$, both endpoints $\nu_i,\nu_{i+1}$ lie on or to the
right of $\nu_{j+1}$; the same displayed identity shows
$\ell_j(\nu_i)\le\nu_i^2-1$ and
$\ell_j(\nu_{i+1})\le\nu_{i+1}^2-1$. For $j>i$ the analogous
inequalities hold because both endpoints lie on or to the left
of $\nu_j$. The affine function $\ell_i-\ell_j$ is therefore
nonnegative at both endpoints and throughout the interval.
Hence $\ell_i$ attains the envelope there, proving both claims. (Endpoints of
the box are elements of $N$, so box faces introduce no deeper
values.)
\end{proof}

The crucial features, unavailable to tangent-based gadgets: the depth
$-1$ is \emph{uniform} regardless of the spacing of $N$, and the
penalty is realized by linear inequalities ($v\ge\ell_i(u)$ for all $i$)
together with the \emph{concave} objective $v-u^{2}$: minimizing
pushes $v$ down onto $\mathrm{env}(u)$, so
$\min_{v}\,(v-u^{2})=\phi(u)$, and the pressure points the right way.
The gadgets compose into the threshold reduction.

An empty color class is an immediate NO-instance and may be
replaced by the fixed NO source $N_1=\{2,3\}$, $t=1$. Otherwise, given a
\textup{\textsc{Multicolored $k$-Sum}} instance, assume each color
class has $s_j\ge2$ elements; otherwise pad with a distinct positive dummy number larger than
both $t$ and the existing element, which can participate in no solution since all
numbers are positive, and which prevents the degenerate case of a
chordless slot whose $v_{j}$ would be unbounded below. Build
variables
$(u_{j},v_{j})_{j=1}^{k}$, $n=2k$, constraints
\[
P:\quad
\ell^{(j)}_i(u_j)\le v_j\le B_j\ \ \forall i,\qquad
\nu^{(j)}_{1}\le u_{j}\le\nu^{(j)}_{s_{j}},\qquad
\sum_{j=1}^{k}u_{j}=t ,
\]
where $B_j=\max_{\nu\in N_j}(\nu^2-1)$. The upper envelope in
each interval is a linear interpolation of its endpoint heights
and hence is at most $B_j$. Thus the new upper bounds preserve
the envelope minimizers, while making $P$ a polytope.
There are $m=O(km_0)$ defining inequalities, and we use the concave quadratic objective
\[
\psi(u,v)\;:=\;\sum_{j=1}^{k}\bigl(v_{j}-u_{j}^{2}\bigr),
\qquad \nu_{-}(\text{Hessian})=k .
\]

\begin{lemma}[Value characterization and margin]\label{lem:lb-margin}
$\displaystyle\min_{P}\psi\;=\;\min\Bigl\{\sum_{j}\phi_{j}(u_{j})
:\ \textstyle\sum_{j}u_{j}=t,\ u_{j}\in[\nu^{(j)}_{1},\nu^{(j)}_{s_{j}}]
\Bigr\}\;\ge\;-k$,
with equality iff the \textup{\textsc{$k$-Sum}} instance is a YES-instance.
On NO-instances $\min_{P}\psi\ge-k+\tfrac12$.
\end{lemma}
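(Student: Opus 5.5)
The plan is to eliminate the $v$-variables first, then read off the lower bound and the equality case from Lemma~\ref{lem:lb-gadget}, and finally obtain the margin by a rounding argument that uses the integrality of the elements of each $N_j$.

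\emph{Step 1 (eliminating $v$).} Fix a feasible $u$, so $\sum_j u_j=t$ and $u_j\in[\nu^{(j)}_1,\nu^{(j)}_{s_j}]$. The constraints on $v_j$ are $\ell^{(j)}_i(u_j)\le v_j\le B_j$ for all $i$. They involve only $u_j$ and $v_j$, and the objective $\psi$ is separable. Hence the minimum over $v_j$ is attained at $v_j=\mathrm{env}_j(u_j)$. This choice is feasible because $\mathrm{env}_j(u_j)\le B_j$, as noted when $P$ was built. The slot then contributes $\phi_j(u_j)$, which gives the displayed value identity. The remaining minimization is of a continuous function over a nonempty compact set: it is nonempty because $s_j\ge 2$ after padding and the box is nondegenerate, and in the infeasible case the value is $+\infty$ by convention. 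So the minimum is attained.

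\emph{Step 2 (bound and equality).} By Lemma~\ref{lem:lb-gadget}, each $\phi_j(u_j)\ge -1$, so the sum is at least $-k$. Equality holds iff every $\phi_j(u_j)=-1$, which by the same lemma means $u_j\in N_j$ for every $j$. Together with $\sum_j u_j=t$, this is exactly a \textsc{Multicolored $k$-Sum} solution. Conversely, a solution $(\nu^{(j)})_j$ gives the feasible point $u_j=\nu^{(j)}$, $v_j=(\nu^{(j)})^2-1$, with value $-k$.

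\emph{Step 3 (margin; the main step).} I would first quantify the gadget penalty. Suppose $u\in[\nu_i,\nu_{i+1}]$, write $g=\nu_{i+1}-\nu_i\ge1$ (the elements are distinct integers), and let $d=\min(u-\nu_i,\nu_{i+1}-u)=\mathrm{dist}(u,N)$. The identity in Lemma~\ref{lem:lb-gadget} gives
\[
\phi(u)+1=d(g-d)\ge d\cdot\tfrac g2\ge \tfrac d2 .
\]
The middle inequality holds because $d\le g/2$. So the excess satisfies $\phi_j(u_j)+1\ge \tfrac12\mathrm{dist}(u_j,N_j)$.

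Now take an optimal $u$ on a NO-instance and round each $u_j$ to a nearest element $\hat\nu_j\in N_j$. Then $(\hat\nu_j)_j$ is a selection, so on a NO-instance $\sum_j\hat\nu_j\ne t$. Since this sum is an integer, $|\sum_j\hat\nu_j-t|\ge1$. On the other hand,
\[
\Bigl|\sum_j\hat\nu_j-t\Bigr|=\Bigl|\sum_j(\hat\nu_j-u_j)\Bigr|\le\sum_j\mathrm{dist}(u_j,N_j).
\]
Therefore $\sum_j\mathrm{dist}(u_j,N_j)\ge1$, and
\[
\sum_j\phi_j(u_j)+k\ge\tfrac12\sum_j\mathrm{dist}(u_j,N_j)\ge\tfrac12 ,
\]
which is the claimed margin.

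I expect Step 3 to be the only real obstacle. Two points need checking there. The penalty must be linear in the distance, uniformly in the spacing $g$; the bound $g-d\ge g/2\ge 1/2$ supplies this. And the rounding must produce a genuine selection, which holds because each $\hat\nu_j$ lies in the corresponding class $N_j$. The padding convention also needs a check: the dummy elements are distinct integers, so $g\ge1$ still holds, and they make Step 1 well defined.
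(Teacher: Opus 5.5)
Your proposal is correct and follows the paper's proof essentially step for step. You eliminate $v_j$ by setting it to the envelope, sum the per-slot bound of Lemma~\ref{lem:lb-gadget}, and obtain the margin from $\phi_j(u_j)+1\ge\tfrac12\,\mathrm{dist}(u_j,N_j)$ plus rounding each $u_j$ to a nearest element of $N_j$, which yields an integer selection sum differing from $t$. The only cosmetic difference is that the paper runs the rounding argument for an arbitrary feasible $u$, so the compactness and attainment remark in your Step~1 is not needed for the margin (your Step~3 never uses optimality).
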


\begin{proof}
The first equality is the per-slot elimination of $v_{j}$ noted
above; the bound and the equality condition are
Lemma~\ref{lem:lb-gadget} summed over slots: $\sum\phi_{j}=-k$ forces
every $u_{j}\in N_{j}$, and then $\sum u_{j}=t$ is a YES-witness;
conversely a witness gives value $-k$. (No vertex analysis of $P$ is
required.) For the margin a pointwise argument suffices; no attainment or
exchange step is needed. Fix any feasible $u$. In slot $j$ let
$[a_{j},b_{j}]$ be the cell of $N_{j}$ containing $u_{j}$, let
$e_{j}\in\{a_{j},b_{j}\}$ be the nearer endpoint, and let
$d_{j}=|u_{j}-e_{j}|\ge0$. Writing $x=u_{j}-a_{j}$ and
$y=b_{j}-u_{j}$, we have $x+y=b_{j}-a_{j}\ge1$ (distinct integers)
and $d_{j}=\min(x,y)\le(x+y)/2$, so by the identity of
Lemma~\ref{lem:lb-gadget},
$\phi_{j}(u_{j})\ge d_{j}\,(x+y-d_{j})-1\ge d_{j}/2-1$. Summing,
$\sum_{j}\phi_{j}(u_{j})\ge-k+\tfrac12\sum_{j}d_{j}$. The $e_{j}$
form an endpoint selection with $\sum_{j}e_{j}\in\Z$ and
$\bigl|t-\sum_{j}e_{j}\bigr|=\bigl|\sum_{j}(u_{j}-e_{j})\bigr|
\le\sum_{j}d_{j}$. On a NO-instance $\sum_{j}e_{j}\ne t$, and both
are integers, so $\sum_{j}d_{j}\ge1$ and
$\min\psi\ge-k+\tfrac12$.
\end{proof}

\subsection{Continuous and bounded integer threshold problems}
\begin{theorem}[Continuous threshold hardness]\label{thm:lb-qp}
Deciding, for a concave quadratic $\psi$ with integer coefficients and a polytope given by linear inequalities
$P\subseteq\R^{n}$, whether $\min_{P}\psi\le\theta$, is W[1]-hard
parameterized by $n$; the reduction uses $n=2k$,
$m=O(k\,m_0)$, and $\theta=-k+\tfrac14$ (scale by $4$ for
integrality). Under ETH there is no
$(m+\varphi)^{o(n)}$-time algorithm.
\end{theorem}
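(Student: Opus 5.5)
The plan is a parameterized reduction from \textsc{Multicolored $k$-Sum} using exactly the construction of the preceding subsection. The threshold instance is $(\psi,P,\theta)$ with $n=2k$ variables, $m=O(km_0)$ inequalities and $\theta=-k+\tfrac14$. Correctness is read off Lemma~\ref{lem:lb-margin}. On a YES-instance, $\min_P\psi=-k\le\theta$. On a NO-instance, $\min_P\psi\ge-k+\tfrac12>\theta$.

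Several details must be checked. First, the degenerate cases are handled as described before the lemma: an empty class is replaced by the fixed NO source, and singleton classes are padded with a dummy number. Each slot then has $s_j\ge2$, so the envelope constraints exist, and $P$ is a polytope by the upper bounds $B_j$. Second, all coefficients are integers. The chord coefficients $\nu_i+\nu_{i+1}$ and $\nu_i\nu_{i+1}+1$ are integers, as are the bounds and $t$. The objective $\psi$ has Hessian $\mathrm{diag}(0,-2)$ per slot, so it is concave with integer coefficients. Multiplying $\psi$ and $\theta$ by $4$ gives an integer threshold $-4k+1$. Third, the encoding length $\varphi$ is polynomial in the source length, because the entries are products of source numbers. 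Since the parameter $n=2k$ is a function of $k$ alone and the map is polynomial-time, W[1]-hardness in $n$ follows from the many-one W[1]-hardness of the colored source recorded above.

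For the ETH statement I would start from the bounded-bit source of \cite{PatrascuWilliams10}: $k$ colored blocks, at most $N$ numbers per block, each with $O(k\log N)$ bits. The reduction then produces $m=O(kN)$ and $\varphi=\mathrm{poly}(k)\cdot N\cdot O(k\log N)$ after squaring. Suppose there were an algorithm running in time $(m+\varphi)^{o(n)}$. Then $(m+\varphi)^{o(n)}=(kN\log N)^{O(1)\cdot o(k)}=N^{o(k)}$, using $k\le N$ or absorbing polynomial factors of $k$. Adding the polynomial reduction time gives an $N^{o(k)}$ algorithm for $k$-SUM, which contradicts ETH.

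The main obstacle is bookkeeping in this exponent transfer. The polynomial blow-up in $\varphi$ and $m$ has degree bounded by an absolute constant, independent of $k$. A constant-power blow-up keeps $o(k)$ in the exponent, so this suffices. The factors of $k$ also need care. I would cite the source as hard even when $k$ is small compared with $N$, or note that $k^{o(k)}$ factors are absorbed once $N\ge k$; the case $N<k$ is handled by padding classes.
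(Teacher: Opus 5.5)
Your proposal is correct and follows the paper's own proof: the same chord reduction, separation of YES and NO instances at $\theta=-k+\tfrac14$ via the margin lemma, scaling by $4$, and the ETH transfer from the bounded-bit $k$-SUM source with $m+\varphi$ polynomial in $k$ and $N$, taken in the regime where $k$ is small relative to $N$ (the paper uses $k<N^{0.99}$). Your aside that the case $N<k$ ``is handled by padding classes'' is not needed. It would also not work as written, because padding raises $N$ to $k$ and gives only a $k^{o(k)}$ bound; but the source regime already excludes that case.
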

\begin{proof}
The gadget and margin lemmas separate YES and NO instances at
$\theta=-k+1/4$. Multiplying the objective and threshold by $4$
makes both integral. The construction takes polynomial time and
uses $n=2k$ variables. If source integers and the target have at most $B$ bits,
every coefficient has $O(B+\log k)$ bits, since the construction
uses only sums and products of two source integers and the target.
For the ETH source above $B=O(k\log N)$ and $m_0\le N$.
Thus $m+\varphi\le k^{O(1)}N^{O(1)}$, and, in the source regime
$k<N^{0.99}$, an exponent $o(n)=o(k)$ gives $N^{o(k)}$ time.
This contradicts the cited source theorem. The continuous
W[1]-hardness and ETH exclusion were already established for
norm maximization in \cite[Theorem~1.2]{KKW15}; the construction
here supplies the particular integer witnesses and explicit gap
used in the transfers below.
\end{proof}

\begin{proposition}[Transfer to bounded integer instances]
\label{prop:lb-integer}
The threshold problem for \emph{bounded IQP}, given integer
$Q,c,A,b$ with $\{Ax\le b\}$ a polytope and $\theta\in\Z$, decide
$\min\{x^{T}Qx+c^{T}x:\ Ax\le b,\ x\in\Z^{n}\}\le\theta$, is
W[1]-hard parameterized by $n$, with no $(m+\varphi)^{o(n)}$
algorithm under ETH. The hard instances have inertia $(0,k,k)$ with
$n=2k$.
\end{proposition}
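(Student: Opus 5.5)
We reuse the chord construction of Theorem~\ref{thm:lb-qp} unchanged and read it as an integer program. The variables are $x=(u_1,v_1,\dots,u_k,v_k)\in\Z^{2k}$. The constraints are those of $P$: $\ell^{(j)}_i(u_j)\le v_j\le B_j$, the box $\nu^{(j)}_1\le u_j\le\nu^{(j)}_{s_j}$, and the equation $\sum_j u_j=t$ written as two opposite inequalities. All of these have integer coefficients, since the chord lines have integer slope $\nu_i+\nu_{i+1}$ and integer intercept. We keep the objective $\psi=\sum_j(v_j-u_j^2)$. Its Hessian form $x^TQx$ has $Q$ with a $-1$ in each $u_j$ diagonal entry and zero elsewhere, and the $v_j$ enter only linearly through $c$. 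Hence the inertia is $(0,k,k)$ and $n=2k$. The feasible region is the polytope $P$, so the instance is bounded. We set the threshold to $\theta=-k$.

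\emph{YES direction.} Take a witness $(\nu^{(j)})_j$ with $\sum_j\nu^{(j)}=t$. Put $u_j=\nu^{(j)}$ and $v_j=\nu^{(j)2}-1$. These values are integers. By Lemma~\ref{lem:lb-gadget}, $\mathrm{env}_j(u_j)=u_j^2-1$ at points of $N_j$, so $v_j\ge\ell^{(j)}_i(u_j)$ holds for every $i$. Also $v_j\le B_j$ by the definition of $B_j$. The point is therefore integer-feasible, and $\psi=-k\le\theta$.

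\emph{NO direction.} Every integer feasible point is in particular a point of $P$. Lemma~\ref{lem:lb-margin} then gives $\psi\ge\min_P\psi\ge-k+\tfrac12>\theta$. This is the step where the uniform continuous gap does the work: integrality only shrinks the feasible set, so no separate integer argument is needed. The only care point is that the YES witnesses must be integral, and the $(\nu,\nu^2-1)$ construction ensures this.

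\emph{Complexity.} The reduction is the one from Theorem~\ref{thm:lb-qp}, so it runs in polynomial time with $n=2k$, $m=O(km_0)$, and coefficient bit length $O(B+\log k)$. W[1]-hardness in $n$ follows from the many-one colored $k$-Sum hardness described above. For ETH, an $(m+\varphi)^{o(n)}$ algorithm applied to instances from the Pătraşcu--Williams source would give $N^{o(k)}$ time, by the same parameter accounting as in the proof of Theorem~\ref{thm:lb-qp}. That contradicts the source theorem. The empty-class and padding conventions carry over verbatim, and the padding dummy numbers keep every $v_j$ bounded.
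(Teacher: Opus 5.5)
Your proposal is correct and follows the paper's proof. You restrict the chord polytope to integer points, use the integral witnesses $(u_j,v_j)=(\nu^{(j)},\nu^{(j)2}-1)$ for YES instances, and use the continuous margin of Lemma~\ref{lem:lb-margin} for NO instances. The only difference is cosmetic: you keep $\psi$ unscaled with threshold $-k$, while the paper uses $4\psi$ with threshold $-4k+1$. Both choices are valid, since $\psi$ already has integer coefficients and the gap of $\tfrac12$ strictly separates the two cases.
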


\begin{proof}
Restrict the instances of Theorem~\ref{thm:lb-qp} to
$(u,v)\in\Z^{2k}$ (the polytope is unchanged; $m$ and $\varphi$ are
unchanged up to constants). On YES-instances the continuous
optimum is attained at an \emph{integral} point: the witness has
$u_{j}=\nu_{j}\in\Z$ and $v_{j}=\ell_{i}(\nu_{j})=\nu_{j}^{2}-1\in\Z$,
so the integer minimum equals $-k$ (after clearing the scaling).
On NO-instances the integer minimum is at least the continuous
minimum, hence at least $-k+\tfrac12$ by
Lemma~\ref{lem:lb-margin}. Using the integer-coefficient objective $4\psi$ and integer
threshold $-4k+1$ separates the two cases: YES gives $-4k$ and
NO gives at least $-4k+2$. The added bounds $v_j\le B_j$ ensure
that the feasible region is a polytope.
\end{proof}

\subsection{Homogenization and unboundedness}
\begin{lemma}[Homogenization transfer]\label{lem:lb-homog}
With $P,\psi,\theta$ as above, introduce $s$ and set
\[
K=\left\{(u,v,s):
\begin{array}{ll}
v_j\ge(\nu_i^{(j)}+\nu_{i+1}^{(j)})u_j
       -(\nu_i^{(j)}\nu_{i+1}^{(j)}+1)s & \text{for all }i,j,\\
\nu_1^{(j)}s\le u_j\le\nu_{s_j}^{(j)}s,\quad v_j\le B_js
 & \text{for all }j,\\
\sum_j u_j=ts,\quad s\ge0 &
\end{array}\right\}.
\]
a rational polyhedral cone with $O(km_0)$ defining inequalities
in dimension $n+1$, and the homogeneous quadratic
$F(u,v,s):=-\sum_{j}u_{j}^{2}+s\sum_{j}v_{j}-\theta s^{2}$.
Then: $F$ takes a negative value on $K$ iff $\min_{P}\psi<\theta$.
\end{lemma}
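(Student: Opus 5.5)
The plan is to treat $K$ as the homogenization of $P$ and to use the fact that $F$ is exactly the degree-two homogenization of $\psi-\theta$. Then split $K$ into the slice $s>0$, which corresponds bijectively to $P$ up to scaling, and the slice $s=0$, which I will show is trivial because $P$ is a polytope.

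\emph{Cone structure.} Every defining inequality of $K$ is homogeneous linear in $(u,v,s)$, so $K$ is a rational polyhedral cone in $\R^{n+1}$. It has the same $O(km_0)$ rows as $P$, plus the single row $s\ge0$. This is a direct inspection.

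\emph{The slice $s>0$.} For $(u,v,s)\in K$ with $s>0$, put $(\bar u,\bar v)=(u/s,v/s)$. Dividing each row by $s$ gives exactly the rows of $P$, namely $\ell^{(j)}_i(\bar u_j)\le\bar v_j\le B_j$, $\nu^{(j)}_1\le\bar u_j\le\nu^{(j)}_{s_j}$ and $\sum_j\bar u_j=t$. Conversely, every $(\bar u,\bar v)\in P$ yields the point $(\bar u,\bar v,1)\in K$. A direct computation gives
\[
F(u,v,s)=s^2\Bigl(-\textstyle\sum_j\bar u_j^2+\sum_j\bar v_j-\theta\Bigr)=s^2\bigl(\psi(\bar u,\bar v)-\theta\bigr).
\]
Hence $F<0$ at such a point iff $\psi(\bar u,\bar v)<\theta$.

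\emph{The slice $s=0$.} This is the step that needs care, since it is where unboundedness of $P$ would otherwise leak in. With $s=0$, the box rows force $0\le u_j\le0$, so $u=0$. Then the chord rows give $v_j\ge0$ and the added bounds give $v_j\le B_js=0$, so $v=0$. Therefore $K\cap\{s=0\}=\{0\}$, where $F=0$. This is precisely the role of the upper bounds $v_j\le B_j$ introduced earlier. Without them, $(0,v,0)$ with $v\ge0$ would lie in $K$, although $F$ would still vanish there.

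Combining the two slices, $F$ takes a negative value on $K$ iff some point of $P$ has $\psi<\theta$. Since $P$ is a nonempty polytope and $\psi$ is continuous, the minimum $\min_P\psi$ is attained. So the existence of such a point is equivalent to $\min_P\psi<\theta$, which proves the lemma. If $P$ is empty, both sides are false, because $K=\{0\}$ by the same slice argument.
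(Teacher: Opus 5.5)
Your proposal is correct and follows the paper's proof essentially verbatim: the scaling identity $F(u,v,s)=s^2\bigl(\psi(u/s,v/s)-\theta\bigr)$ handles the part of $K$ with $s>0$, and at $s=0$ the scaled box rows force $u=0$ (with the chord and upper-bound rows forcing $v=0$), so that part contributes no negative values. Your remarks on attainment of $\min_P\psi$ and on the empty case make explicit what the paper leaves implicit. As you observe yourself, $F(0,v,0)=0$ for every $v$, so the upper bounds $v_j\le B_js$ are not needed for the sign claim; they only give $K\cap\{s=0\}=\{0\}$.
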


\begin{proof}
For $s>0$, $F(u,v,s)=s^{2}\bigl(\psi(u/s,v/s)-\theta\bigr)$ with
$(u/s,v/s)\in P$, giving both directions for $s>0$. For
$s=0$ the scaled box forces $u=0$ (the two box inequalities
read $0\le u_{j}\le0$), the chord rows and upper bounds force $v_j=0$, and
$F(0,0,0)=0$; so the $s=0$ part contributes no negative
values and no false positives.
\end{proof}

\begin{theorem}[{Noncopositivity / Test A is W[1]-hard in the
dimension}]\label{thm:lb-copos}
Given integer $Q\in\Z^{n\times n}$ and $A\in\Z^{m\times n}$, deciding
$\exists r:\,Ar\le0,\ r^{T}Qr<0$ is W[1]-hard parameterized by $n$,
and admits no $(m+\varphi)^{o(n)}$ algorithm under ETH. The hard
instances in dimension $n=2k+1$ have inertia $(1,k+1,k-1)$.
Consequently the complementary copositivity decision problem is
co-W[1]-hard under parameterized many-one reductions.
\end{theorem}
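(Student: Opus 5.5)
The plan is to compose the threshold reduction of Theorem~\ref{thm:lb-qp} with the homogenization of Lemma~\ref{lem:lb-homog}. The key point is that a strict inequality against a threshold lying inside the gap turns the non-strict threshold question into a strict-sign question.

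Starting from a \textsc{Multicolored $k$-Sum} instance, build $P$ and $\psi$ with $n_0=2k$. Take $\theta=-k+\tfrac14$, scaled by $4$ for integrality. Lemma~\ref{lem:lb-margin} then gives two cases. On YES-instances, $\min_P\psi=-k<\theta$. On NO-instances, $\min_P\psi\ge-k+\tfrac12>\theta$. The gap therefore converts ``$\le\theta$'' into ``$<\theta$'' with no boundary ambiguity.

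Next apply Lemma~\ref{lem:lb-homog} to $4F$, where
\[
4F=-4\sum_j u_j^2+4s\sum_j v_j-(-4k+1)s^2 .
\]
This is a homogeneous integer quadratic form. Its symmetric matrix may have half-integer off-diagonal entries; multiplying everything by $2$ again fixes integrality. The cone $K$ is written as $\{r:Ar\le0\}$ by splitting each equality into two inequalities. Then $r^TQr<0$ for some $r\in K$ if and only if the source is a YES-instance. The dimension is $n=2k+1$, and there are $m=O(km_0)$ rows. Every coefficient is a sum or product of at most two source integers, plus $t$ and $k$, so the bit sizes match those in Theorem~\ref{thm:lb-qp}.

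The consequences follow from the shape of the reduction.
\begin{itemize}
\item \textbf{W[1]-hardness.} The parameter becomes $2k+1$, which is a function of $k$; the source is many-one W[1]-hard, and the reduction is a parameterized many-one reduction.
\item \textbf{ETH bound.} An $(m+\varphi)^{o(n)}$ algorithm would give $N^{o(k)}$ time on the bounded-bit source of \cite{PatrascuWilliams10}, exactly as in the proof of Theorem~\ref{thm:lb-qp}.
\item \textbf{Co-W[1]-hardness.} The same map sends NO-instances to copositive instances. It is therefore a many-one reduction from the complement, which gives co-W[1]-hardness of the copositivity problem.
\end{itemize}

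It remains to verify the stated inertia. The matrix acts only in the variables $u_1,\dots,u_k$, $v_1,\dots,v_k$, $s$.
\begin{itemize}
\item The block $-4I_k$ on $u$ contributes $k$ negative eigenvalues.
\item The $(v,s)$ block has zero diagonal on the $v$'s, off-diagonal entries $2$ between each $v_j$ and $s$, and diagonal entry $4k-1$ on $s$. This is an arrowhead matrix.
\item The vectors $v$ with $\sum_j v_j=0$ lie in its kernel, giving $k-1$ zero eigenvalues.
\item On $\mathrm{span}\{\mathbf 1_v/\sqrt k,\,e_s\}$ it reduces to $\begin{pmatrix}0&2\sqrt k\\2\sqrt k&4k-1\end{pmatrix}$. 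The determinant is $-4k<0$, so this contributes one positive and one negative eigenvalue.
\end{itemize}
In total the inertia is $(1,k+1,k-1)$, matching the statement.

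The main obstacle I expect is the $s=0$ slice, where spurious negative values could appear; Lemma~\ref{lem:lb-homog} already handles it through the scaled box and the bounds $v_j\le B_js$. A secondary point is that the integrality scaling must preserve the inertia and the strict sign, which holds because the scaling is by positive constants.
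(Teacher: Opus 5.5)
Your proposal is correct and follows the paper's proof essentially step for step: you compose Lemma~\ref{lem:lb-homog} with Theorem~\ref{thm:lb-qp}, and you use the margin of Lemma~\ref{lem:lb-margin} at $\theta=-k+\tfrac14$ to make both sides strict. You then scale by $4$ and read off the inertia from the $u$-block and the $(v,s)$ arrowhead block, with kernel $\{s=0,\ \sum_j v_j=0\}$ and an indefinite $2\times 2$ complement, exactly as the paper does. The only superfluous remark is the extra factor $2$ for integrality: $4F$ already has the integer symmetric entries $Q_{s,v_j}=Q_{v_j,s}=2$, which is what you actually use in your inertia computation.
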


\begin{proof}
Lemma~\ref{lem:lb-homog} composed with Theorem~\ref{thm:lb-qp}
(strictness is handled by Lemma~\ref{lem:lb-margin}: YES gives
$\min\psi=-k<\theta$ and NO gives $\min\psi\ge-k+\tfrac12>\theta$, both
strict).
Multiply $F$ by $4$ to obtain a symmetric integer matrix:
the $u_j^2$ coefficients become $-4$, the $s^2$ coefficient
becomes $4k-1$, and each $sv_j$ coefficient is $4$, represented
by the two equal matrix entries $Q_{s,v_j}=Q_{v_j,s}=2$. Positive scaling preserves signs
and inertia. The $u$-block contributes $k$ negative eigenvalues.
In the $(v,s)$-block, the subspace $s=0$, $\sum_jv_j=0$ has
dimension $k-1$ and is in the kernel. On the complementary
two-dimensional space the matrix has negative determinant,
because the coefficient of $s\sum_jv_j$ is nonzero, so it has
one positive and one negative eigenvalue. This gives
$(1,k+1,k-1)$. Taking complements of the many-one reduction gives
the stated co-W[1]-hardness of copositivity.
\end{proof}

\begin{theorem}[{IQP unboundedness is W[1]-hard in the dimension}]
\label{thm:lb-unbounded}
Deciding whether $\min\{x^{T}Qx+c^{T}x:\ Ax\le b,\ x\in\Z^{n}\}$ is
unbounded is W[1]-hard parameterized by $n$, with no
$(m+\varphi)^{o(n)}$ algorithm under ETH. Consequently the
$(m+n)^{O(n)}$ bound of Theorem~\ref{thm:unb-main} is optimal up to
a constant factor in the exponent, and no
$f(n)\cdot\mathrm{poly}(m,\varphi)$ unboundedness test exists unless
$\mathrm{FPT}=\mathrm{W[1]}$.
\end{theorem}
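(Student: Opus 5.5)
We reduce from the noncopositivity problem of Theorem~\ref{thm:lb-copos}, instantiated by the homogenized chord instances of Lemma~\ref{lem:lb-homog}. Given the cone $K=\{r:A'r\le0\}$ in dimension $n=2k+1$ and the scaled integer quadratic $4F$, we output the IQP
\[
 \min\{\,r^T(4F)r:\ A'r\le0,\ r\in\Z^{n}\,\},
\]
with $c=0$ and $b=0$. The dimension is $n=2k+1$, the row count is $m=O(km_0)$, and the encoding length is polynomial. The composition therefore carries the ETH bound of Theorem~\ref{thm:lb-qp} (no $(m+\varphi)^{o(n)}$ algorithm) and the W[1]-hardness over unchanged.

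The key step is showing that this IQP is unbounded exactly when $4F$ takes a negative value on $K$. If $F(r)\ge0$ on all of $K$, every integer feasible point has nonnegative objective, and the value $0$ is attained at $r=0$, so the IQP is bounded. Conversely, suppose some real $r\in K$ has $F(r)<0$. We need an \emph{integer} point of $K$ with negative value. The set $\{r\in K:F(r)<0\}$ is relatively open in $K$. Since $K$ is a rational polyhedral cone, rational points are dense in it, so there is a rational $r'\in K$ with $F(r')<0$. Clearing denominators gives an integer $\rho\in K\cap\Z^n$ with $F(\rho)<0$. The multiples $t\rho$, $t\in\Z_{\ge0}$, are then feasible with objective $t^2F(\rho)\to-\infty$.

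We can avoid the density argument by using the explicit YES witness. Take $s=1$, $u_j=\nu_j$, and $v_j=\nu_j^2-1$; this point is integral, lies in $K$, and has $F=4(-k-\theta)<0$ after scaling, since $\theta=-k+\tfrac14$. On NO instances, Lemma~\ref{lem:lb-margin} together with Lemma~\ref{lem:lb-homog} gives $F\ge0$ on $K$. Hence YES maps to unbounded and NO maps to bounded, and the reduction is many-one.

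For the consequences stated in the theorem, note that an $(m+n)^{o(n)}$ algorithm would be an $(m+\varphi)^{o(n)}$ algorithm, contradicting ETH. So the exponent $O(n)$ in Theorem~\ref{thm:unb-main} is tight up to its constant factor. Similarly, an $f(n)\,\mathrm{poly}(m,\varphi)$ unboundedness test would place a W[1]-hard problem in FPT. The main obstacle is confirming that the integer restriction loses no negative direction, and the explicit integral witness handles this. The remaining care is in checking that the instance encoding from the ETH source keeps $m+\varphi\le k^{O(1)}N^{O(1)}$, exactly as in the proof of Theorem~\ref{thm:lb-qp}.
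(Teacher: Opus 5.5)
Your proposal is correct and follows the paper's proof. It uses the same instance $\min\{4F(z): z\in K\cap\Z^{2k+1}\}$ with $b=0$ and $c=0$, the same rational-density and denominator-clearing step followed by scaling $tz$, and the same observation that the objective is bounded below by $0$ when $F$ is copositive on $K$. Your alternative via the explicit integral YES witness $(u_j,v_j,s)=(\nu_j,\nu_j^2-1,1)$, which gives $4F=-1$, is also valid. It simply skips the density step by composing directly with the $k$-Sum source, rather than passing through the general equivalence between noncopositivity and integer unboundedness.
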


\begin{proof}
Take the instance $\min\{4F(z):z\in K\cap\Z^{n+1}\}$ ($b=0$, $c=0$,
feasible since $0\in K$). If some $r\in K$ has $F(r)<0$, then since
$K$ is a rational cone and $\{F<0\}$ is open, a rational and hence
(after clearing denominators) an \emph{integer} $z\in K\cap\Z^{n+1}$
has $F(z)<0$; then $F(tz)=t^{2}F(z)\to-\infty$ over $t\in\Z_{\ge0}$
and the IQP is unbounded. If $F\ge0$ on $K$, the objective is
bounded below by $0$ on all of $K\supseteq K\cap\Z^{n+1}$. So the
IQP is unbounded iff $F$ is not copositive on $K$;
apply Theorem~\ref{thm:lb-copos}. Scale-invariance of the cone is
what makes the integer and continuous questions coincide: there
is no integrality gap to argue.
\end{proof}

\subsection{Scope of the lower bounds}
\begin{table}[t]
\centering\small
\renewcommand{\arraystretch}{1.2}
\begin{tabular}{lll}
\toprule
Target & Dimension & Inertia $(\nu_+,\nu_-,\nu_0)$ \\
\midrule
Continuous or bounded integer threshold & $2k$ & $(0,k,k)$ \\
Noncopositivity or integer unboundedness & $2k+1$ & $(1,k+1,k-1)$ \\
\bottomrule
\end{tabular}
\caption{Transfers from the same multicolored $k$-Sum instance.
Each target has $O(km_0)$ inequalities and coefficient bit length
$O(B+\log k)$ when the source numbers have $B$ bits.}
\label{tab:lower-transfers}
\end{table}
The W[1] statement uses the many-one colored source; the ETH statement
uses the separate bounded-bit source specified above. Both transfers
preserve a linear relation between $k$ and the target dimension.
Thus the lower bound excludes $(m+\varphi)^{o(n)}$ time and matches
the order $O(n)$ in the recession exponent, without fixing its leading
constant. The source coefficients grow with the source input, but their
bit lengths remain polynomial. In particular, these results do not
contradict FPT in $(n,L)$.

The homogenized family has one positive eigenvalue and a growing
negative index. Fixed rank and fixed negative index are different
restrictions, discussed in Section~\ref{sec:discussion}. Taking the
complement changes the noncopositivity statement to co-W[1]-hardness
of copositivity. The construction supplements the known continuous
norm-maximization and IQP hardness~\cite{KKW15,Herrmann26} with an
explicit continuous gap for arbitrary color sets and transfers to
noncopositivity and integer unboundedness with the inertia shown above.

\section{Concave integer minimization}\label{sec:concave}\label{sec:concave-imp}
This section gives a companion result independent of the batching
algorithm. For a concave function on a polytope, some integer optimum
is a vertex of the integer hull. We construct a finite set containing
all such vertices, with a cardinality bound independent of the
constraint right-hand sides. The construction combines proximity
\cite{CookGerardsSchrijverTardos86} with the reflection idea behind
dyadic slack partitions \cite{CookHartmannKannanMcDiarmid92}.
We use an elementary arrangement count, so the explicit exponent is
slightly coarser than the sharper integer-hull counts in that literature.

\subsection{Proximity and slack cells}
Let $P=\{x:Ax\le b,\ Cx=d\}$ be a polytope with integer coefficient
matrices and independent equality rows. For integer feasibility,
rational inequality right-hand sides may first be rounded down.
A noninteger entry of $d$ makes the integer feasible set empty.
Thus we may assume $b,d$ integer. Put
$\Delta=\Delta\bigl(\binom AC\bigr)$ and $n'=n-\rank C$.
All distances and coefficient bounds below refer to the original
ambient coordinates; no lattice-coordinate transformation is made.

\begin{lemma}[Vertex proximity]\label{lem:proximity}
For every vertex $v$ of $P_I=\operatorname{conv}(P\cap\Z^n)$,
there is a vertex $\bar x$ of $P$ with
$\|v-\bar x\|_\infty\le n\Delta$.
\end{lemma}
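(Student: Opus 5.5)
Our plan is to obtain this from the Cook--Gerards--Schrijver--Tardos proximity theorem~\cite{CookGerardsSchrijverTardos86}, applied to an objective that singles out $v$. Since $v$ is a vertex of the polytope $P_I$, there is an integer vector $w$ such that $v$ is the unique maximizer of $w^Tx$ over $P_I$. Consider the linear program $\max\{w^Tx:Ax\le b,\ Cx=d\}$. Its feasible set is the polytope $P$, so it attains its optimum on a nonempty face $F$ of $P$. Its integer version has $v$ as unique optimum.

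The proximity theorem states the following. For the system written as $Ax\le b$ together with $Cx\le d$ and $-Cx\le -d$, every optimal LP solution $\bar x$ has an optimal integer solution $z$ with $\|z-\bar x\|_\infty\le n\Delta'$. Here $\Delta'$ is the maximum subdeterminant of the stacked matrix. Negating rows of $C$ does not change absolute subdeterminants, and a square submatrix using both $c_i$ and $-c_i$ is singular. Therefore $\Delta'=\Delta\bigl(\binom AC\bigr)=\Delta$.

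Apply this with $\bar x$ a vertex of $F$, which is also a vertex of $P$. The integer optimum is unique, so $z=v$, and we get $\|v-\bar x\|_\infty\le n\Delta$ as claimed.

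The step needing care is the exact form of the proximity statement. CGST prove it for systems $Ax\le b$ with integer $A$ and a maximization objective, under the hypothesis that both the LP and the IP are feasible and bounded. Both conditions hold here: $P$ is a polytope, and $P\cap\Z^n\neq\emptyset$ because $v$ exists.

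We must also use the "for each optimal LP solution there is a nearby optimal integer solution" direction, applied to a vertex $\bar x$. Uniqueness of the integer optimizer then forces that nearby solution to be $v$. If one worries that the theorem is stated only for some LP optimum, the standard statement covers every optimal LP solution, vertices included, so no modification is needed.

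One more check is that equality rows counted twice do not inflate the dimension factor. The factor in CGST is the number of variables $n$, not the number of rows, so the bound stays $n\Delta$. A finer bound with $n'$ in place of $n$ is not needed.
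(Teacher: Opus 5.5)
Your proposal is correct and matches the paper's proof. Both arguments expose $v$ by a linear objective, take an optimal vertex $\bar x$ of the LP relaxation over $P$, and invoke the Cook--Gerards--Schrijver--Tardos proximity theorem for that LP optimum. Uniqueness of the integer optimum then identifies the nearby integer solution with $v$, and the equality rows are handled as pairs of opposite inequalities, which does not increase $\Delta$.
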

\begin{proof}
Choose a linear objective uniquely minimized over $P_I$ at $v$,
and choose an optimal vertex $\bar x$ of its relaxation over $P$.
The integer-programming proximity theorem
\cite{CookGerardsSchrijverTardos86} gives an integer optimal
solution within $n\Delta$ of $\bar x$. Uniqueness makes it $v$.
Equality rows can be represented by opposite inequalities without
increasing the maximum absolute subdeterminant.
\end{proof}

\begin{lemma}[Dyadic reflection cells]\label{lem:reflecting}
Let $R=\{x:Tx\le a,\ Cx=d\}$ have integer data, and suppose
$0\le a_i-t_i^Tx<2^M$ throughout $R$, where $M\ge1$.
Partition each integer slack into $\{0\}$ and the ranges
$[2^j,2^{j+1})$, $0\le j<M$.
If a cell contains a vertex $v$ of $R_I$, it contains no other
integer point of $R$.

If $T$ has $r$ rows and $\rank C=n-n'$, a family of at most
\[
 \bigl(1+2r(M+2)\bigr)^{n'}
\]
relatively open polyhedral cells refining this partition covers $R$.
The cells and one integer feasible point from each nonempty cell
can be computed in
$\bigl(1+2r(M+2)\bigr)^{O(n'+1)}n^{O(n)}\mathrm{poly}(\varphi)$
time, where $\varphi$ includes the encoding of $R$ and $M$.
\end{lemma}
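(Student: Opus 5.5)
The lemma has two parts: a uniqueness claim for cells containing a vertex, and a counting/algorithmic claim. I would prove them separately.

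\emph{Uniqueness (reflection).} Suppose a cell contains a vertex $v$ of $R_I$ and another integer point $w\in R$, with $w\ne v$. Put $x'=2v-w$, which is integer and satisfies $Cx'=d$. For each row $i$ write $\sigma_i(x)=a_i-t_i^Tx$. Then $\sigma_i(x')=2\sigma_i(v)-\sigma_i(w)$. Both slacks lie in the same dyadic class, so there are two cases. If both are $0$, then $\sigma_i(x')=0$. If both lie in $[2^j,2^{j+1})$, then $2\sigma_i(v)-\sigma_i(w)>2^{j+1}-2^{j+1}=0$. In either case $\sigma_i(x')\ge0$, so $x'\in R\cap\Z^n$. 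Now $v=\tfrac12(w+x')$ with $w\ne x'$, which contradicts extremality of $v$ in $R_I$. For cells of the refined family the same argument applies, since each refined cell lies inside one dyadic cell.

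\emph{Counting.} Each dyadic class is cut out by the affine thresholds $t_i^Tx=a_i$, $t_i^Tx=a_i-2^j$ ($0\le j\le M$), and the integer-slack intervals $[2^j,2^{j+1})$ are unions of real cells between consecutive thresholds. So each row contributes at most $M+2$ parallel hyperplanes, and $R$ lives in the affine space $\{Cx=d\}$ of dimension $n'$. I would refine the partition by the faces of the arrangement of these at most $r(M+2)$ hyperplanes restricted to that $n'$-dimensional space. These faces are relatively open polyhedra, and each lies inside one dyadic cell once boundary slack values are assigned by the closed-left convention. The classical bound for the number of faces of an arrangement of $N$ hyperplanes in dimension $n'$ is $\sum_{i\le n'}\binom{N}{i}2^i\le(1+2N)^{n'}$, which gives the stated bound with $N=r(M+2)$. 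I would verify the standard inequality $\sum_i\binom Ni2^i\le(1+2N)^{n'}$ by expanding the right side.

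\emph{Computation.} I would enumerate sign vectors over $\{<,=,>\}$ for the hyperplanes incrementally, adding hyperplanes one at a time and using LP to test which faces survive, so the work is polynomial per face times the face count. For each face, the next step is to find an integer point of $R$ in it. Since it is a relatively open polyhedron with strict inequalities, and the data are integral, I would replace each strict inequality $>$ by $\ge+1$. This is valid because the slacks are integers at integer points. The result is a closed rational polyhedron, and I would run a Lenstra/Kannan feasibility call in $n^{O(n)}\mathrm{poly}(\varphi)$ time. Multiplying by the number of cells gives the claimed time bound.

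The main obstacle I expect is making the refinement cells genuinely match the half-open dyadic ranges, so that each arrangement face sits in exactly one dyadic cell and uniqueness transfers. Integrality of slacks is what resolves this: the thresholds can be shifted to half-integers, or strictness can be converted to $+1$.
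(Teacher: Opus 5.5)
Your proposal is correct and follows essentially the same route as the paper. It uses the reflection $2v-w$ for uniqueness, and the $r(M+2)$ slack hyperplanes at levels $0,1,2,4,\ldots,2^M$ inside the $n'$-dimensional affine space $Cx=d$ with the face count $\sum_{j\le n'}2^j\binom{H}{j}\le(1+2H)^{n'}$; like the paper, you then enumerate faces incrementally by LP and make one fixed-dimension ILP call per face after shifting strict inequalities by $1$, and your closed-left convention just spells out slightly more explicitly why each face refines the dyadic partition.
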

\begin{proof}
Suppose distinct integer points $v,y$ lie in the same cell.
For a zero-slack row, both have zero slack. Otherwise their slacks
$s_i(v),s_i(y)$ lie in some $[2^j,2^{j+1})$, so
\[
 s_i(2v-y)=2s_i(v)-s_i(y)>0.
\]
The reflected point $2v-y$ is integral, satisfies the equations,
and satisfies every inequality. Since $v$ is the midpoint of
the distinct feasible integer points $y$ and $2v-y$, it cannot
be a vertex of $R_I$.

Intersect the affine space $Cx=d$ with the hyperplanes
$a_i-t_i^Tx=0,1,2,4,\ldots,2^M$, where the positive values listed
are powers of two. There are at most $H=r(M+2)$ hyperplanes.
The total number of cells of all dimensions in an arrangement of
$H$ hyperplanes in dimension at most $n'$ is at most
$\sum_{j=0}^{n'}2^j\binom Hj\le(1+2H)^{n'}$.
The first inequality follows by inserting hyperplanes successively:
a new hyperplane creates at most twice the number of cells in its
induced arrangement of dimension one less. Restricting cells to $R$
does not increase the count, since $R$ is convex and its boundary
hyperplanes are included. Each resulting cell refines the slack
partition above.

One can enumerate the cells incrementally, retaining the signs
relative to the inserted hyperplanes and using linear programming
to discard empty systems. Strict inequalities are tested by a common
positive slack variable bounded above by $1$. Before integer
feasibility is tested, strict inequalities with integer coefficients
and right-hand sides are converted to non-strict inequalities by
shifting the right-hand side by $1$. A fixed-dimension ILP call
then returns an integer point of each nonempty integer cell. The
number of incremental operations is polynomial in $H$ times the
cell bound; rational linear algebra and the ILP calls have the
stated encoding-length dependence. For $n'=0$, there is at most
one cell, handled by checking its unique affine point.
\end{proof}

\subsection{A right-hand-side-independent candidate set}
\begin{proposition}[Candidate set for integer-hull vertices]\label{prop:vertex-bound}
Suppose $P=\{x:Ax\le b,\ Cx=d\}$ is a polytope with integer data,
$C$ has rank $k$, and $P\cap\Z^n\ne\emptyset$. Set
\[
 \begin{split}
 n'&=n-k,\qquad L_A=\max(1,\|A\|_{\max}),\qquad
 \Delta=\Delta\bigl(\binom AC\bigr),\\
 M&=\left\lceil\log_2(2n^2L_A\Delta+1)\right\rceil,\qquad
 H=(m+2n)(M+2).
 \end{split}
\]
There is a computable set
$\operatorname{vert}(P_I)\subseteq S\subseteq P\cap\Z^n$ with
\[
 |S|\le \max(1,m)^{n'}(1+2H)^{n'}.
\]
It can be constructed in
\[
 \max(1,m)^{n'}(1+2H)^{O(n'+1)}
 n^{O(n)}\mathrm{poly}(\varphi)
\]
time. For $n'=0$ the construction is a single feasibility check.
The cardinality depends on the coefficient matrices but not on $b,d$.
\end{proposition}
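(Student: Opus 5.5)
The plan combines Lemma~\ref{lem:proximity} with Lemma~\ref{lem:reflecting}, applied once per vertex of $P$. For the vertices of $P$ we cannot enumerate actual vertices without depending on $b$. We instead enumerate \emph{bases}: choices of $n'$ inequality rows of $A$ that, together with $C$, form a nonsingular system. There are at most $\max(1,m)^{n'}$ such choices. Every vertex $\bar x$ of $P$ is the unique solution of $Cx=d$ together with some such choice of tight rows $a_j^Tx=b_j$, $j\in J$, $|J|=n'$. For each $J$ we solve the system. If the solution is not in $P$ (checked by LP), we discard $J$; otherwise we obtain a candidate vertex $\bar x_J$.

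For each retained $J$, we form the region
\[
R_J=\{x\in P:\ \|x-\bar x_J\|_\infty\le n\Delta\},
\]
which by Lemma~\ref{lem:proximity} contains every vertex of $P_I$ whose associated relaxation vertex is $\bar x_J$. Every vertex of $P_I$ lies in $P_I$, hence is a vertex of $(R_J)_I$ whenever it lies in $R_J$. We then bound the slacks on $R_J$ uniformly in $b$. The $2n$ box rows $\pm x_i\le \pm(\bar x_J)_i+n\Delta$ have slack at most $2n\Delta$. We round their right-hand sides down to integers, which is harmless for integer points and only shrinks the range. For each row $a_i$ of $A$, the slack $b_i-a_i^Tx$ is not bounded on $R_J$ by itself, since it depends on $b_i-a_i^T\bar x_J$. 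This is the main obstacle. I would handle it by a case split for each row. If $b_i-a_i^T\bar x_J>nL_A\cdot n\Delta$, then the row is strictly slack on the whole box and can be dropped from $T$. Otherwise its slack on $R_J$ is at most $b_i-a_i^T\bar x_J+nL_A n\Delta\le 2n^2L_A\Delta$. Either way the retained rows have integer slacks in $[0,2n^2L_A\Delta]$, below $2^M$ by the choice of $M$. So $T$ has $r\le m+2n$ rows and satisfies the hypothesis of Lemma~\ref{lem:reflecting}. Dropped rows are implied inside the box, so the $R_J$ description is unchanged.

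Lemma~\ref{lem:reflecting} then gives at most $(1+2r(M+2))^{n'}\le(1+2H)^{n'}$ cells, and computes one integer point of each nonempty cell. Each cell containing a vertex of $(R_J)_I$ contains exactly one integer point, so the returned point is that vertex. We let $S$ be the union over $J$ of these returned points. Each point lies in $P\cap\Z^n$, and $S\supseteq\operatorname{vert}(P_I)$. The cardinality is at most $\max(1,m)^{n'}(1+2H)^{n'}$. The running time multiplies the basis enumeration with the cost of Lemma~\ref{lem:reflecting}, plus polynomial LP and linear algebra per $J$. The data of $R_J$ have encoding polynomial in $\varphi$ and $\log(n\Delta)$. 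For $n'=0$, $P$ is a single point and one feasibility check suffices. Neither count involves $b$ or $d$, which appear only through $\varphi$.
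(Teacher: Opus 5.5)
Your proposal is correct and follows the paper's proof essentially step for step. You enumerate the at most $\max(1,m)^{n'}$ basic solutions as candidate vertices $\bar x$ and intersect $P$ with the integer-rounded $\ell_\infty$ box of radius $n\Delta$ from Lemma~\ref{lem:proximity}. You drop rows whose slack at $\bar x$ exceeds $n^2L_A\Delta$, so that all retained slacks are at most $2n^2L_A\Delta<2^M$, and then apply Lemma~\ref{lem:reflecting} to each box. The only cosmetic difference is that you round the box right-hand sides after defining $R_J$, whereas the paper uses the rounded box from the start; this changes neither the integer points nor the vertices of $(R_J)_I$.
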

\begin{proof}
First handle $n'=0$ by solving $Cx=d$. Otherwise enumerate the
vertices $\bar x$ of $P$ by choosing $n'$ inequality rows independent
modulo $C$, solving the resulting square system, and checking
feasibility. Every vertex occurs among the at most
$\binom m{n'}\le\max(1,m)^{n'}$ choices, even when $P$ has smaller
affine dimension than $Cx=d$.

For each such vertex form the integer-rounded box
\[
 \widehat B_{\bar x}=\{x:
 \lceil\bar x_i-n\Delta\rceil\le x_i\le
 \lfloor\bar x_i+n\Delta\rfloor\quad(i\in[n])\}.
\]
It contains every integer point within distance $n\Delta$ of
$\bar x$ and is contained in that ambient box. Put
$R_0=n^2L_A\Delta$. If an original row has slack greater than
$R_0$ at $\bar x$, it is strictly satisfied throughout the box:
$|a_i^T(x-\bar x)|\le R_0$. Drop these rows. Every retained row
has slack at most $2R_0$ throughout the box wherever it is
satisfied. The $2n$ box rows have slack at most $2n\Delta\le2R_0$.
Thus the remaining rows, the box rows, and $Cx=d$ describe exactly
$R_{\bar x}=P\cap\widehat B_{\bar x}$, with at most $m+2n$
inequalities and all slacks less than $2^M$.

Apply Lemma~\ref{lem:reflecting} to this polytope and collect one
integer point from each cell that has one. If $v$ is a vertex of
$P_I$, proximity places it in some $R_{\bar x}$. A linear functional
exposing $v$ on $P_I$ also exposes it on $(R_{\bar x})_I$, a subset
containing $v$. Its cell therefore contains no other feasible
integer point, so the ILP call for that cell returns $v$.
All collected points remain feasible in the original $P$.
Multiplying the per-box count and time by the number of boxes
proves the bounds. Box endpoints and cell right-hand sides have
polynomial encoding length in the input; large $b,d$ enter only
through this arithmetic.
\end{proof}

\begin{proposition}[Concave integer minimization]\label{prop:concave}
Let $P=\{x:Ax\le b\}$ be a rational polytope with integer $A$,
and let $f$ be concave with exact rational evaluation in time
polynomial in the encoding lengths of its description and argument.
Minimizing $f$ on $P\cap\Z^n$ takes
\[
 ((m+n)n\varphi_A)^{O(n)}\mathrm{poly}(\varphi)
\]
time. With additional independent integer equations $Cx=d$, the
same conclusion holds with $\varphi_{A,C}$ replacing $\varphi_A$.
\end{proposition}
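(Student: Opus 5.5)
The plan is to reduce to Proposition~\ref{prop:vertex-bound} and then bound $\Delta$ and $m$ in terms of $\varphi_A$. Concavity on the polytope $P_I=\mathrm{conv}(P\cap\Z^n)$ implies that, if $P\cap\Z^n\neq\emptyset$, the minimum over $P\cap\Z^n$ is attained at a vertex of $P_I$. Indeed, every point of $P_I$ is a convex combination of its vertices, and $f$ at that point is at least the minimum of $f$ over those vertices; $P_I$ is a polytope because $P$ is bounded. So the algorithm proceeds in three steps. First, test integer feasibility with one fixed-dimension ILP call in $n^{O(n)}\mathrm{poly}(\varphi)$ time, and return $+\infty$ if the set is empty. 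Second, build the candidate set $S$ of Proposition~\ref{prop:vertex-bound} (with $C$ empty, or with the supplied equations $Cx=d$, after rounding the inequality right-hand sides down and rejecting a nonintegral $d$ as described there). Third, evaluate $f$ on every point of $S$ and return a minimizer. Since $S\subseteq P\cap\Z^n$ and $S\supseteq\mathrm{vert}(P_I)$, this returns an optimal point. Each point of $S$ has polynomial encoding length, so each evaluation costs $\mathrm{poly}(\varphi)$.

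The remaining work is the count. Proposition~\ref{prop:vertex-bound} gives $\max(1,m)^{n'}(1+2H)^{O(n'+1)}n^{O(n)}\mathrm{poly}(\varphi)$, with $H=(m+2n)(M+2)$ and $M=\lceil\log_2(2n^2L_A\Delta+1)\rceil$. So we need $M=\mathrm{poly}(n\varphi_A)$, specifically $M=O(n\varphi_A+\log n)$. For this, use Hadamard's inequality on a square submatrix of $A$ (or of $\binom AC$) of order at most $n$. Each row has Euclidean norm at most $2^{\varphi_A}$, because a row of binary length $\varphi_A$ has all entries bounded by $2^{\varphi_A}$ and at most $\varphi_A$ nonzero entries. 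This gives $\Delta\le 2^{n\varphi_A}$, and also $L_A\le 2^{\varphi_A}$. Hence $M=O(n\varphi_A+\log n)\le O(n\varphi_A)$ using $\varphi_A\ge1$, and $H=O((m+n)n\varphi_A)$. Combining the factors gives
\[
\max(1,m)^{n}\,\bigl(O((m+n)n\varphi_A)\bigr)^{O(n+1)}\,n^{O(n)}\,\mathrm{poly}(\varphi)=((m+n)n\varphi_A)^{O(n)}\mathrm{poly}(\varphi).
\]
In the equality case the same Hadamard estimate on the combined rows gives the bound with $\varphi_{A,C}$ in place of $\varphi_A$. The bound remains right-hand-side independent, because $b$ and $d$ enter only through the box and cell arithmetic.

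The main obstacle I anticipate is the Hadamard step, specifically making sure it uses the \emph{row} encoding length $\varphi_A$ and not the total length $\varphi$. If $\varphi$ entered here, $\log\Delta$ would be polynomial in $\varphi$ and the factor $M^{O(n)}$ would no longer separate as $\mathrm{poly}(\varphi)$ times a parametric term. The argument needs the per-row bound $\|a_i\|_2\le 2^{\varphi_A}$, so that $\log\Delta\le n\varphi_A$. A smaller point to handle is the degenerate case $n'=0$ or $m=0$. There $\max(1,\cdot)$ is used, and Proposition~\ref{prop:vertex-bound} already reduces the problem to a single feasibility check.
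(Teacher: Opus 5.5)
Your proposal is correct and follows the paper's proof: test integer feasibility, build the candidate set $S$ of Proposition~\ref{prop:vertex-bound}, return its best point by the concave vertex-minimizer argument, and substitute $M=O(n(\varphi_A+\log n))$ into the construction bound (with the combined matrix when $Cx=d$ is present). The only difference is how you bound $\Delta$. You use Hadamard's inequality on the rows; note that your two stated facts give $\|a_i\|_2\le\sqrt{\varphi_A}\,2^{\varphi_A}$ rather than $2^{\varphi_A}$, which is harmless. The paper instead uses $\Delta(A)\le(nL_A)^n$ with $\log L_A\le\varphi_A$, and both routes give the same $M$ and the same final bound.
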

\begin{proof}
Test integer feasibility first. If feasible, construct $S$ from
Proposition~\ref{prop:vertex-bound} and return a point minimizing
$f$ on $S$. A concave function on the polytope $P_I$ has a vertex
minimizer, and $S$ contains every such vertex, so the answer is exact.
When $C$ is absent,
$\Delta(A)\le(nL_A)^n$ and $\log L_A\le\varphi_A$, giving
$M=O(n(\varphi_A+\log n))$. Substitution in the construction bound
yields the displayed runtime. With equations, use the corresponding
entry and determinant bounds for the combined coefficient matrix.
\end{proof}
The additional $(m+n)$ accommodates the ambient box rows. For a
nonempty bounded polyhedron with no equality rows and $n\ge1$,
$m\ge n$, so this is also $(mn\varphi_A)^{O(n)}\mathrm{poly}(\varphi)$.
The explicit count in Proposition~\ref{prop:vertex-bound}
retains the reduced affine dimension $n'$ without changing coordinates.

\section{Applications}\label{sec:applications}
Theorem~\ref{thm:main} gives an explicit cost for any IQP subroutine
once its dimension, coefficient bounds, and outer enumeration count
are specified. We first record structured formulations, then give
these quantities for graph applications. All graph bounds below
assume $k\ge1$; the zero-parameter cases are direct.

\subsection{Structured optimization formulations}
Corollary~\ref{cor:tu} applies to quadratic objectives over integral
network-flow or transportation constraints. For example, directed
incidence equations together with integral arc bounds form a totally
unimodular system. Arbitrary quadratic arc costs can therefore be
handled with the runtime in that corollary; separable convex costs
have more specialized algorithms \cite{Minoux86,AhujaHochbaumOrlin03}.
Production-planning models \cite{PochetWolsey06} must be assessed by
their actual formulation: adding setup or coupling constraints can
change total unimodularity. Regularized integer least squares is
another IQP formulation \cite{AgrellErikssonVardyZeger02}.

\subsection{Graph algorithms and their parameters}
For a vertex cover or clique modulator $X$ of size $k$, partition
$V(G)\setminus X$ by neighborhoods in $X$. Let $T\le2^k$ be the
number of nonempty types. In arrangement formulations, count variables
specify how many vertices of each type lie in each of the $k+1$ gaps
between ordered modulator vertices. Instances with $T=0$ are handled directly. Auxiliary position variables are
included in the IQP dimension whenever they are used.

\begin{table}[H]
\centering\small
\renewcommand{\arraystretch}{1.25}
\begin{tabular}{P{0.31\textwidth}P{0.15\textwidth}P{0.19\textwidth}P{0.23\textwidth}}
\toprule
Problem and parameter & IQP variables & Coefficients & Outer enumeration \\
\midrule
Optimal Linear Arrangement; vertex cover $k$ &
$O(kT)$ & $L_A=1$, $\Delta=1$, $L_Q=O(k^2 2^k)$ & $k!$ \\
Exact Crossing Number; vertex cover $k$ &
$T'\le2^k(k-1)!$ & $L_A=1$, $\Delta=1$, $L_Q=O(k^2)$ &
$2^{k^{O(k)}}$ cluster drawings \\
Minimum Sum Vertex Cover; clique modulator $k$ &
$O(kT)$ & $L_A=1$, $L_Q=O(k)$ & $k!$ \\
\bottomrule
\end{tabular}
\caption{Formulation parameters for the cited reductions. Objective
polynomials are multiplied by a fixed constant where needed to obtain
symmetric integer matrices. The graph size enters the encoding length
polynomially. The outer enumeration must be included in the total time.}
\label{tab:applications}
\end{table}

\paragraph{Optimal Linear Arrangement.}
This problem minimizes $\sum_{uv\in E(G)}|\sigma(u)-\sigma(v)|$
over vertex orderings $\sigma$. Lokshtanov's reduction
\cite[Section~3]{Lokshtanov17} uses $k!$ cover orders,
$O(kT)$ gap-count variables, and a quadratic objective with
half-integral coefficients bounded by $O(k^2 2^k)$.
Its constraints are nonnegativity and one sum equation per type.
These disjoint sum rows together with unit rows are TU, and
$m=O(kT)$. Scaling the objective and applying
Corollary~\ref{cor:tu} yields
\[
 k!\,(kT\cdot k^2 2^k)^{O(kT)}\,|G|^{O(1)}.
\]
Here and below $|G|$ denotes the graph encoding length.

\paragraph{Exact Crossing Number.}
For simple graphs, this problem minimizes the number of edge crossings
in a drawing. Hlin\v{e}n\'{y}--Sankaran
\cite[Section~4]{HlinenySankaran19} enumerate abstract topological
cluster drawings and assign integer multiplicities to their clusters.
There are at most $T'=2^k(k-1)!$ clusters. The assignment constraints
are again disjoint sum equations and nonnegativity, hence TU.
Each cluster has at most $k$ incident edges; a good drawing has at
most one crossing per edge pair, giving $L_Q=O(k^2)$.
Consequently each IQP costs
\[
 (T'k)^{O(T')}\,|G|^{O(1)}.
\]
Multiplying by the $2^{k^{O(k)}}$ possible cluster drawings from the
cited enumeration gives a total of $2^{k^{O(k)}}|G|^{O(1)}$.
The explicit IQP cost is only one part of this total.

\paragraph{Minimum Sum Vertex Cover.}
The objective is $\sum_{uv\in E(G)}\min(\sigma(u),\sigma(v))$.
Aute--Panolan's IQP reduction
\cite[Section~4]{AutePanolan24} uses a clique modulator: deleting
its $k$ vertices leaves a clique. Keeping their auxiliary count,
position, and degree variables gives $O(kT)$ variables, coefficient
rows in $\{0,\pm1\}$, and quadratic coefficients of size $O(k)$.
After the $k!$ modulator orders are enumerated,
Theorem~\ref{thm:main} gives
\[
 k!\,(kT\cdot k)^{O(k^2T^2)}\,|G|^{O(1)}.
\]
Their vertex-cover algorithm is a separate combinatorial enumeration
with an explicit runtime; it is not an application of this IQP bound.

\paragraph{Densest $\kappa$-Subgraph as a formulation example.}
With a vertex cover $X$ of size $k$, use binary variables $z_i$ for
cover vertices and counts $w_t$ for independent-set types. Bounds
$0\le w_t\le|I_t|$ and $\sum_i z_i+\sum_t w_t=\kappa$, together
with the binary bounds, form a TU system. The number of selected
edges is
\[
 \sum_{\{i,j\}\in E(G[X])}z_i z_j
 +\sum_{i,t:\,i\in N(I_t)}z_i w_t.
\]
Negating and scaling gives an IQP with $k+T$ variables and bounded
quadratic coefficients, solvable in
$(k+T)^{O(k+T)}|G|^{O(1)}$ by Corollary~\ref{cor:tu}.
For this parameter a direct $2^k|G|^{O(1)}$ algorithm is faster:
fix the selected subset of $X$, then choose the required number of
independent vertices with largest numbers of neighbors in that
subset. Thus the formulation illustrates the theorem's applicability,
without providing an improved algorithm for this problem.

IQP also appears inside algorithms with structural restrictions
\cite{EibenGanianKnopOrdyniak19}. The same substitution applies once
the number and parameters of those subroutine calls are accounted for.

\section{Discussion and open problems}\label{sec:discussion}
Curvature batching confines numerical branching to a phase in which
the coefficient growth is controlled, followed by a single batch and
a linear leaf. The recession test uses a different mechanism and is
polynomial in the input length for every fixed dimension. These
results lead to four questions.

\paragraph{Finite-value optimization without coefficient-magnitude parameters.}
Can IQP with a finite optimum be solved in polynomial time for every
fixed $n$, with dependence on $A,Q$ only through their encoding lengths?
The unboundedness test removes infeasibility and recession from this
question, but the batching algorithm still enumerates numerical slack
and gradient values. In the mixed case, a related issue is extending
the stated polytope algorithm to arbitrary polyhedra with finite
optimum while preserving the same coefficient parameters.

\paragraph{Fixed rank and a fixed negative index.}
What is the parameterized complexity of bounded IQP at inertia
$(1,1,n-2)$, and of noncopositivity with a fixed negative index at
least two? Our homogenized reductions have inertia $(1,k+1,k-1)$:
they fix the positive index while the negative index and rank grow.
A low-dimensional image of the integer feasible set may be useful
\cite{EisenbrandRothvoss25}, but an algorithm would need bounds on
its construction cost and representation size, including dependence
on all slice right-hand sides.

\paragraph{Dependence on continuous dimension.}
Can the mixed batching exponent $O(n^2(q+1)^2)$ be reduced to
$O(n^2)$ uniformly in $q$? The current cost comes from the coefficients
of the projected rows and their cumulative determinant growth after
the continuous phase. The common-denominator construction already
prevents further denominator growth, so sharper projection bounds
are a natural next target.

\paragraph{Constraint-count dependence.}
Can one reduce the number of slack branches or the size of the
recession refinement? ETH rules out an exponent $o(n)$ for the
recession test, but the leading constant in the exponent remains
undetermined. The relevant count includes the whole decomposition
and all slope tests. For TU systems, sharper bounds on the geometry
of the feasible region may also improve the constraint phase or the
concave candidate-set construction.

These questions distinguish fixed-dimensional polynomial time from
FPT in the dimension. In particular, a continuous copositivity test
alone cannot decide integer unboundedness: integer feasibility and
flat slopes remain. Given an ILP $Az\le b$, minimizing $-t$ subject
to that system and an integer $t\ge0$ is unbounded exactly when the
ILP is feasible, although its quadratic matrix is zero.

\appendix
\section{Sequential branching and inertia}\label{sec:seq}
This appendix compares batching with a sequential variant of
Lokshtanov's framework. The variant uses the adjugate basis of
Lemma~\ref{lem:adjugate}, retains only nonnegative-curvature gradient
branches, and solves flat leaves by ILP. Its bound exposes the cost
of repeated determinant squaring; it is not needed for the batching
algorithm.

\begin{theorem}[Sequential bound]\label{thm:seq}
IQP can be solved in time $(nL)^{O(n^2\cdot 2^{\nu_+})}\cdot\mathrm{poly}(\varphi)$,
where $\nu_+=\nu_+(Q)$.
\end{theorem}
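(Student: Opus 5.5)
The plan is to analyze the sequential variant: at each node, either branch on a shallow slack (constraint step, Lemma~\ref{lem:trichotomy}(S)), or, for a single chosen $i\in\calG$, branch on the integer gradient value $z_i=2y_i^TQx+c^Ty_i$ with $|z_i|\le y_i^TQy_i$, appending one gradient row. Leaves with $\calF=[r]$ are solved as ILPs via Corollary~\ref{cor:linear}. Correctness is the argument of Proposition~\ref{prop:correct}, with $\calG'$ replaced by one index. Each step raises $\rank C$, so every path has at most $n$ steps. The running time then depends on how large $\Delta(C)$ can become, since the enumeration ranges at a node are $nL_A\Delta(C)+1$ and $2n^2L_Q\Delta(C)^2+1$.

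By Lemma~\ref{lem:growth}, a constraint step multiplies $\Delta$ by at most $nL_A$, and a gradient step maps $\Delta\mapsto 2n^2L_Q\Delta^2$. I will therefore bound the number $g$ of gradient steps on any path by $\nu_+(Q)$. With that bound, taking logarithms, $\log\Delta$ at most doubles (plus $O(\log nL)$) at each of $g$ gradient steps and grows additively by $\log(nL)$ at each of at most $n$ constraint steps. Starting from $\log\Delta(A)=O(n\log nL)$, this gives $\log\Delta\le 2^{g}\cdot O(n\log(nL))$ throughout. The branching factor per node is then $(nL)^{O(n2^{g})}$. Over at most $n$ levels this is $(nL)^{O(n^2 2^{g})}$, and multiplying by $m\le(2L_A+1)^n$ per constraint step and the $n^{O(n)}$ ILP leaves keeps the same form.

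The key step, which I expect to be the main obstacle, is showing $g\le\nu_+(Q)$. I would argue this by inertia. Suppose a gradient step is taken with $y=y_i$, $i\in\calG$, and $y^TQ\notin\rowsp C$. The new kernel is $V'=V\cap(Qy)^\perp$. If $y^TQy>0$, then $y\notin V'$, and $V=V'\oplus\Span\{y\}$ is a $Q$-orthogonal decomposition with $y$ positive. Hence $\nu_+(Q|_{V'})=\nu_+(Q|_V)-1$.

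The troublesome case is $y^TQy=0$ with $Qy\not\perp V$. Then $y\in V'$, and restricting to the hyperplane $V'$ may not drop $\nu_+$. I would first try to exclude this case from gradient branching. When $y^TQy=0$, the interval in (G) forces $z=0$. Moreover, any optimum deep along $y$ that also satisfies $s_y(x^*)=0$ lies on a line along which $f$ is constant, so one can shift $x^*$ along $\pm y$ until it becomes shallow without changing the value. This would replace the gradient branch by a constraint branch, so that gradient branches are retained only for $y^TQy>0$, matching the stated ``nonnegative-curvature gradient branches'' caveat. If shifting fails because the feasible set is unbounded along $y$, the precondition excluding unboundedness, together with the cross-term reasoning, should handle it.

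Once this is settled, each gradient step strictly decreases $\nu_+(Q|_V)$ from at most $\nu_+(Q)$, so $g\le\nu_+$. Substituting into the determinant recursion yields the bound $(nL)^{O(n^2\cdot 2^{\nu_+})}\mathrm{poly}(\varphi)$.
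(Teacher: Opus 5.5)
Your overall structure matches the paper's: the same sequential branching, the same determinant recurrence ($\log_B\Delta\mapsto 2\log_B\Delta+1$ at gradient steps, $+1$ at constraint steps), and the same bound $g\le\nu_+(Q)$ via inertia. Your $y^TQy>0$ case is identical to the paper's first case.

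The problem is the step you flag as the key one. Your premise that for $y^TQy=0$ the restriction to $V'=V\cap\ker(y^TQ)$ ``may not drop $\nu_+$'' is false. The paper (Lemma~\ref{lem:inertia}) shows it always drops, as follows.
\begin{itemize}
\item Since $y^TQ\notin\rowsp C=V^\perp$, pick $z\in V$ with $y^TQz=1$ and put $z'=z-\tfrac12(z^TQz)\,y$. Then $y^TQz'=1$ and $z'^TQz'=0$, so $\Span\{y,z'\}$ carries the form $\bigl(\begin{smallmatrix}0&1\\1&0\end{smallmatrix}\bigr)$, with exactly one positive eigenvalue.
\item The space $E=\{v\in V:y^TQv=z'^TQv=0\}$ has dimension $r-2$, and $V=\Span\{y,z'\}\oplus E$ is $Q$-orthogonal. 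Hence $\nu_+(Q|_V)=1+\nu_+(Q|_E)$.
\item On the other hand $V'=\Span\{y\}\oplus E$, because $y\in V'$, $y\notin E$, and the dimensions agree. Since $y$ is $Q$-orthogonal to all of $V'$, the form $Q|_{V'}$ is $0\oplus Q|_E$, so $\nu_+(Q|_{V'})=\nu_+(Q|_V)-1$.
\end{itemize}
So zero-curvature gradient branches can simply be kept, and no detour is needed. They have the single child $z=0$, but they still square $\Delta$, which is why they must be counted.

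Your detour, which drops zero-curvature branches and slides $x^*$ along the flat line, can be made to work. As written, though, its essential case is left at ``should handle it.'' The paper's cross-term lemma does not apply directly, since it concerns two recession directions under copositivity. The missing argument uses deepness of $x^*$ in another basis direction:
\begin{itemize}
\item Because $y^TQ\notin\rowsp C$, some basis vector $y_j$ has $y^TQy_j\ne0$.
\item If $x^*+ty$ were feasible for all integers $t\ge0$, then $y\in\mathrm{rec}(P)$. The integer points $x^*\pm y_j+ty$ would then be feasible for all $t\ge0$.
\item Using $s_y(x^*)=0$ and $y^TQy=0$, we get $f(x^*\pm y_j+ty)=f(x^*\pm y_j)\pm 2t\,y^TQy_j$. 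One of these tends to $-\infty$, contradicting the boundedness precondition.
\item Hence there is a last feasible shift $x^*+t_0y$. It is optimal and shallow along $y$, so some constraint child contains it.
\end{itemize}
With this route the correctness invariant must read ``the node contains some global optimum,'' not a fixed $x^*$.

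Either repair closes the proof. You should also say explicitly that the test of Theorem~\ref{thm:unb-main} is run first, and that duplicate rows are removed before you use $m\le(2L_A+1)^n$.
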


Recall the two improvements: (i)~an adjugate basis
with $\|y_i\|_\infty\le\Delta(C)$ instead of $\Delta(C)^2$, reducing the
growth per gradient step from cubing to squaring; and (ii)~a positive eigenvalue bound
showing that at most $\nu_+$ gradient steps can occur on any root-to-leaf
path.

\subsection{Positive eigenvalues and gradient steps}
A gradient branch has $y^TQy\ge0$ by Lemma~\ref{lem:deepcurv}.
We first characterize the existence of such a direction in the whole
kernel; this does not assert that every chosen basis contains one.

\begin{lemma}[Gradient branching characterization]\label{lem:gradiff}
A direction with nonnegative curvature and a gradient row outside
the current row space exists in $V$ if and only if\/ $\nu_+(Q|_V)\ge 1$.  That is, there exists
$y\in\ker(C)$ with $y^T Q\notin\rowsp(C)$ and $y^T Qy\ge 0$ if and
only if\/ $\nu_+(Q|_V)\ge 1$.
\end{lemma}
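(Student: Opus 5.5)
The plan is to translate both conditions into statements about the Gram matrix $M:=B^TQB$, where $B$ has columns forming a basis of $V=\ker(C)$. Then use the spectral theorem for the symmetric matrix $M$, together with Sylvester's law so that $\nu_+(Q|_V)=\nu_+(M)$ does not depend on the basis.

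\emph{Step 1 (reformulating the gradient condition).} Since $\rowsp(C)=\ker(C)^{\perp}=V^{\perp}$, a row vector $y^TQ$ lies in $\rowsp(C)$ if and only if $y^TQw=0$ for every $w\in V$. Write $y=Bs$ with $s\in\R^r$. Then $y^TQ\in\rowsp(C)$ iff $s^TB^TQB=0$, that is, iff $Ms=0$. Also $y^TQy=s^TMs$. So the statement becomes: there exists $s\in\R^r$ with $Ms\ne0$ and $s^TMs\ge0$ if and only if $M$ has a positive eigenvalue. If an integer direction is wanted, as in the branching use, it can be obtained afterwards by rational approximation and scaling whenever $s^TMs>0$. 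However, the lemma as stated is over $V$, so real $y$ suffice.

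\emph{Step 2 ($\Leftarrow$).} If $\nu_+(M)\ge1$, take an eigenvector $s$ with $Ms=\lambda s$ and $\lambda>0$. Then $Ms=\lambda s\ne0$ and $s^TMs=\lambda\|s\|^2>0$, so $y=Bs$ is the required direction.

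\emph{Step 3 ($\Rightarrow$).} Suppose $\nu_+(M)=0$, so $-M$ is positive semidefinite. Let $s$ satisfy $s^TMs\ge0$. Then $s^T(-M)s\le0$, which forces $s^T(-M)s=0$. Writing $-M=R^TR$ (for instance with $R=(-M)^{1/2}$), this gives $\|Rs\|^2=0$, hence $Rs=0$ and therefore $Ms=-R^TRs=0$. So every $y\in V$ with $y^TQy\ge0$ has $y^TQ\in\rowsp(C)$, which is the contrapositive of the forward direction.

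The only step needing care is Step 3: nonnegative curvature on a seminegative form forces the vector into the radical. The identity $\rowsp(C)=V^\perp$ in Step 1 makes the remaining translation routine. Neither step is expected to be a real obstacle.
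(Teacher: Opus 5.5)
Your proof is correct. It differs from the paper's mainly in the forward direction.

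The paper argues directly and basis-free. If $y^TQy>0$ there is nothing to show. If $y^TQy=0$, it uses $y^TQ\notin\rowsp(C)$ to pick $w\in V$ with $y^TQw\neq0$. It then perturbs to $y+tw$, whose curvature $2t\,y^TQw+t^2w^TQw$ is positive for small $t$ of the right sign.

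You instead pass to the Gram matrix $M=B^TQB$, identify the gradient condition with $Ms\neq0$, and prove the contrapositive. When $M$ is negative semidefinite, the factorization $-M=R^TR$ shows that $s^TMs\ge0$ forces $Ms=0$.

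Both arguments rest on the same fact: isotropic vectors of a semidefinite form lie in its radical. The paper's first-order perturbation is in effect an elementary proof of that fact. It avoids the spectral theorem and matrix square roots, and it explicitly produces a positive-curvature vector near $y$. Your coordinate reformulation instead reduces the whole lemma to a single statement about one symmetric matrix: some $s$ with $Ms\neq0$ has $s^TMs\ge0$ if and only if $M$ has a positive eigenvalue.

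For the reverse direction the two arguments coincide in substance, since any $v\in V$ with $v^TQv>0$ works. The paper shows this via $v^TQ\in\rowsp(C)\Rightarrow v^TQv=0$, and you show it with an eigenvector for a positive eigenvalue.
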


\begin{proof}
\medskip\noindent\emph{($\Rightarrow$)}
Suppose $y\in V$ satisfies $y^T Q\notin\rowsp(C)$ and $y^T Qy\ge 0$.
If $y^T Qy>0$, then $\nu_+(Q|_V)\ge 1$ immediately.
If $y^T Qy=0$, then since $y^T Q\notin\rowsp(C)$, there exists
$w\in V$ with $y^T Qw\neq 0$.  For every $t\in\R$ the vector $y+tw$
lies in~$V$, so
\[
  p(t)\;:=\;(y+tw)^T Q(y+tw)
       \;=\;2t\,(y^T Qw)+t^2\,(w^T Qw).
\]
Since $p(0)=0$ and $p'(0)=2\,y^T Qw\neq 0$, choosing $t$ small with
the same sign as $y^T Qw$ gives $p(t)>0$, so $\nu_+(Q|_V)\ge 1$.

\medskip\noindent\emph{($\Leftarrow$)}
Suppose $\nu_+(Q|_V)\ge 1$, so there exists $v\in V$ with $v^T Qv>0$.
If $v^T Q\in\rowsp(C)$, then $v^T Q=\lambda^T C$ for some~$\lambda$, and
for every $w\in V$ we would have $v^T Qw=\lambda^T Cw=0$.  In particular
$v^T Qv=0$, contradicting $v^T Qv>0$.  Therefore $v$ satisfies both
$v^T Q\notin\rowsp(C)$ and $v^T Qv\ge 0$, providing a valid gradient
direction.
\end{proof}

\begin{lemma}\label{lem:inertia}
Each gradient branching strictly reduces $\nu_+(Q|_V)$. That is, for $y \in \ker(C)$ with $y^T Q\notin\rowsp(C)$
and $y^T Qy\ge 0$, adding the row $2y^TQ$ to $C$ strictly reduces the number of positive eigenvalues of $Q$ restricted to the new kernel.
\end{lemma}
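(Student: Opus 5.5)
The plan is a direct inertia computation. Put $V=\ker C$ and $W=\ker C'$, where $C'$ is $C$ with the row $2y^TQ$ appended. Since $y^TQ\notin\rowsp(C)$, the functional $x\mapsto y^TQx$ is not identically zero on $V$. Hence
\[
W=\{w\in V: y^TQw=0\},
\]
a hyperplane in $V$, so $\dim W=\dim V-1$. The key observation is that, by definition of $W$, $y$ is $Q$-orthogonal to every $w\in W$. I would then split on the sign of $y^TQy\ge0$ and compare the two restricted forms by Sylvester's law of inertia, in each case exhibiting a $Q$-orthogonal splitting of $V$ that makes the count explicit.

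\emph{Case $y^TQy>0$.} Here $y\notin W$, so $V=W\oplus\Span\{y\}$. Because $y^TQw=0$ for all $w\in W$, a basis of $W$ extended by $y$ gives a block-diagonal matrix $\mathrm{diag}(B_W^TQB_W,\ y^TQy)$ for $Q|_V$. Sylvester's law then gives $\nu_+(Q|_V)=\nu_+(Q|_W)+1$.

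\emph{Case $y^TQy=0$.} Now $y\in W$, and $y$ lies in the radical of $Q|_W$. Since $y^TQ\notin\rowsp(C)$, choose $w_0\in V$ with $\alpha:=y^TQw_0\ne0$; compare the $(\Rightarrow)$ part of Lemma~\ref{lem:gradiff}. The plane $H=\Span\{y,w_0\}$ carries the matrix $\bigl(\begin{smallmatrix}0&\alpha\\ \alpha&w_0^TQw_0\end{smallmatrix}\bigr)$. Its determinant is $-\alpha^2<0$, so $Q|_H$ is nondegenerate with inertia $(1,1,0)$.

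Let $H^\perp=\{v\in V: v^TQy=v^TQw_0=0\}$. Nondegeneracy of $Q|_H$ gives $V=H\oplus H^\perp$ as a $Q$-orthogonal sum, hence $\nu_+(Q|_V)=1+\nu_+(Q|_{H^\perp})$.

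It remains to compute $\nu_+(Q|_W)$. First, $H^\perp\subseteq W$, since every $v\in H^\perp$ satisfies $y^TQv=0$. Second, $y\notin H^\perp$, because $y^TQw_0\ne0$. A dimension count ($\dim H^\perp=\dim V-2=\dim W-1$) therefore gives $W=\Span\{y\}\oplus H^\perp$. This decomposition is $Q$-orthogonal, and $y$ has zero curvature, so $Q|_W\cong(0)\oplus Q|_{H^\perp}$. Thus $\nu_+(Q|_W)=\nu_+(Q|_{H^\perp})=\nu_+(Q|_V)-1$.

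The main obstacle is this degenerate case $y^TQy=0$. There the direction $y$ itself survives into the new kernel, and naive interlacing only yields $\nu_+(Q|_W)\le\nu_+(Q|_V)$. The hyperbolic-plane splitting is what shows that one positive direction is genuinely lost. In both cases the decrease is exactly one, which is the claimed strict reduction.
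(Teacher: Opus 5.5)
Your proof is correct and follows the paper's argument: the same case split on $y^TQy$, the same block-diagonal basis when $y^TQy>0$, and, when $y^TQy=0$, the same hyperbolic-plane splitting $V=\Span\{y,w_0\}\oplus H^\perp$ with $W=\Span\{y\}\oplus H^\perp$ (the paper's vectors $u_i$ span exactly your $H^\perp$). The only difference is cosmetic: the paper replaces $z$ by $z'=z-\tfrac12(z^TQz)y$ so that the plane block becomes $\bigl(\begin{smallmatrix}0&1\\1&0\end{smallmatrix}\bigr)$, whereas you read off the inertia $(1,1,0)$ directly from the determinant $-\alpha^2<0$.
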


\begin{proof}
When branching on a gradient direction, we have $y\in V$ with $y^T Q\notin\rowsp(C)$
and $y^T Qy\ge 0$, and we add the row $2y^TQ$ to $C$. This means we pass to the smaller subspace
$W=V\cap\ker(y^T Q)=\{w\in V : y^T Qw=0\}$.  We show
$\nu_+(Q|_W)\le\nu_+(Q|_V)-1$ when $y^T Qy>0$ and when $y^T Qy=0$.

\medskip\noindent\emph{Case $y^T Qy>0$.}  Since $y^T Q y > 0$, the vector $y$ does not lie in $W$ (because $y^T Q y \neq 0$).  Since $W$ has dimension $r-1$ and $y \notin W$, we can form a basis for $V$ by taking $y$ alongside any basis $\{w_1, \ldots, w_{r-1}\}$ for $W$:
\[
  B_V = [y \mid w_1 \mid \cdots \mid w_{r-1}].
\]

Compute $B_V^T Q B_V$:
\[
  B_V^T Q B_V =
  \begin{pmatrix} y^T \\ B_W^T \end{pmatrix}
  Q
  \begin{pmatrix} y & B_W \end{pmatrix}
  =
  \begin{pmatrix} y^T Q y & y^T Q B_W \\ B_W^T Q y & B_W^T Q B_W \end{pmatrix},
\]
where $B_W = [w_1 \mid \cdots \mid w_{r-1}]$.  Every column $w_i$ of $B_W$ lies in $W$, so by definition $y^T Q w_i = 0$.  Therefore $y^T Q B_W = 0$ (the entire row of cross terms vanishes), and
\[
  B_V^T Q B_V =
  \begin{pmatrix} y^T Qy & 0 \\ 0 & B_W^T Q B_W \end{pmatrix}.
\]

Since $y^T Qy > 0$, we have $\nu_+(B_W^T Q B_W) = \nu_+(B_V^T Q B_V) - 1$.

\medskip\noindent\emph{Case $y^T Qy=0$.}  
As $y^T Q \notin \rowsp(C)$, $y^TQ$ is not identically zero on $V$. Then, there exists $z \in V$ with $y^T Q z \neq 0$.  After rescaling, assume $y^T Q z = 1$. 

Define
\[
  z' = z - \tfrac{1}{2}(z^T Q z)\, y.
\]
Then, we have $y^T Q z' =  1$ 
and $z'^T Q z' = 0$. Next, we construct a basis $B_V$ for $V$. Since $y$ and $z'$ are linearly independent, we include them in the basis. We complete the basis by picking $u_1, \dots, u_{r-2}$ from 
$W \cap \ker(z'^T Q)$, which has dimension $r - 2$ by rank-nullity 
(since $z'^T Q$ is nonzero on $W$ due to $z'^T Q y = 1$).

\[
  B_V = [y \mid z' \mid u_1 \mid \cdots \mid u_{r-2}].
\]

By construction, $y^T Q u_i = 0$ and $z'^T Q u_i = 0$ for every $i$, so $B_V^T Q B_V$ is block diagonal:
\[
  B_V^T Q B_V =
  \begin{pmatrix}
    0 & 1 & 0 \\
    1 & 0 & 0 \\
    0 & 0 & E
  \end{pmatrix},
\]
where $E_{ij} = u_i^T Q u_j$. The top-left block $\bigl(\begin{smallmatrix} 0 & 1 \\ 1 & 0 \end{smallmatrix}\bigr)$ has eigenvalues $\pm 1$, contributing exactly one positive eigenvalue.  Therefore
\[
  \nu_+(B_V^T Q B_V) = 1 + \nu_+(E).
\]

Finally, observe that every $u_i$ satisfies $y^T Q u_i = 0$, so $u_i \in W$.  The vector $y$ itself satisfies $y^T Q y = 0$, so $y \in W$.  Thus $\{y, u_1, \ldots, u_{r-2}\}$ are $r - 1$ linearly independent vectors in $W$ (they are a subset of the basis $B_V$).  Since $\dim(W) = r - 1$, they form a basis $B_W = [y \mid u_1 \mid \cdots \mid u_{r-2}]$. Then, we have

\[
  B_W^T Q B_W =
  \begin{pmatrix}
    0 & 0 \\
    0 & E
  \end{pmatrix}.
\]

The zero row/column contributes nothing to $\nu_+$, so
\[
  \nu_+(B_W^T Q B_W) = \nu_+(E) = \nu_+(B_V^T Q B_V) - 1. \qedhere
\]
\end{proof}

By Lemmas~\ref{lem:deepcurv},~\ref{lem:gradiff}, and~\ref{lem:inertia}, we can conclude that the total number of gradient steps on any root-to-leaf path is at most $\nu_+(Q)$: gradient branching requires $y^T Qy\ge 0$ (Lemma~\ref{lem:deepcurv}), which is only possible when $\nu_+(Q|_V)\ge 1$ (Lemma~\ref{lem:gradiff}), and each such branch consumes one positive eigenvalue (Lemma~\ref{lem:inertia}), so after $\nu_+$ steps no further gradient branch is possible.

\subsection{Running time analysis}
\label{sec:lokbound}

Our sequential algorithm is Lokshtanov's branching algorithm equipped with two
improvements: the adjugate basis from Section~\ref{sec:adjugate} and the
positive eigenvalue bound from Lemma~\ref{lem:inertia}.

\begin{algorithm}[H]
\caption{Sequential IQP Solver}\label{alg:seq}

\small
\textbf{Input:} $Q,c,A,b$ and independent equalities $Cx=d$.
\textbf{Precondition:} The integer objective is bounded below on the
feasible set (which may be empty). First apply
Theorem~\ref{thm:unb-main} to classify the original instance.
\textbf{Output:} The optimum on this node, or $+\infty$ if infeasible.
\begin{enumerate}[leftmargin=*,itemsep=3pt]
\item Discard an inconsistent affine system. Put $V=\ker C$ and
$r=\dim V$. If $r=0$, evaluate the unique solution only after
checking $Ax\le b$ and integrality; return.
\item Compute an adjugate basis $y_1,\ldots,y_r$ with
$\|y_i\|_\infty\le\Delta(C)$.
\item If $y_i^TQ\in\rowsp C$ for all $i$, solve the residual ILP
with the objective of Corollary~\ref{cor:linear}, and return its
quadratic value, or $+\infty$ if infeasible.
\item Initialize $\mathit{best}=+\infty$. For each
$a_j\notin\rowsp C$ and integer
$b_j'\in[b_j-nL_A\Delta(C),b_j]$, recurse after appending
$a_j^Tx=b_j'$; update $\mathit{best}$ with the returned value.
\item For each basis vector $y_i$ with
$y_i^TQ\notin\rowsp C$ and $y_i^TQy_i\ge0$, and each integer
$z$ with $|z|\le y_i^TQy_i$, recurse after appending
$2y_i^TQx=z-c^Ty_i$; update $\mathit{best}$.
\item Return $\mathit{best}$.
\end{enumerate}

\end{algorithm}

\begin{proof}[Proof of Theorem~\ref{thm:seq}]
First apply Theorem~\ref{thm:unb-main} and return immediately if the
instance is infeasible or unbounded below. Its
$(m+n)^{O(n)}\mathrm{poly}(\varphi)$ cost, after duplicate coefficient
rows have been removed while retaining the smallest right-hand side,
is absorbed in the claimed bound. This preprocessing itself is
polynomial in the input length. For a finite optimum, invoke
Algorithm~\ref{alg:seq} with no initial equalities.
We analyze its worst-case branching tree.
On any root-to-leaf path, each step adds one row to~$C$ and reduces
$\dim(\ker(C))$ by one, so there are at most $n$~steps total.  By
Lemmas~\ref{lem:gradiff} and~\ref{lem:inertia}, at most
$g \le \nu_+$ of these are gradient steps; if there are $s$
constraint steps, then $s+g\le n$, and in particular $s\le n$. 

To bound the determinants without assuming an order of steps, choose
$B\ge2$ with $nL_A,2n^2L_Q\le B=(2+nL)^{O(1)}$ and write
$e=\log_B\Delta$. A constraint step sends $e$ to at most $e+1$;
a gradient step sends it to at most $2e+1$. Over $s$ constraint
steps and $g$ gradient steps, in any order,
\[
 e\le 2^g(e_0+s+1)-1.
\]
This follows by assigning each additive increment the multiplier
from the gradient steps still to come. Thus only $s\le n$ and
$g\le\nu_+$ are needed.

The initial matrix consists of rows of $A$, so
$\Delta(C_0)\le(nL)^{O(n)}$ and $e_0=O(n)$. Substituting
$s\le n$ and $g\le\nu_+$ into the recurrence gives
\begin{equation}\label{eq:global-bound}
 \Delta_{\max}\le(nL)^{O(n2^{\nu_+})}.
\end{equation}

Next, we bound the branching factor. At a constraint step the algorithm branches over which row
$a_j \notin \rowsp(C)$ to add (at most $m$ choices) and which value
$b_j' \in \{b_j - nL_A\Delta, \ldots, b_j\}$ to assign (at most
$nL_A\Delta + 1$ choices), giving a branching factor of at most
$m \cdot (nL_A\Delta + 1) \le (nL)^{O(n)} \cdot \Delta$, since
$m \le (2L_A+1)^n = (nL)^{O(n)}$.  For each eligible basis vector, a gradient step creates at most
$2\,y_i^T Qy_i + 1 \le 2n^2 L_Q \Delta^2 + 1 =
(nL)^{O(1)} \cdot \Delta^2$.  There are at most $n$ eligible basis vectors. Therefore the total number of children
at any node is at most $(nL)^{O(n)} \cdot \Delta^2$.
Substituting~\eqref{eq:global-bound}, the branching factor at every
node is at most $(nL)^{O(n \cdot 2^{\nu_+})}$.

Since the tree has depth at most~$n$ and branching factor is at most
$(nL)^{O(n\cdot 2^{\nu_+})}$ at each node, the total number of
leaves is at most
$\bigl[(nL)^{O(n\cdot 2^{\nu_+})}\bigr]^n = (nL)^{O(n^2\cdot 2^{\nu_+})}$.
Each leaf is a checked point or an ILP, solved by fixed-dimension integer linear programming~\cite{Kannan87} in
$n^{O(n)} \cdot \mathrm{poly}(\varphi)$ time, which is absorbed into
the tree-size exponent.  Node coefficients have bit length polynomial in $\varphi$ and
$n2^{\nu_+}\log(nL)$; their additional arithmetic cost is absorbed
in the same parametric factor. The overall running time is
$(nL)^{O(n^2\cdot 2^{\nu_+})} \cdot \mathrm{poly}(\varphi)$.
\end{proof}

\medskip
\noindent\textbf{Comparison with the original algorithm.}
Lokshtanov constructs integer nullspace vectors with norm at most
$\Delta(C)^2$; lattice generation is not required by his argument
\cite[Section~2]{Lokshtanov17}. Substituting that bound into
Lemma~\ref{lem:growth} gives cubing rather than squaring of
$\Delta$. For a variant with ILP leaves, the same tree analysis
then gives $(nL)^{O(n^2 3^n)}\mathrm{poly}(\varphi)$.
This is not a bound for the literal neighborhood-enumeration step
in the original presentation. There the flat alternative is part
of an existential three-case lemma, and the algorithm also searches
integer points within $\ell_1$ distance $n\Delta^2$ of a particular
affine solution. Bounding this search by its containing coordinate
box gives at most $(2n\Delta^2+2)^n$ candidates. This refined count
also fits $(nL)^{O(n^2 3^n)}\mathrm{poly}(\varphi)$ once
$\Delta\le(nL)^{O(n3^n)}$ is substituted. The larger enumeration
bound used in the original FPT analysis is unnecessary for an
explicit comparison. In either implementation, the determinant
recurrence is the source of the double-exponential dependence.

\bibliographystyle{plainnat}
\bibliography{references}

\end{document}